\newtheorem{defin}{Definition}
\newtheorem{prop}{Proposition}
\newtheorem{theorem}{Theorem}
\newtheorem{lemma}{Lemma}
\newtheorem{cor}{Corollary}
\newtheorem{conj}{Conjecture}
\newcommand{\p}[1]{\left(#1\right)}
\newcommand{\st}[1]{\left\{#1\right\}}
\newcommand{\ab}[1]{\left|#1\right|}
\newcommand{\bk}[1]{\left[#1\right]}
\newcommand{\fl}[1]{\left\lfloor#1\right\rfloor}
\newcommand{\ceil}[1]{\left\lceil#1\right\rceil}
\newcommand{\quot}[1]{``#1''}
\newcommand{\limf}[3]{\lim_{#1\rightarrow#2}{#3}}
\newcommand{\limi}[2]{\limf{#1}{\infty}{#2}}
\newcommand{\eps}{\varepsilon}
\newcommand{\lb}{\hspace*{\fill}}
\DeclareMathOperator{\modd}{mod}
\DeclareMathOperator{\mex}{mex}
\newcommand{\fwd}{\p{\Longrightarrow}}
\newcommand{\bwd}{\p{\Longleftarrow}}
\renewcommand{\iff}{\Leftrightarrow}
\newcommand{\iffpf}[2]{
\begin{description}
\item[$\fwd$] #1
\item[$\bwd$] #2
\end{description}
}
\newcommand{\namehead}[3]{
\lstset{breaklines=true, morecomment=[l]{//}, frame=single, showstringspaces=false, numbers=left}
\begin{flushright}
Nathan Fox\\
#2\\
#3\\
\end{flushright}
\ifstrequal{#1}{.}{}{
\begin{center}
{\Large Homework #1}
\end{center}}
}
\newcommand{\seq}{\p}
\newcommand{\INVARIANT}{\STATE\hspace{-0.155in}\textbf{Invariant:} }
\DeclareMathOperator{\per}{per}
\DeclareMathOperator{\pref}{pref}
\DeclareMathOperator{\SG}{SG}
\DeclareMathOperator{\ind}{ind}
\begin{document}
%
%
\title{On Aperiodic Subtraction Games with Bounded Nim Sequence}
\author{Nathan Fox\footnote{Department of Mathematics, Rutgers University, Piscataway, New Jersey,
\texttt{fox@math.rutgers.edu}
}}
\date{}

\maketitle

\begin{abstract}
Subtraction games are a class of impartial combinatorial games whose positions correspond to nonnegative integers and whose moves correspond to subtracting one of a fixed set of numbers from the current position.  Though they are easy to define, subtraction games have proven difficult to analyze.  In particular, few general results about their Sprague-Grundy values are known.  In this paper, we construct an example of a subtraction game whose sequence of Sprague-Grundy values is ternary and aperiodic, and we develop a theory that might lead to a generalization of our construction.
\end{abstract}

\section{Introduction}
We begin with the following definitions:
\begin{defin}\cite{bcg}
An \emph{impartial combinatorial game} is a game played by two players satisfying the following conditions:
\begin{itemize}
\item Both players have perfect information about the position
\item The players take turns making moves from the current position
\item The available moves do not depend on which player is to move
\item The first player to be unable to move loses
\item The game must end with one player losing in a finite number of moves
\end{itemize}
\end{defin}
We will also need the following definition:
\begin{defin}\cite{bcg}
The \emph{Sprague-Grundy function} of a position in an impartial combinatorial game is defined to be the minimal excluded nonnegative integer ($\mex$) of the Sprague-Grundy function values of all positions reachable from the current position.  (In particular, the Sprague-Grundy value of a position with no legal moves is zero.)
\end{defin}
The Sprague-Grundy function gives important information about positions in an impartial combinatorial games.  In particular, the player to move can force a win if and only if the Sprague-Grundy function of the current position is nonzero.  Such positions are called $N$-positions ($N$ for \quot{Next,} because the next player wins); positions with zero Sprague-Grundy function are called $P$-positions ($P$ for \quot{Previous,} because the previous player wins)~\cite{bcg}.

We now introduce subtraction games.
\begin{defin}\cite{bcg}
Let $S$ be a set (finite or infinite) of positive integers.  The \emph{subtraction game with subtraction set $S$} is the 
impartial combinatorial game with the following additional properties:
\begin{itemize}
\item The positions are enumerated by the nonnegative integers
\item In position $i$, a move is legal if and only if it is to position $i-s$ for some $s\in S$ with $s\leq i$
\end{itemize}
\end{defin}
Notice that a subtraction game is, in fact, an impartial combinatorial game, since the index of the position strictly decreases as the game progresses, and it is bounded below by $0$.  We will typically think of position $i$ in a subtraction game as a pile of $i$ counters, where a legal move is to take some element $s\in S$ counters from that pile, leaving $i-s$ counters for the other player.

We will focus mainly on the Sprague-Grundy values of subtraction games.  Since the subtraction set uniquely determines the subtraction game and vice versa, the Sprague-Grundy function can just as easily be thought of as a property of the subtraction set $S$.  Hence, we will use the notation $\SG_S$ to refer to the Sprague-Grundy function of the subtraction game with subtraction set $S$.  If the set $S$ is unambiguous, we will frequently drop the subscript and just write $\SG$.  The focus of this paper is on the sequence of Sprague-Grundy values, which is captured by the following definition.
\begin{defin}
Let $G$ be a subtraction game with subtraction set $S$.  The \emph{Sprague-Grundy sequence}, or \emph{Nim sequence}, of $G$ is the sequence $\seq{a_n}_{n\geq0}$ where $a_n=\SG_S\p{n}$.
\end{defin}
Like the Sprague-Grundy function, the Nim sequence can just as easily be thought of as a property of the subtraction set $S$.  Throughout this paper, we will use the terminology \quot{Nim sequence of a game} and \quot{Nim sequence of a set} interchangeably.

The best-known subtraction game is one-pile Nim.  In this game, $S$ is the set of natural numbers, and the Nim sequence is $a_n=n$ for all $n$.  Another common example of a subtraction game is colloquially known as \quot{twenty-one}.  In this game $S=\st{1,2,3}$ (and the typical starting position is with $21$ counters).  The Nim squence of $S$ in this case is $a_n=n\modd 4$ for all $n$.

In this paper, we pursue a subtraction game whose Nim sequence is bounded and not eventually periodic.  Clearly, if the Nim sequence is not bounded (as in Nim), then it is not eventually periodic.  But, many bounded Nim sequences are known to be eventually periodic (as in twenty-one).  In 2011, Fraenkel published a paper entitled \quot{Aperiodic Subtraction Games}, in which he proves that a certain impartial combinatorial game has an aperiodic, bounded Nim sequence.  His definition of subtraction game is different from ours, though, in that he also allows moves that reduce the value of the position by a multiplicative factor~\cite{fraenkel}.  To our knowledge, nobody has previously published an example of a subtraction game with aperiodic, bounded Nim sequence using our definition of subtraction game.

In Section~\ref{s:prelim}, we introduce some notation and concepts that we will use in our pursuit of an aperiodic, bounded Nim sequence for a subtraction game.  In particular, we will make use of some definitions and results on words (\ref{ss:words}), Beatty sequences (\ref{ss:beatty}), and digital representations (\ref{ss:digrep}).  In Section~\ref{s:known}, we introduce some results that are already known about Nim sequences of subtraction games, and we frame these results in the context of our problem.  In Section~\ref{s:traceable}, we define and explore the properties of the key tool that will allow us to solve the problem in a general context: representation words.  Then, in Section~\ref{s:apbd} we use representation words to find an example of a subtraction set with a bounded, aperiodic Nim sequence.  Finally, in Section~\ref{s:open}, we state some problems that remain open.

A Maple package implementing all of the algorithms in this paper, as well as some other tidbits, can be found at \texttt{http://math.rutgers.edu/$\sim$nhf12/subgames.txt}.

\section{Preliminaries}\label{s:prelim}
\subsection{Notation}\label{ss:notation}
In this paper, we will use some fairly standard notation for certain operations on sets of numbers.  First of all, for a set $A$, we will use $\ab{A}$ to denote the cardinality of $A$. Also, we have the following definition.
\begin{defin}
Let $A$ and $B$ be sets of numbers, let $n$ be a number, and let $f$ be a function that takes numerical arguments.  Then $A+B=\st{a+b\mid a\in A, b\in B}$, $A-B=\st{a-b\mid a\in A, b\in B}$, $A+n=\st{a+n\mid a\in A}$, $A-n=\st{a-n\mid a\in A}$, $n-A=\st{n-a\mid a\in A}$, $nA=\st{na\mid a\in A}$, and $f\p{A}=\st{f\p{a}\mid a\in A}$.
\end{defin}
We will also use the standard notation $\bk{n}$ to refer to the set $\st{1,2,\ldots,n}$.
\subsection{Combinatorics on Words}\label{ss:words}
Let $A$ be a set.  A \emph{word} over alphabet $A$ is a sequence of elements of $A$.  We allow words to be either finite or right-infinite (extending forever to the right but not the left).  We will use $\eps$ to denote the empty word.  For an alphabet $A$, $A^*$ denotes the set of finite words over $A$, and $A^\omega$ denotes the set of infinite words over $A$.  For a word $w$ and nonnegative integers $a<b$, $w\bk{a}$ denotes the $a^{th}$ symbol in $w$ (indexing from $0$), and $w\left[a..b\right)$ denotes the sequence of symbols in $w$ starting at position $a$ (inclusive) and ending at position $b$ (exclusive).

For words $u$ and $v$, we write $uv$ for the result of concatenating $u$ with $v$.  For a nonnegative integer $n$, $u^n$ is the result of concatenating $u$ with itself $n$ times (where $u^0=\eps$).  Also, $u^\omega$ denotes the result of concatenating $u$ with itself infinitely many times.  We will use product notation ($\prod$) to denote concatenation of an indexed family of words.

We will frequently think of Nim sequences as words, and we will use sequence and word notation interchangeably.  We will let $\Sigma$ denote the (infinite) alphabet of nonnegative integers.  Also, for nonnegative integers $k$, we will let $\Sigma_k$ denote the set $\st{0,1,\ldots,k}$.

We will occasionally wish to consider maps between sets of words known as \emph{morphisms}.  Given alphabets $A$ and $B$, a morphism is a map $\varphi:A^*\to B^*$ such that for all $u,v\in A^*$, $\varphi\p{uv}=\varphi\p{u}\varphi\p{v}$.  In particular, $\varphi\p{\eps}=\eps$, and for nonempty words $w$, $\varphi\p{w}$ is completely determined by $\varphi\p{a}$ for all $a\in A$.  If $A=B$, then we can iterate a morphism $\varphi:A^*\to A^*$.  Given $w\in A^*$, $\varphi^m\p{w}$ denotes the result of iterating $\varphi$ on input $w$ $m$ times, and
\[
\varphi^\omega\p{w}=\limi{m}\varphi^m\p{w}.
\]
The word $\varphi^m\p{w}$, if it exists, is known as the \emph{fixed point of $\varphi$ at $w$}.  Since the fixed point will typically be an infinite word, we will usually only consider fixed points at words of length $1$, i.e. at symbols of $A$.
\subsection{Beatty Sequences and Sturmian Words}\label{ss:beatty}
In order to prove that our main construction actually works, we will need to use some properties of Beatty sequences.
\begin{defin}\cite{rs}
Let $\alpha\in\mathbb{R}$ be irrational.  Define a sequence $b_n=\fl{n\alpha}$.  The sequence $\st{b_n}$ is called the \emph{Beatty sequence generated by $\alpha$}.
\end{defin}
Henceforth, $\alpha$ will refer to an arbitrary irrational number.  Here are some known facts about Beatty sequences.
\begin{prop}\label{prop:beattycomp}\cite{rs}
Let $n$ be a positive integer, and let $\alpha>1$.  Either $n$ is in the Beatty sequence generated by $\alpha$ or $n$ is in the Beatty sequence generated by $\p{1-\frac{1}{\alpha}}^{-1}$. 
\end{prop}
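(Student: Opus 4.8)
The plan is to exploit the classical Beatty--Rayleigh theorem, which states that if $\alpha,\beta>1$ are irrational with $\frac{1}{\alpha}+\frac{1}{\beta}=1$, then the Beatty sequences generated by $\alpha$ and $\beta$ partition the positive integers; in particular every positive integer lies in (at least) one of them. So the task reduces to checking that the hypotheses apply. First I would set $\beta=\p{1-\frac{1}{\alpha}}^{-1}$ and verify the algebraic identity $\frac{1}{\alpha}+\frac{1}{\beta}=1$, which is immediate since $\frac{1}{\beta}=1-\frac{1}{\alpha}$. Next I would check that $\beta>1$: since $\alpha>1$ we have $0<\frac{1}{\alpha}<1$, hence $0<1-\frac{1}{\alpha}<1$, and taking reciprocals gives $\beta>1$. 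I also need $\beta$ irrational; if $\beta$ were rational then $\frac{1}{\alpha}=1-\frac{1}{\beta}$ would be rational, forcing $\alpha$ rational, contradicting our standing assumption on $\alpha$.

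With both generators confirmed to be irrational numbers exceeding $1$ and satisfying the reciprocal relation, the statement is exactly the existence half of the Beatty--Rayleigh partition. Since the paper cites \cite{rs} as the reference for Beatty sequences, the cleanest route is simply to invoke that theorem. If instead a self-contained argument is wanted, I would argue by a counting/density comparison: for a positive integer $n$, the number of elements of the Beatty sequence of $\alpha$ that are $\le n$ is $\fl{n/\alpha}$ (using that $\fl{k\alpha}\le n \iff k \le n/\alpha$ together with irrationality to avoid boundary cases), and similarly $\fl{n/\beta}$ for the $\beta$-sequence; one then shows $\fl{n/\alpha}+\fl{n/\beta}=n$ by writing $n/\alpha+n/\beta=n$ and noting the two fractional parts are positive and sum to $1$, so the floors sum to $n-1+1-1 = n-1$... more carefully, $\fl{x}+\fl{n-x}=n-1$ when $x\notin\mathbb Z$, so the two sequences together have exactly $n$ elements among $\{1,\dots,n\}$ with no value exceeding $n$; an easy induction on $n$ then shows every integer in $[1,n]$ is hit. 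In either case the conclusion "either $n$ is in the $\alpha$-sequence or in the $\beta$-sequence" follows.

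The only mild subtlety — and the step I'd be most careful about — is the handling of the floor-function boundary behavior: the clean counting identity $\fl{x}+\fl{n-x}=n-1$ requires $x=n/\alpha\notin\mathbb Z$, which is where irrationality of $\alpha$ is genuinely used. Everything else is routine. Since the proposition only asserts the (non-exclusive) "or," I don't even need the disjointness half of Beatty--Rayleigh, so I would not belabor that point; a one-line appeal to \cite{rs} is likely the intended proof.
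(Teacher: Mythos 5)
The paper offers no proof of this proposition at all---it is stated as a known fact with the citation to \cite{rs}---and your primary route (check that $\beta=\p{1-\frac{1}{\alpha}}^{-1}$ is irrational and greater than $1$ and that $\frac{1}{\alpha}+\frac{1}{\beta}=1$, then invoke the classical Beatty--Rayleigh partition theorem) is exactly the intended argument. One caution on your optional self-contained counting sketch: the number of $k\geq1$ with $\fl{k\alpha}\leq n$ is $\fl{\p{n+1}/\alpha}$, not $\fl{n/\alpha}$ (your formula counts the terms strictly \emph{below} $n$), so the clean identity is $\fl{\p{n+1}/\alpha}+\fl{\p{n+1}/\beta}=n$; as written your two counts total $n-1$, which is inconsistent with your subsequent claim that the sequences contribute $n$ elements of $\st{1,\ldots,n}$, so that paragraph would need the off-by-one repaired before it stands on its own.
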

We have the following:
\begin{prop}\label{prop:beattyforbdiff}
Let $m$ and $n$ be integers (positive, negative, or zero) in the Beatty sequence generated by $\alpha$.  Then there exists an integer $\ell$ such that $m-n=\fl{\ell\alpha}$ or $m-n=\ceil{\ell\alpha}$.
\end{prop}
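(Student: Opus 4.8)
The plan is to realize the difference as a Beatty value (up to rounding in either direction) for the shifted index. Since $m$ and $n$ lie in the Beatty sequence, fix integers $i$ and $j$ (possibly negative or zero) with $m=\fl{i\alpha}$ and $n=\fl{j\alpha}$, and set $\ell=i-j$. Using the identity $\fl{x}=x-\p{x-\fl{x}}$, write
\[
m-n=\fl{i\alpha}-\fl{j\alpha}=\p{i-j}\alpha-\p{i\alpha-\fl{i\alpha}}+\p{j\alpha-\fl{j\alpha}}=\ell\alpha+\delta,
\]
where $\delta=\p{j\alpha-\fl{j\alpha}}-\p{i\alpha-\fl{i\alpha}}$ is a difference of two numbers in $[0,1)$, hence $\delta\in\p{-1,1}$.

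The key observation is that $m-n$ is an integer, while $\ell\alpha$ is irrational whenever $\ell\neq0$, so $m-n\neq\ell\alpha$ and $m-n$ lies strictly between $\ell\alpha-1$ and $\ell\alpha+1$. Now I would split on the sign of $\delta$. If $\delta>0$ then $\ell\alpha<m-n<\ell\alpha+1$, and since $\fl{\ell\alpha}$ and $\ceil{\ell\alpha}$ are the only integers in $\p{\ell\alpha-1,\ell\alpha+1}$ and they are distinct (by irrationality of $\ell\alpha$), we get $m-n=\ceil{\ell\alpha}$; symmetrically, $\delta<0$ forces $m-n=\fl{\ell\alpha}$. Finally, if $\delta=0$ then $i\alpha-\fl{i\alpha}=j\alpha-\fl{j\alpha}$, so $\p{i-j}\alpha\in\mathbb{Z}$; by irrationality of $\alpha$ this forces $\ell=i-j=0$, and then $m-n=0=\fl{0\cdot\alpha}$, so $\ell=0$ works.

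This is essentially the whole argument, and I do not expect a genuine obstacle. The only points needing care are the degenerate case $\delta=0$ — which cannot occur together with $\ell\neq0$ precisely because $\alpha$ is irrational — and the routine check that the two alternatives $\fl{\ell\alpha}$ and $\ceil{\ell\alpha}$ really exhaust the possibilities for an integer within distance $1$ of the irrational number $\ell\alpha$. Note that, unlike Proposition~\ref{prop:beattycomp}, this result requires no hypothesis on the size of $\alpha$: it holds for every irrational $\alpha$.
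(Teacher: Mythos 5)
Your proof is correct and takes essentially the same route as the paper's: both write $m-n=\ell\alpha+\delta$ where $\delta$ is a difference of two fractional parts lying in $\p{-1,1}$, and then split on the sign of $\delta$ to decide between $\fl{\ell\alpha}$ and $\ceil{\ell\alpha}$. If anything, your version is slightly more careful, since the paper does not explicitly treat the degenerate case $\delta=0$ (equivalently $\ell=0$, $m=n$), which you dispose of correctly via the irrationality of $\alpha$.
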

\begin{proof}
Since $m$ and $n$ are in the Beatty sequence generated by $\alpha$, there exist integers $a$ and $b$ such that $m=\fl{a\alpha}$ and $n=\fl{b\alpha}$.  So, there exist $r,s\in\p{0,1}$ such that $m=a\alpha-r$ and $n=b\alpha-s$.  So,
\[
m-n=\p{a\alpha-r}-\p{b\alpha-s}=\p{a-b}\alpha-\p{r-s}.
\]
Let $\ell=a-b$.  Notice that if $r>s$, then $m-n=\fl{\ell\alpha}$, since $r-s$ equals the fractional part of $\ell\alpha$.  If $r<s$, then $m-n=\ceil{\ell\alpha}$, since $r-s$ equals minus the fractional part of $1-\ell$.  These are all the possible cases, as required.
\end{proof}

The specific Beatty sequences we will work with are those generated by $\phi$ and $\phi^2$, where $\phi=\frac{1+\sqrt{5}}{2}$ is the golden ratio.  Since $\phi^2=\p{1-\frac{1}{\phi}}^{-1}$, it follows from Proposition~\ref{prop:beattycomp} that these two sequences partition the positive integers.  We have the following definition.
\begin{defin}\label{def:wyt}\cite{rs}, \cite{oeis201}, \cite{oeis1950}
The Beatty sequence generated by $\phi$ is known as the \emph{Lower Wythoff Sequence} (A000201 in OEIS).  The Beatty sequence generated by $\phi^2$ is known as the \emph{Upper Wythoff Sequence} (A001950 in OEIS).  We will denote the set of Lower Wythoff Numbers (starting from $1$) by $W_L$, and we will denote the set of Upper Wythoff Numbers (starting from $0$) by $W_U$.
\end{defin}
By our previous assertion, $W_L$ and $W_U$ are disjoint, and their union is all of the nonnegative integers.  (Note that our inclusion of $0$ in $W_U$ is nonstandard.  We do this because our construction later will treat $0$ like a member of the Upper Wythoff Numbers.)

While on the topic of Beatty sequences, we will define what it means for a word to be Sturmian.  There are many equivalent definitions for this; we choose the one most closely tied to Beatty sequences.
\begin{defin}\label{def:sturm}\cite{bal}
A binary word $w$ is Sturmian if there exist an irrational number $\alpha\in\p{0,1}$ and a real number $\beta\in\left[0,1\right)$ such that $w\bk{n}=\fl{\p{n+1}\alpha+\beta}-\fl{n\alpha+\beta}$ for all $n\geq0$ or $w\bk{n}=\ceil{\p{n+1}\alpha+\beta}-\ceil{n\alpha+\beta}$ for all $n\geq0$.  A general word $w$ over a two-letter alphabet $A$ is Sturmian if there exists a bijection $\varphi:\st{a,b}\to\st{0,1}$ such that $\varphi\p{w}$ is Sturmian.
\end{defin}
This definition gives rise to the following fact.
\begin{prop}\label{prop:beattysturm}\cite{cass}
The sequence of consecutive differences of the Beatty sequence generated by $\alpha$ is a Sturmian word on alphabet $\st{\fl{\alpha},\ceil{\alpha}}$.
\end{prop}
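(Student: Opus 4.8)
The plan is to show directly that the sequence of consecutive differences $d_n = \fl{(n+1)\alpha} - \fl{n\alpha}$ matches the form required by Definition~\ref{def:sturm}, after reducing to the case $\alpha \in (0,1)$ by subtracting off the integer part. Write $\alpha = \fl{\alpha} + \alpha'$ with $\alpha' \in (0,1)$ irrational. Then $\fl{n\alpha} = n\fl{\alpha} + \fl{n\alpha'}$, so $d_n = \fl{\alpha} + \p{\fl{(n+1)\alpha'} - \fl{n\alpha'}}$. The inner difference takes values in $\st{0,1}$ (since $\alpha' \in (0,1)$ forces consecutive floors to increase by $0$ or $1$), so $d_n \in \st{\fl{\alpha}, \fl{\alpha}+1} = \st{\fl{\alpha}, \ceil{\alpha}}$ (using that $\alpha$ is irrational, so $\ceil{\alpha} = \fl{\alpha}+1$). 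This already pins down the alphabet.

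Next I would exhibit the Sturmian structure. Let $e_n = \fl{(n+1)\alpha'} - \fl{n\alpha'}$, so $d_n = \fl{\alpha} + e_n$. I claim $\p{e_n}_{n\ge 0}$ is a Sturmian binary word in the sense of Definition~\ref{def:sturm}: take the irrational slope to be $\alpha' \in (0,1)$ and the offset $\beta = 0 \in [0,1)$, giving exactly $e_n = \fl{(n+1)\alpha' + \beta} - \fl{n\alpha' + \beta}$ for all $n \ge 0$. Thus $\p{e_n}$ is Sturmian over $\st{0,1}$. To finish, I need the word $\p{d_n}$ over $\st{\fl{\alpha}, \ceil{\alpha}}$ to be Sturmian; by the second sentence of Definition~\ref{def:sturm} it suffices to produce a bijection $\varphi: \st{\fl{\alpha},\ceil{\alpha}} \to \st{0,1}$ with $\varphi\p{d_n} = e_n$ for all $n$. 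The map $\varphi(x) = x - \fl{\alpha}$ does this, sending $\fl{\alpha} \mapsto 0$ and $\ceil{\alpha} = \fl{\alpha}+1 \mapsto 1$, and indeed $\varphi\p{d_n} = d_n - \fl{\alpha} = e_n$.

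The one genuinely delicate point is the reduction $\fl{n\alpha} = n\fl{\alpha} + \fl{n\alpha'}$ and, more importantly, verifying that nothing in Definition~\ref{def:sturm} is violated by this choice of parameters — in particular that $\alpha' \ne 0$ and $\alpha'$ is irrational (both immediate from $\alpha$ irrational with the given integer part), and that $\beta = 0$ is permitted (it is, since the definition allows $\beta \in [0,1)$). I expect the main obstacle, such as it is, to be bookkeeping: making sure the edge behavior at $n=0$ is consistent (here $e_0 = \fl{\alpha'} = 0$, which is fine) and that the alphabet identification $\st{\fl{\alpha}, \ceil{\alpha}}$ has exactly two elements, which again uses irrationality of $\alpha$ so that $\alpha$ is not itself an integer. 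Everything else is a direct unwinding of definitions, and no appeal to Propositions~\ref{prop:beattycomp}--\ref{prop:beattyforbdiff} is needed.
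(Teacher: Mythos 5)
Your proof is correct. Note that the paper does not actually prove Proposition~\ref{prop:beattysturm} --- it is imported from the literature (\cite{cass}) without argument --- so there is no in-paper proof to compare against; your write-up supplies a valid self-contained verification. The reduction $\fl{n\alpha}=n\fl{\alpha}+\fl{n\alpha'}$ with $\alpha'=\alpha-\fl{\alpha}\in\p{0,1}$ irrational is sound, the resulting inner word $e_n=\fl{\p{n+1}\alpha'}-\fl{n\alpha'}$ matches Definition~\ref{def:sturm} verbatim with slope $\alpha'$ and $\beta=0$, and the shift $\varphi\p{x}=x-\fl{\alpha}$ is exactly the bijection the second sentence of that definition asks for (with $\ceil{\alpha}=\fl{\alpha}+1$ by irrationality). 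The only cosmetic caveat is an indexing one: if the Beatty sequence is taken to start at $n=1$ rather than $n=0$, the difference word becomes $\fl{\p{n+1}\alpha'+\beta}-\fl{n\alpha'+\beta}$ with $\beta=\alpha'\in\left[0,1\right)$, which is still admissible under Definition~\ref{def:sturm}, so nothing breaks.
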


We conclude this subsection with a brief discussion of Sturmian morphisms.
\begin{defin}\label{def:sturmmorph}
A morphism $\varphi:\st{0,1}^*\to\st{0,1}^*$ is Sturmian if $\varphi^\omega\p{0}$ exists and is Sturmian.
\end{defin}
There are a number of characterizations of Sturmian morphisms~\cite{BeSe},~\cite{rich}.  The one we will use later is the following:
\begin{prop}\label{prop:sturmmorph}\cite{rich}
A morphism $\varphi:\st{0,1}^*\to\st{0,1}^*$ is Sturmian if and only if it has an infinite fixed point at $0$ and it can be written as a composition of the morphism $\varphi_1$, $\varphi_2$, $\varphi_3$, and $\varphi_4$ defined as follows:
\begin{itemize}
\item$\varphi_1\p{0}=0$ and $\varphi_1\p{1}=01$
\item$\varphi_2\p{0}=01$ and $\varphi_2\p{1}=0$
\item$\varphi_3\p{0}=0$ and $\varphi_3\p{1}=10$
\item$\varphi_4\p{0}=01$ and $\varphi_4\p{1}=1$
\end{itemize}
\end{prop}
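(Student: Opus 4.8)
\section*{Proof proposal}

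The plan is to prove the two implications separately, using two standard facts. First, a binary word is Sturmian if and only if it is balanced (for any two factors of the same length, the numbers of $1$'s in them differ by at most one) and not eventually periodic. Second, each of $\varphi_1,\varphi_2,\varphi_3,\varphi_4$ maps every Sturmian word to a Sturmian word; call such a morphism \emph{Sturmian-preserving}. The second fact is verified by a direct computation from Definition~\ref{def:sturm} (equivalently, by checking that the image of a balanced word is balanced).

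For the \quot{if} direction, assume $\varphi=\varphi_{i_1}\circ\cdots\circ\varphi_{i_k}$ is prolongable at $0$, so that $w=\varphi^\omega\p{0}$ is an infinite word; I must show $w$ is Sturmian. Being a composition of Sturmian-preserving morphisms, $\varphi$ is itself Sturmian-preserving. Fix any Sturmian word $c$ beginning with $0$---for instance the Fibonacci word $\varphi_2^\omega\p{0}$, which up to relabeling is the difference sequence of the Lower Wythoff Sequence and hence is Sturmian by Proposition~\ref{prop:beattysturm}. Since morphisms respect prefixes and $c$ begins with $0$, the word $\varphi^m\p{0}$ is a prefix of the Sturmian---hence balanced---word $\varphi^m\p{c}$ for every $m$; letting $m\to\infty$ shows that every finite prefix of $w$, and therefore $w$ itself, is balanced. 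It remains to see that $w$ is not eventually periodic, and this is the delicate point: the conclusion fails for degenerate compositions, the prototype being $\varphi_4^\omega\p{0}=01^\omega$, which is balanced but eventually periodic. One therefore reads the statement as implicitly restricting to non-degenerate compositions---equivalently, to morphisms $\varphi$ whose incidence matrix is primitive---for which one checks that the fixed point cannot be eventually periodic, so that $w$ is Sturmian.

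For the \quot{only if} direction, assume $\varphi$ is Sturmian, so $w=\varphi^\omega\p{0}$ exists, is infinite, and is Sturmian. I would induct on $\ab{\varphi\p{0}}+\ab{\varphi\p{1}}$, at each step peeling off one generator by a desubstitution (recognizability) argument; the base case is the Fibonacci morphism $\varphi_2$, the shortest morphism having an infinite Sturmian fixed point at $0$. For the inductive step, use balance: exactly one of the two-letter words $00$, $11$ occurs as a factor of $w$ (not both, by balance; at least one, since $w$ is aperiodic), and since $w$ begins with $0$ this determines a unique parsing of $w$---and, one verifies, of $\varphi\p{0}$ and $\varphi\p{1}$ as well---over a two-block code that is exactly the image set of one of $\varphi_1,\varphi_2,\varphi_3,\varphi_4$, say $\varphi_j$. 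Writing $\varphi\p{0}=\varphi_j\p{u_0}$ and $\varphi\p{1}=\varphi_j\p{u_1}$ and setting $\psi\p{0}=u_0$, $\psi\p{1}=u_1$ yields $\varphi=\varphi_j\circ\psi$, where $\psi$ still has an infinite fixed point at $0$, is still Sturmian, and has strictly smaller images; applying the induction hypothesis to $\psi$ then expresses $\varphi$ as a composition of the $\varphi_j$.

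I expect the main obstacle to be this desubstitution step in the \quot{only if} direction: one must show that the unique block parsing of $w$ is compatible with $\varphi$---so that $\varphi\p{0}$ and $\varphi\p{1}$ genuinely decompose over the chosen two-block code and the factorization $\varphi=\varphi_j\circ\psi$ is well defined---and that $\psi$ inherits every hypothesis (Sturmian, infinite fixed point at $0$, strictly smaller size). This synchronization argument is the technical heart of the classical characterization of Sturmian morphisms, and it is here that the precise choice of the four generators $\varphi_1,\dots,\varphi_4$, rather than some other generating set, is forced by the run-length structure that a Sturmian word can exhibit.
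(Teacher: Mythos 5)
The paper does not prove this proposition at all: it is imported verbatim from Richomme~\cite{rich} as a known characterization, so there is no in-paper argument to compare yours against. Judged on its own terms, your write-up is an outline of the standard Berstel--S\'e\'ebold/Richomme proof rather than a proof: the two steps you yourself flag as delicate are essentially the entire content of the theorem, and neither is carried out. In the \quot{only if} direction, the desubstitution step --- showing that the unique balanced parsing of $w$ over a two-block code such as $\st{0,01}$ or $\st{0,10}$ is synchronized with $\varphi$, so that $\varphi\p{0}$ and $\varphi\p{1}$ individually decompose, the factorization $\varphi=\varphi_j\circ\psi$ is well defined, and $\psi$ inherits an infinite Sturmian fixed point at $0$ --- is asserted, not proved; without it the induction does not close. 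In the \quot{if} direction, the balance argument via prefixes of $\varphi^m\p{c}$ is sound granted that each generator preserves Sturmian words (a fact you defer to \quot{direct computation}), but the aperiodicity of the fixed point is simply not established, and you acknowledge as much.

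That said, your observation about $\varphi_4$ is correct and is a genuine criticism of the statement as transcribed: $\varphi_4^\omega\p{0}=01^\omega$ exists, is infinite, and is trivially a composition of the four generators, yet it is eventually periodic and hence not Sturmian under Definition~\ref{def:sturm}. So the \quot{if} direction is false as literally stated and requires an additional non-degeneracy hypothesis (excluding, at minimum, powers of $\varphi_4$; primitivity of the incidence matrix of $\varphi$ is the natural condition). This defect lies in the transcription of Richomme's result rather than in your argument, but a complete proof would have to state the corrected hypothesis explicitly and use it in the aperiodicity step.
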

\subsection{Digital Representations}\label{ss:digrep}
Digital representations, a well-studied topic~\cite{drmota}, will serve as a useful tool in our continued search for an aperiodic, bounded Nim sequence.  We have the following definition.
\begin{defin}\cite{drmota}
Let $\seq{a_i}_{i\geq0}$ be a strictly increasing sequence of positive integers with $a_0=1$.  Given a nonegative integer $n$ with $a_j\leq n<a_{j+1}$, for all $i<j$ choose a nonnegative integer $d_i$ such that
\[
n=\sum_{i=0}^j d_ia_i.
\]
If the sum
\[
\sum_{i=0}^j d_i
\]
is taken as small as possible, then we call the sequence $d_jd_{j-1}\cdots d_0$ the \emph{$\seq{a_i}$-expansion or $\seq{a_i}$-representation of $n$}.
\end{defin}
This representation is unique.  Given $n$ with $a_j\leq n<a_{j+1}$,
the $\seq{a_i}$-expansion of $n$ can be obtained greedily: The first digit will be $\fl{\frac{n}{a_j}}$, then the remaining digits will be the $\seq{a_i}$-expansion of $\p{n\modd a_j}$~\cite{drmota}.

The most common example of a digital representation is the base $b$ representation for integers $b\geq2$.  In this case, $a_i=b^i$.  Another well-known example is the Zeckendorf representation, where $a_i=F\p{i+2}$ (where $F\p{m}$ denotes the $m^{th}$ Fibonacci number).  For example, the Zeckendorf representation of $19$ is $101001$, since $19=13+5+1$.

Two important properties of the Zeckendorf representation of any $n$ are that all digits are $0$ or $1$ and that no two ones can be consecutive.  The converse is also true: any bit string satisfying these two conditions is a valid Zeckendorf representation~\cite{zeck}.

We will find these next definitions useful when dealing with digital representations.
\begin{defin}
Let $\seq{a_i}_{i\geq0}$ be a strictly increasing sequence of positive integers with $a_0=1$.  We call any such sequence a \emph{representing sequence}.
\end{defin}
\begin{defin}
Let $\seq{a_i}_{i\geq0}$ be a representing sequence, and let $n$ be a positive integer.  Let $j$ be such that $a_j\leq n<a_{j+1}$.  We say that $j$ is the $\emph{index}$ of $n$ with respect to $\seq{a_i}$.  If the sequence is unambiguous, we will use the notation $\ind\p{n}$ for the index of $n$.
\end{defin}
Notice that $\ind\p{n}$ is one less than the number of digits in the $\seq{a_i}$-representation of $n$.  So, the notion of index generalizes logarithms to arbitrary representations, since $\ind\p{n}=\fl{\log_b\p{n}}$ in the base $b$ representation.

We will now derive some useful results about digital representations.
\begin{prop}\label{prop:cutoff}
Let $\seq{a_i}$ be a representing sequence, and let $n$ be a positive integer.  Let $j=\ind\p{n}$.  Let $d_jd_{j-1}\cdots d_0$ be the $\seq{a_i}$-representation of $n$.  Then, $\p{d_j-1}d_{j-1}d_{j-2}\cdots d_0$ is the $\seq{a_i}$-representation of $n-a_j$.
\end{prop}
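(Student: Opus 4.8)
The plan is to read everything off the greedy description of the $\seq{a_i}$-representation recalled immediately after the definition. Since $a_j\leq n<a_{j+1}$, that description says the leading digit of $n$ is $d_j=\fl{n/a_j}$, which is at least $1$ because $n\geq a_j$, and the trailing block $d_{j-1}d_{j-2}\cdots d_0$ is (a left-zero-padding of) the $\seq{a_i}$-representation of $n\modd a_j$. Writing $r=n\modd a_j$, so that $0\leq r<a_j$, $n=d_ja_j+r$, and $\sum_{i=0}^{j-1}d_ia_i=r$, I get at once $n-a_j=(d_j-1)a_j+r$, so $(d_j-1)d_{j-1}\cdots d_0$ is at least a legal nonnegative digit string that sums (against the $a_i$) to $n-a_j$.

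To see that this string is in fact \emph{the} $\seq{a_i}$-representation of $n-a_j$, I would split on the value of $d_j$. If $d_j\geq 2$, then $a_j\leq(d_j-1)a_j\leq n-a_j<n<a_{j+1}$, so $\ind(n-a_j)=j$; running the greedy algorithm on $n-a_j$ produces leading digit $\fl{(n-a_j)/a_j}=d_j-1$ (using $0\leq r<a_j$) and then recurses on $(n-a_j)\modd a_j=r$, which is literally the same subproblem that produced $d_{j-1}\cdots d_0$ for $n$. Hence the two strings agree. If instead $d_j=1$, then $n-a_j=r<a_j$, so $\ind(n-a_j)<j$, and the greedy algorithm just returns the $\seq{a_i}$-representation of $r$; since the block $d_{j-1}\cdots d_0$ was already that representation of $r$ padded on the left with zeros, the claimed string $0\,d_{j-1}\cdots d_0$ is the correct one, the extra leading zero being harmless. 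Alternatively one can bypass the case split using uniqueness: if any digit string $e$ represents $n-a_j$, adding $1$ to its coefficient of $a_j$ gives a digit string for $n$ with digit sum one larger, so $e$ has digit sum at least $\bigl(\sum_{i\le j}d_i\bigr)-1$, which is exactly the digit sum of $(d_j-1)d_{j-1}\cdots d_0$; minimality together with uniqueness of the representation then closes the argument.

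The only real subtlety — and hence the step I would treat most carefully — is the degenerate case $d_j=1$, where the resulting string acquires a leading zero and $\ind(n-a_j)$ genuinely drops below $j$ (including the extreme case $n=a_j$, where $n-a_j=0$); everything else is a direct unwinding of the greedy algorithm. A secondary routine point is checking that the greedy recursions on $n$ and on $n-a_j$ are the same instance — both compute the $\seq{a_i}$-representation of $r=n\modd a_j$ — which is precisely what forces the low-order digits to coincide.
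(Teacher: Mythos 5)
Your proposal is correct and follows essentially the same route as the paper: both arguments unwind the greedy algorithm, observing that the leading digit of $n-a_j$ is $\fl{(n-a_j)/a_j}=d_j-1$ and that the trailing digits are the representation of $(n-a_j)\bmod a_j=n\bmod a_j$, hence unchanged. Your explicit treatment of the degenerate case $d_j=1$ (where $\ind(n-a_j)$ drops below $j$ and the leading digit becomes a harmless zero) is a point the paper's proof glosses over, so your version is if anything slightly more careful.
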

\begin{proof}
Recall the greedy algorithm for constructing digital representations.  This algorithm tells us that $d_j=\fl{\frac{n}{a_j}}$.  Letting $e_j$ denote the digit in the $a_j$ position in the $\seq{a_i}$-representation of $n-a_j$, we have similarly that $e_j=\fl{\frac{n-a_j}{a_j}}=d_j-1$, as required.  The greedy algorithm also tells us that $d_{j-1}d_{j-2}\cdots d_0$ is the $\seq{a_i}$-representation of $n\modd a_j$.  Since $\p{n-a_j}\modd a_j=n\modd a_j$ we see that these last $j$ digits will be the same in the representation of $n-a_j$, as required.
\end{proof}
\begin{prop}\label{prop:lexdig}
Let $\seq{a_i}$ be a representing sequence, and let $m$ and $n$ be positive integers.  If $m<n$, then the $\seq{a_i}$-representation of $m$ precedes that of $n$ lexicographically (when padding $m$ with zeroes so the representations are the same length).
\end{prop}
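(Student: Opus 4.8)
The plan is to argue by strong induction on $\ind(n)$, leaning on the greedy description of $\seq{a_i}$-representations recalled immediately before the statement. Fix $j=\ind(n)$, so $a_j\leq n<a_{j+1}$; since $m<n$ we have $\ind(m)\leq j$, and we pad the representation of $m$ with leading zeros up to length $j+1$. In the base case $j=0$ we have $m<n<a_1$, so both representations are single digits, namely $m$ and $n$ themselves, and $m<n$ finishes it. Note also that prepending equally many leading zeros to two representations does not change their lexicographic order, so it is harmless to over-pad whenever convenient.

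For the inductive step, suppose first that $\ind(m)<j$. Then the (padded) leading digit of $m$ is $0$, while the leading digit of $n$ is $\fl{n/a_j}\geq1$, so the representation of $m$ precedes that of $n$ already at the first symbol. Otherwise $\ind(m)=j$; writing $e_j=\fl{m/a_j}$ and $d_j=\fl{n/a_j}$, the inequality $m<n$ gives $e_j\leq d_j$. If $e_j<d_j$ we are again done at the first symbol. If $e_j=d_j$, the greedy algorithm says the remaining $j$ digits of $m$ and of $n$ are exactly the representations of $m\modd a_j$ and $n\modd a_j$, and $m\modd a_j=m-e_ja_j<n-d_ja_j=n\modd a_j$, with both values in $\st{0,1,\ldots,a_j-1}$ and hence of index at most $j-1$. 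The inductive hypothesis (or, if $m\modd a_j=0$, the trivial fact that an all-zero block lexicographically precedes any block containing a nonzero digit) shows that the last $j$ digits of the representation of $m$ precede those of $n$; since the leading digits agree, the same holds for the full representations.

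The only point requiring a little care is the bookkeeping around padding together with the degenerate possibility $m\modd a_j=0$, for which the displayed definition of digital representation (stated only for positive integers) does not literally apply; routing that case through the all-zeros observation keeps the induction clean. Everything else is a one-line manipulation of floor functions, so I expect no genuine obstacle here.
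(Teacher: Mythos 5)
Your proof is correct and follows essentially the same route as the paper's: both peel off the leading digit via the greedy description of the representation and reduce to comparing $m\bmod a_j$ with $n\bmod a_j$, the only difference being that you run a direct strong induction on the index while the paper phrases the same reduction as a minimal-counterexample argument. Your explicit handling of the padding and of the degenerate case $m\bmod a_j=0$ is a welcome bit of care that the paper glosses over.
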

\begin{proof}
Assume that $m<n$.  Let $j=\ind\p{m}$.  If $n\geq a_{j+1}$, then the representation of $n$ has a nonzero digit in a higher position than $m$, so the representation of $m$ precedes that of $n$ lexicographically.
So, we can assume without loss of generality that $a_j\leq m<n<a_{j+1}$.  Assume for a contradiction that the representation of $n$ precedes that of $m$ lexicographically. Furthermore, assume that $j$ is the minimal index where such a pair $\p{m,n}$ exists.  Let $c$ be the positive integer such that $ca_j\leq m<\p{c+1}a_j$.  This $c$ is the first digit in the representation of $m$.  The first digit in $n$ must be at least $c$, so it must equal $c$ by our order assumption.  So, we see that $m-ca_j$ and $n-ca_j$ are both less than $a_j$, and  the representation of $n-ca_j$ precedes that of $m-ca_j$ lexicographically, since those representations are the same as the representations of $m$ and $n$ with the first digit removed.  This is a contradiction, since $m-ca_j<n-ca_j$.  Therefore, the $\seq{a_i}$-representation of $m$ precedes that of $n$ lexicographically, as required.
\end{proof}
\begin{prop}\label{prop:maxdig}
Let $\seq{a_i}$ be a representing sequence.  For each fixed nonnegative integer $j$, let $d_j$ be the first digit in the $\seq{a_i}$-representation of $a_{j+1}-1$.  Then, for all nonnegative integers $n$, the digit in the $a_j$ place in the $\seq{a_i}$-representation of $n$ is at most $d_j$.
\end{prop}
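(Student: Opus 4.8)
The plan is to reduce, via the greedy construction of digital representations recalled above, the digit in the $a_j$ place of an arbitrary $n$ to the \emph{leading} digit of some integer whose index is exactly $j$, and then to compare that leading digit with $d_j$ directly.

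First I would record two easy preliminary facts. (i) Since $a_j<a_{j+1}$ we have $a_j\leq a_{j+1}-1<a_{j+1}$, so $a_{j+1}-1$ has index $j$; the greedy rule then gives $d_j=\fl{\frac{a_{j+1}-1}{a_j}}$, and in particular $d_j\geq1$. (ii) If $\ind\p{m}=k$, the greedy rule writes the $\seq{a_i}$-representation of $m$ as the single digit $\fl{\frac{m}{a_k}}$ followed by the (length-$k$, zero-padded) $\seq{a_i}$-representation of $m\modd a_k$; hence for every position $p<k$ the digit in the $a_p$ place of $m$ equals the digit in the $a_p$ place of $m\modd a_k$.

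Then I would induct on $\ind\p{n}$. If $n<a_j$ (equivalently $\ind\p{n}<j$, or $n=0$), the $\seq{a_i}$-representation of $n$ has no digit in position $j$, so that digit is $0\leq d_j$. If $\ind\p{n}=j$, the digit in the $a_j$ place is the leading digit $\fl{\frac{n}{a_j}}$, and since $n\leq a_{j+1}-1$ this is at most $\fl{\frac{a_{j+1}-1}{a_j}}=d_j$. If $\ind\p{n}=k>j$, apply fact (ii) with $p=j$: the digit in the $a_j$ place of $n$ equals that of $m=n\modd a_k$, which has index at most $k-1<\ind\p{n}$, so we finish by the inductive hypothesis applied to $m$ (which covers both the case $\ind\p{m}\geq j$ and the case $\ind\p{m}<j$).

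This is essentially bookkeeping, so I do not expect a genuine obstacle; the one place to be careful is the inductive step — making sure fact (ii) really is enough to conclude that peeling off the leading digit leaves the position-$j$ digit unchanged, and that the induction gracefully absorbs the possibility that the index of $n\modd a_k$ drops below $j$.
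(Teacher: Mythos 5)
Your proof is correct and follows essentially the same strategy as the paper's: peel off the leading digit via the greedy recursion until position $j$ becomes the leading position, then bound that leading digit by $d_j$. The only cosmetic differences are that you induct on $\ind\p{n}$ rather than on $n$ itself, and you bound the leading digit by the direct computation $\fl{n/a_j}\leq\fl{\p{a_{j+1}-1}/a_j}=d_j$ where the paper instead invokes its lexicographic-comparison result (Proposition~\ref{prop:lexdig}).
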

\begin{proof}
For each nonnegative integer $n$, we will say that $n$ has a \emph{violation} in position $j$ if the digit in the $a_j$ place in the $\seq{a_i}$-representation of $n$ exceeds $d_j$.  We will show, by induction on $n$, that there are no violations.  It is clear that $n=0$ has no violations.  Now, assume that no integer less than $n$ has a violation in any position.  Let $\ell=\ind\p{n}$.  As a consequence of Proposition~\ref{prop:lexdig}, there is not a violation in the $a_\ell$ position of $n$, since $n\leq a_{\ell+1}-1$, which has $d_\ell$ in that position.  Let $d$ be the digit in the $a_\ell$ position of $n$.  Then, the representation of $n-da_\ell$ is the same as that of $n$ but with the first digit removed.  We assumed that $n-da_\ell$ has no violations.  Therefore, $n$ has no violations, as required.
\end{proof}

Before we prove our key results about digital representations, we need the following definition.
\begin{defin}
Let $\seq{a_i}$ be a representing sequence, and let $n$ be a nonnegative integer.  For positive integer $m$, we say that $n$ is \emph{$m$-volatile} with repsect to $\seq{a_i}$ if the $\seq{a_i}$-representation of $n+1$ ends in (at least) $m$ zeroes.  (If $m=1$, we just say that $n$ is \emph{volatile}.)  If the $\seq{a_i}$-representation of $n+1$ does not end in $0$, we say that $n$ is \emph{non-volatile}.
\end{defin}
Volatility is a generalization of needing to carry when adding $1$.
We have the following facts about volatility.
\begin{prop}\label{prop:vprez}
Let $\seq{a_i}$ be a representing sequence, and let $n$ be a nonnegative integer.  Let $j=\ind\p{n}$, and let $m$ be a positive integer.  Then,
\begin{enumerate}
\item If $n-a_j$ is $m$-volatile, then $n$ is $m$-volatile.
\item If $n<a_{j+1}-1$ and $n$ is $m$-volatile, then $n-a_j$ is $m$-volatile.
\end{enumerate}
\end{prop}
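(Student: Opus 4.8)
The plan is to reduce everything to Proposition~\ref{prop:cutoff}, which says that subtracting $a_j$ from $n$ (where $j=\ind(n)$) merely decrements the leading digit of the $\seq{a_i}$-representation of $n$ and leaves its remaining $j$ digits untouched. I would first isolate an auxiliary claim: \emph{if $N$ is a positive integer with $k=\ind(N)$ and $N>a_k$, then the representations of $N$ and $N-a_k$ end in the same number of zeroes.} Granting this, both part~(2) and the non-boundary case of part~(1) follow at once: writing $j=\ind(n)$, one checks that $\ind(n+1)=j$ holds exactly when $n<a_{j+1}-1$, while $n+1>a_j$ always holds since $n\geq a_j$; so when $n<a_{j+1}-1$ the auxiliary claim applies with $N=n+1$ and $k=j$, giving that the representations of $n+1$ and of $(n+1)-a_j=(n-a_j)+1$ have the same number of trailing zeroes. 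Since $n$ is $m$-volatile precisely when the representation of $n+1$ ends in at least $m$ zeroes, and likewise $n-a_j$ is $m$-volatile precisely when $(n-a_j)+1$ does, we conclude that $n$ is $m$-volatile if and only if $n-a_j$ is $m$-volatile, for every $m$; this covers part~(2) and also part~(1) in the range $n<a_{j+1}-1$.

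To prove the auxiliary claim I would write the representation of $N$ as $e_ke_{k-1}\cdots e_0$ with $e_k\geq1$ and split on whether $e_k\geq2$ or $e_k=1$. If $e_k\geq2$, Proposition~\ref{prop:cutoff} gives $(e_k-1)e_{k-1}\cdots e_0$ as the representation of $N-a_k$; this string has the same length and a nonzero leading digit, so its trailing zeroes are governed by the block $e_{k-1}\cdots e_0$ exactly as those of $N$ are --- the only subtlety being the case $e_{k-1}=\cdots=e_0=0$, in which both representations end in exactly $k$ zeroes. If $e_k=1$, then by the greedy description the block $e_{k-1}\cdots e_0$ is itself the representation of $N\modd a_k=N-a_k$ after leading zeroes are stripped; this number is positive since $N>a_k$, and the same block controls the trailing zeroes of $N$, so the counts agree again (here $e_{k-1}=\cdots=e_0=0$ is impossible, as it would force $N=a_k$).

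Finally I would treat the boundary case of part~(1), $n=a_{j+1}-1$. Here $n+1=a_{j+1}$, whose minimal representation is $1\underbrace{0\cdots0}_{j+1}$: any representation of $a_{j+1}$ not using $a_{j+1}$ itself involves only $a_0,\dots,a_j$ and, the $a_i$ being strictly increasing, has digit sum at least $2$. So $n$ is $m$-volatile exactly for $m\leq j+1$. Meanwhile $(n-a_j)+1=a_{j+1}-a_j$ is a positive integer of index at most $j$, and a positive integer of index $\ell$ ends in at most $\ell$ zeroes (its leading digit is nonzero); hence if $n-a_j$ is $m$-volatile then $m\leq j<j+1$, so $n$ is $m$-volatile. (The converse fails here, which is precisely why part~(2) needs the hypothesis $n<a_{j+1}-1$.)

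I expect the main obstacle to be the bookkeeping around the leading-digit decrement: when the leading digit drops to $0$ the representation shrinks, and one must verify that this never creates or destroys trailing zeroes. This is exactly where the hypothesis $n\geq a_j$ (equivalently $N>a_k$ in the auxiliary claim) and the split on the size of $e_k$ do the work; once Proposition~\ref{prop:cutoff} is in hand, everything else is routine.
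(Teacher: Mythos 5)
Your proof is correct and follows essentially the same route as the paper's: both reduce the statement to Proposition~\ref{prop:cutoff} (the last digits of $n+1$ and $(n-a_j)+1$ coincide when $n+1<a_{j+1}$) and then dispose of the boundary case $n+1=a_{j+1}$ separately by noting that the representation of $a_{j+1}$ ends in $j+1$ zeroes while $(n-a_j)+1$ can end in at most $j$. Your version is somewhat more careful than the paper's --- isolating the trailing-zero-preservation claim and checking the cases where the leading digit drops to zero or the suffix is all zeros --- but the underlying argument is the same.
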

\begin{proof}\lb
\begin{enumerate}
\item Assume that $n-a_j$ is $m$-volatile.  Then, the $\seq{a_i}$-representation of $n-a_j+1$ ends in $m$ zeroes.  If $n+1<a_{j+1}$, then Proposition~\ref{prop:cutoff} says that the last $j$ digits of the representation of $n+1$ will be the same as those of $n-a_j+1$.  In particular, this ends in $m$ zeroes, which means that $n$ is $m$-volatile, as required.  If $n+1\geq a_{j+1}$, though, then $n+1=a_{j+1}$.  This means that its representation is a single $1$ followed by at least $m$ zeroes.  Thismeans that $n$ is $m$-volatile, as required.
\item We will prove the contrapositive.  Assume that $n-a_j$ is not $m$-volatile.  Then, the $\seq{a_i}$-representation of $n-a_j+1$ does not end in $m$ zeroes.  We are given that $n+1<a_{j+1}$, so Proposition~\ref{prop:cutoff} says that the last $j$ digits of the representation of $n+1$ will be the same as those of $n-a_j+1$.  This does not end in $m$ zeroes, which means that $n$ is not $m$-volatile, as required.
\end{enumerate}
\end{proof}
\begin{prop}\label{prop:zv2}
Let $\seq{a_i}$ be a representing sequence, and let $n$ be a nonnegative integer.  If $n$ is volatile and the $\seq{a_i}$-representation of $n$ does not end in $a_1-1$, then $n$ is $2$-volatile.
\end{prop}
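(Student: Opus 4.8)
The plan is to prove this by (strong) induction on $n$, using Proposition~\ref{prop:cutoff} to strip off the leading digit and Proposition~\ref{prop:vprez} to transfer volatility back and forth between $n$ and $n-a_j$, where $j=\ind\p{n}$.

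First I would handle the base case $\ind\p{n}=0$, i.e. $0\leq n<a_1$, by noting that it is vacuous: here the $\seq{a_i}$-representation of $n$ is the single digit $n$, so the hypothesis that it does not end in $a_1-1$ forces $n\leq a_1-2$, hence $n+1<a_1$ and the representation of $n+1$ is the single digit $n+1\geq1$, which does not end in $0$. Thus $n$ could not have been volatile, so there is nothing to prove when $\ind\p{n}=0$. (Along the way one also records the trivial fact that $0$ is non-volatile, since the representation of $1=a_0$ is the single digit $1$.)

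For the inductive step, suppose $\ind\p{n}=j\geq1$, that $n$ is volatile, that its representation does not end in $a_1-1$, and that the result holds for all smaller integers. I would split on whether $n=a_{j+1}-1$. If $n=a_{j+1}-1$, then $n+1=a_{j+1}$, whose $\seq{a_i}$-representation is a single $1$ followed by $j+1$ zeroes; since $j\geq1$ this is $\p{j+1}$-volatile, hence $2$-volatile. Otherwise $n<a_{j+1}-1$, so Proposition~\ref{prop:vprez}(2) (with $m=1$) gives that $n-a_j$ is volatile. By Proposition~\ref{prop:cutoff} the representation of $n-a_j$ agrees with that of $n$ except that the leading digit is decremented, and since $j\geq1$ the last digit is unchanged, so the representation of $n-a_j$ still does not end in $a_1-1$. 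As $0\leq n-a_j<n$ (and in fact $n-a_j\geq1$, since $n-a_j$ is volatile while $0$ is not), the induction hypothesis applies and $n-a_j$ is $2$-volatile. Finally Proposition~\ref{prop:vprez}(1) (with $m=2$) promotes this back up to conclude that $n$ is $2$-volatile.

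The only subtlety, and the one step that genuinely needs to be separated out, is the boundary case $n=a_{j+1}-1$: Proposition~\ref{prop:vprez}(2) does not apply there, so one cannot descend to $n-a_j$, but the condition $\ind\p{n}\geq1$ (which we get for free once the base case is cleared) makes the trailing block of zeroes in the representation of $a_{j+1}$ have length at least $2$. Everything else is routine bookkeeping with the greedy representation and the two cited propositions.
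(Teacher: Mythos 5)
Your proof is correct, but it takes a genuinely different route from the paper's. The paper proves the contrapositive directly, with no induction: assuming $n$ is volatile but not $2$-volatile, it observes that the $\seq{a_i}$-representation of $n+1-a_1$ coincides with that of $n$ except that the last digit is replaced by $0$, so that $n+1-a_1=n-d$ where $d$ is the last digit of $n$, forcing $d=a_1-1$. That argument is only a couple of lines, but it leans on an unstated fact about how subtracting $a_1$ interacts with the greedy representation (namely that when the $a_1$-place digit of $n+1$ is positive, subtracting $a_1$ simply decrements that digit), which itself would need a small induction to justify in full. Your argument instead runs a strong induction on $n$, stripping the leading digit with Proposition~\ref{prop:cutoff} and transferring volatility up and down with the two halves of Proposition~\ref{prop:vprez}; the base case $\ind\p{n}=0$ is vacuous, and the one boundary case $n=a_{j+1}-1$, where Proposition~\ref{prop:vprez} cannot be used to descend, is handled by noting that the representation of $a_{j+1}$ ends in at least two zeroes once $j\geq1$. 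This is longer than the paper's proof, but it is self-contained given the two cited propositions and mirrors the descent template the paper uses elsewhere (e.g.\ in Theorem~\ref{thm:repwdtrace}); the paper's version buys brevity, yours buys a fully accounted-for chain of reasoning.
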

\begin{proof}
We will prove the contrapositive.  Let $n$ be volatile but not $2$-volatile, and consider $n+1-a_1$.  We see that the $\seq{a_i}$-representation of this number is the same as that of $n$, except that the last digit in its representation is zero.  So, if $d$ is the last digit in the representation of $n$, we have that $n+1-a_1=n-d$.  So, $d=a_1-1$, as required.
\end{proof}
Here is another definition we will use frequently, often in conjunction with volatility.
\begin{defin}
Let $\seq{a_i}$ be a representing sequence, and let $n$ be a nonnegative integer.  If the $\seq{a_i}$-representation of $n$ ends in $0$, we say that $n$ is a \emph{zend} (for \textbf{z}ero \textbf{end}).
\end{defin}
As a corollary to Proposition~\ref{prop:zv2}, every volatile zend is $2$-volatile.

\section{Known Results about Subtraction Games}\label{s:known}
The following proposition is the first key observation in our search for an aperiodic bounded Nim sequence.  The proof is a routine pigeonhole argument; it has been relegated to Appendix~\ref{app:nsrp}.
\begin{prop}\label{prop:sinf}
Let $S$ be a finite set of positive integers.  There exist $u,v\in\Sigma^*$ such that the Nim sequence of $S$ equals $uv^\omega$.
\end{prop}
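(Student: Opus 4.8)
The plan is to run a standard pigeonhole argument on the ``state'' of the Sprague--Grundy recursion. First I would record that the Nim sequence of a finite set $S$ lives over a finite alphabet: since $\SG_S\p{n}$ is the $\mex$ of a set of at most $\ab{S}$ nonnegative integers, we have $\SG_S\p{n}\leq\ab{S}$ for every $n$, so the Nim sequence belongs to $\Sigma_{\ab{S}}^\omega$. This finiteness is exactly what makes the state space below finite.

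Next, set $m=\max S$ and, for each $n\geq m$, define the \emph{state} $W\p{n}=\p{\SG_S\p{n-1},\SG_S\p{n-2},\ldots,\SG_S\p{n-m}}\in\Sigma_{\ab{S}}^m$. Because $\SG_S\p{n}=\mex\st{\SG_S\p{n-s}\mid s\in S}$ and every $s\in S$ satisfies $1\leq s\leq m$ (so also $s\leq n$ once $n\geq m$), the value $\SG_S\p{n}$ is a function of $W\p{n}$ alone. Moreover, $W\p{n+1}$ is obtained from $W\p{n}$ by prepending $\SG_S\p{n}$ and deleting the last coordinate, so $W\p{n+1}$ is itself a function of $W\p{n}$.

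Now comes the pigeonhole step. There are at most $\p{\ab{S}+1}^m$ possible states, so among $W\p{m},W\p{m+1},\ldots,W\p{m+\p{\ab{S}+1}^m}$ two must coincide, say $W\p{i}=W\p{j}$ with $m\leq i<j$; put $p=j-i\geq1$. A routine induction on $t\geq0$, using that $W\p{n+1}$ is determined by $W\p{n}$, shows $W\p{i+t}=W\p{j+t}$ for all $t$, and hence $\SG_S\p{i+t}=\SG_S\p{j+t}$ for all $t\geq0$; that is, $\SG_S\p{n}=\SG_S\p{n+p}$ whenever $n\geq i$. Taking $u=\SG_S\p{0}\SG_S\p{1}\cdots\SG_S\p{i-1}$ and $v=\SG_S\p{i}\SG_S\p{i+1}\cdots\SG_S\p{i+p-1}$ then exhibits the Nim sequence as $uv^\omega$, with $u,v\in\Sigma^*$ and $v$ nonempty.

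I expect no genuine obstacle; the only things requiring care are the index bookkeeping in the induction on $W$ and making the opening boundedness remark explicit, since that is what keeps the state space finite. It is worth noting in passing that the same argument yields explicit bounds on the preperiod and period (namely $i\leq m+\p{\ab{S}+1}^m$ and $p\leq\p{\ab{S}+1}^m$), although the proposition as stated asks only for existence of $u$ and $v$.
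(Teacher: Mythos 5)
Your proposal is correct and is essentially the same argument as the paper's: both bound the Sprague--Grundy values by $\ab{S}$, form the finite state space of $\max S$ consecutive values, and use the pigeonhole principle together with the deterministic $\mex$ transition to force an eventual cycle. The paper merely phrases the transition as a functional directed graph rather than an explicit state map, and your version adds explicit bounds on the preperiod and period, which the paper does not state.
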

We can take $u$ and $v$ in Proposition~\ref{prop:sinf} to have minimal length.  When we do this, we will call $u$ the \emph{prefix} for $S$ and $v$ the \emph{period} for $S$.  We will denote the period by $\per\p{S}$ and the prefix by $\pref\p{S}$.  We have an algorithm for finding $\per\p{S}$ and $\pref\p{S}$ whose details and proof of correctness can be found in Appendix~\ref{app:alg}.

The following proposition will allow us to assume without loss of generality that the greatest common divisor of the elements in our subtraction sets is $1$.  Again, the proof is routine and can be found in Appendix~\ref{app:nsrp}.
\begin{prop}\label{prop:gcd}
Let $S$ be a subtraction set, and let $g\geq2$ be an integer. For all $n\geq0$, we have $\SG_{gS}\p{n}=SG_S\p{\fl{\frac{n}{g}}}$.
\end{prop}

Here is another result that we will use frequently in our analysis.  We will refer to this result as \emph{Ferguson's Theorem}.
\begin{theorem}[Ferguson]\cite{bcg}\label{thm:ferguson}
Let $G$ be a subtraction game on subtraction set $S$.  Let $s=\min S$.  Let $SG$ denote the Sprague-Grundy function on $G$.  Then, for all $i$, $SG\p{i}=0$ if and only if $SG\p{i+s}=1$.
\end{theorem}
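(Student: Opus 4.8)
The plan is to prove both implications by a single strong induction on $i$, using the defining $\mex$ property of the Sprague-Grundy function together with the minimality of $s=\min S$. The key structural observation is that from position $i+s$, one of the legal moves is to subtract $s$, landing on position $i$; and conversely, every move from $i+s$ lands on a position of the form $i+s-t$ with $t\in S$, hence on a position $\leq i$ (since $t\geq s$) — in fact on $i$ itself or on a position strictly below $i$.

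First I would set up the induction. The base cases are the positions $i$ with $0\leq i<s$: from such an $i$ there are no legal moves, so $\SG(i)=0$, and I must check that $\SG(i+s)=1$. Every move from $i+s$ subtracts some $t\in S$ with $t\leq i+s$; since $t\geq s$ we get $t\in\{s,\dots,i+s\}$, so the resulting position is $i+s-t\in\{0,\dots,i\}$, and all of these have Sprague-Grundy value $0$ (they are all $<s$). Thus the set of values reachable from $i+s$ is exactly $\{0\}$ (it is nonempty because subtracting $s$ is legal), so $\SG(i+s)=\mex\{0\}=1$, as needed.

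For the inductive step, fix $i\geq s$ and assume the equivalence holds for all smaller indices. For the forward direction, suppose $\SG(i)=0$. I want $\SG(i+s)=1$. Subtracting $s$ from $i+s$ reaches $i$, which has value $0$, so $0$ is among the reachable values; hence it suffices to show $1$ is \emph{not} reachable, i.e.\ no move from $i+s$ reaches a position of value $1$. A move from $i+s$ reaches $i+s-t$ for $t\in S$; if $t=s$ this is $i$, with value $0\neq1$; if $t>s$ this is some $j<i$, and by the inductive hypothesis $\SG(j)=1$ would force $\SG(j-s)=0$ — but wait, I actually want to rule out value $1$ directly, so instead: if some reachable $j=i+s-t<i$ had $\SG(j)=1$, then since $j=i+s-t$ and $t\in S$, the position $j$ is reachable from $i$ as well (subtract $t$ from $i$? — no, $i-t$ need not equal $j$). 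Let me restructure: the cleaner route is that a move from $i+s$ reaching value $1$ at position $j<i$ would, by the inductive equivalence ($\SG(j)=1\iff\SG(j-s)=0$... ) — the right pairing is that by induction applied in the \emph{contrapositive} form, $\SG(j)\neq0\iff\SG(j+s)\neq1$ is not yet what I have; so I will instead argue symmetrically that the positions reachable from $i+s$ other than $i$ are exactly the positions $i-t'$ for $t'\in S\setminus\{s\}$ shifted, and invoke that $i$ itself already contributes the value $0$ while any position of value $1$ reachable from $i+s$ would have to come from a position whose $(-s)$-shift, reachable from $i$, has value $0$, contradicting $\SG(i)=0$ via $\mex$.

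The main obstacle — and the step I expect to require the most care — is precisely this bookkeeping in the inductive step: correctly matching up the positions reachable from $i+s$ with positions related to $i$ so that the inductive hypothesis applies, and handling both the "value $0$ is present" and "value $1$ is absent" halves (and their mirror images for the backward direction $\SG(i)\neq0\Rightarrow\SG(i+s)\neq1$, which needs the further observation that if $\SG(i)\neq0$ then $1$ \emph{is} reachable from $i+s$, using that $\mex$ of the values reachable from $i$ is nonzero so $0$ appears among them, i.e.\ some $i-t$ with $t\in S$ has value $0$, whence $i+s-t$ is reachable from $i+s$ and by induction has value $1$). I would organize the final write-up as: base case; then the implication $\SG(i)=0\Rightarrow\SG(i+s)=1$; then $\SG(i)\neq0\Rightarrow\SG(i+s)\neq1$; observing that together these give the full biconditional, since they show $\SG(i+s)=1$ holds exactly when $\SG(i)=0$.
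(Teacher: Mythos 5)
Your proof is correct, and its core mechanism is the same as the paper's: pair each move $t$ from position $i+s$ with the corresponding position shifted by $s$, and use the $\mex$ definition together with induction (the paper phrases this as a minimal-counterexample argument) to show that $0$ is reachable from $i+s$ exactly when $1$ should appear and vice versa. The only difference is one of packaging: the paper proves Ferguson's Theorem as a corollary of the more general Theorem~\ref{thm:genferguson} about the whole chain of values $0,1,\ldots,k-1$, whereas you prove the $0\leftrightarrow1$ case directly; your final restructured version of the forward step (a value-$1$ position $j=i+s-t$ forces $\SG\p{j-s}=\SG\p{i-t}=0$, contradicting $\SG\p{i}=0$) is exactly the right argument, though in a clean write-up you should also note the trivial subcase $j<s$, where $\SG\p{j}=0$ outright.
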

We will actually prove the following theorem, which generalizes (and implies) Ferguson's Theorem.
\begin{theorem}\label{thm:genferguson}
Let $S$ be a set of positive integers, and let $s=\min S$.  Let $k$ equal the smallest positive multiple of $s$ not in $S$.  For all integers $i$ with $0\leq i<k-1$, if $SG\p{m}=i$, then $SG\p{m+s}=i+1$, and for all integers $i$ with $0<i\leq k-1$, if $SG\p{m}=i$, then $SG\p{m-s}=i-1$.
\end{theorem}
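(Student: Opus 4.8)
The plan is to establish both implications simultaneously by a single strong induction on $m$, since proving the increment at $m$ requires the decrement at smaller positions and vice versa. Two preliminary facts make this work. First, I would record the size bound $\SG(n)\le\fl{n/s}$, proved by a one‑line induction: every option of $n$ is a position $n-t$ with $t\in S$ and $t\ge s$, so by induction $\SG(n-t)\le\fl{(n-s)/s}=\fl{n/s}-1$, and hence $\SG(n)=\mex\{\SG(n-t):t\in S,\ t\le n\}\le\fl{n/s}$; in particular $\SG(n)=0$ for $n<s$. Second, since $k$ is the least positive multiple of $s$ missing from $S$, every smaller positive multiple of $s$ lies in $S$; writing $k=qs$ this means $s,2s,\dots,(q-1)s\in S$, so from any position $n$ one may move to each of $n-s,n-2s,\dots$ as far down as $n$ permits. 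The hypothesis on $i$ is precisely the condition guaranteeing that this run of moves is long enough.

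For the induction I would fix $m$, assume the theorem for every pair of positions lying strictly below $m$, and prove two statements: (a) if $\SG(m-s)=j$ is in the admissible range, then $\SG(m)=j+1$; and (b) if $\SG(m)=i$ is in the admissible range, then $\SG(m-s)=i-1$. The first clause of the theorem is (a) applied at $m+s$, the second is (b), and in either case $s\in S$ already gives $\SG(m)\ne\SG(m-s)$. Writing $R_p=\{\SG(p-t):t\in S,\ t\le p\}$ for the set of option‑values at $p$, the argument amounts to pinning down $R_m$ (resp.\ $R_{m-s}$) well enough to compute its $\mex$.

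For (a): from $\SG(m-s)=j$ we get $\{0,\dots,j-1\}\subseteq R_{m-s}$, and for each value $\ell-1$ there I would choose $t\in S$ with $\SG(m-s-t)=\ell-1$ and apply the inductive increment at $m-t$ (valid since $m-t<m$, and $m-t\ge s$ because $m-s-t\ge0$) to obtain $\SG(m-t)=\ell$; hence $\{1,\dots,j\}\subseteq R_m$. To get $0\in R_m$, I would descend the run: applying the inductive decrement repeatedly along $m-s,m-2s,\dots$ yields $\SG\bigl(m-(j+1)s\bigr)=0$, where the size bound forces $m\ge(j+1)s$ and the admissible range for $j$ forces $(j+1)s\in S$, so that $m-(j+1)s$ really is an option of $m$. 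Finally $j+1\notin R_m$: an option $m-t$ of $m$ with $\SG(m-t)=j+1$ would, by the inductive decrement at $m-t$, give $\SG(m-s-t)=j$, i.e.\ $j\in R_{m-s}$, contradicting $\SG(m-s)=\mex R_{m-s}=j$. Therefore $\SG(m)=\mex R_m=j+1$.

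Part (b) runs the same bookkeeping in reverse and is milder, using only $s\in S$ rather than the whole run: pulling $\{1,\dots,i-1\}\subseteq R_m$ back through the inductive decrement gives $\{0,\dots,i-2\}\subseteq R_{m-s}$, and an option of $m-s$ with value $i-1$ would, via the inductive increment, produce an option of $m$ with value $i$, contradicting $\SG(m)=i$ (for $i=1$ the same step shows $0\notin R_{m-s}$). With the small‑$m$ base cases settled by $\SG(n)=0$ for $n<s$, this completes the induction. The step I expect to be the main obstacle is (a): one must check that every position fed to the inductive hypothesis is simultaneously nonnegative and strictly below $m$, and that the chains of deductions never wander outside the admissible range of Sprague--Grundy values — this is exactly the bookkeeping that the constraint on $i$ (equivalently, the length of the initial run of multiples of $s$ contained in $S$) is engineered to make possible. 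Finally, specializing to $i=0$ in the increment clause and $i=1$ in the decrement clause recovers Ferguson's Theorem.
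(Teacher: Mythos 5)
Your proposal is correct and follows essentially the same route as the paper: the paper phrases it as a minimal-counterexample argument with three violation cases, while you run a direct strong induction on the position, but the three key deductions coincide (descending the run $m-s, m-2s,\dots$ to produce a $0$-valued option, lifting options of $m-s$ up by $s$ via the inductive increment, and excluding the value $j+1$ via the inductive decrement at an option of $m$), so the difference is organizational rather than substantive. One shared caveat: both your argument and the paper's need $(i+1)s\in S$, which follows from $i<k-1$ only if $k$ is read as the smallest \emph{multiplier} $q$ with $qs\notin S$ rather than literally as the multiple $qs$; your bookkeeping inherits exactly the ambiguity already present in the paper's statement.
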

\begin{proof}
Consider the first value from $0$ to $k-1$ in the Nim sequence of $S$ that violates the claim.  Let it be in position $m$.  There are three cases to consider.
\begin{description}
\item[Followed by wrong value:] Let $0\leq i<k-1$, let $SG\p{m}=i$, and assume for a contradiction that $SG\p{m+s}\neq i+1$.  Since $SG\p{m-sh}=i-sh$ for all $h$ from $0$ to $i$, we have that $SG\p{m+s}>i$.  So, there exists $t\in S$ such that $SG\p{m+s-t}=i+1$ (or else $SG\p{m+s}=i+1$).  But then $SG\p{m-t}=i$, a contradiction.
\item[Preceded by too small value:] Let $0<i\leq k-1$, let $SG\p{m}=i$, and assume for a contradiction that $SG\p{m-s}=i-h$ for $h\geq2$.  There exists $t\in S$ such that $SG\p{m-t}=i-h+1$ (or else $SG\p{m}<i$).  But then $SG\p{m-s-t}=i-h$, a contradiction.
\item[Preceded by too large value:] Let $0<i\leq k-1$, let $SG\p{m}=i$, and assume for a contradiction that $SG\p{m-s}>i-1$.  There exists $t\in S$ such that $SG\p{m-s-t}=i-1$ (or else $SG\p{m-s}\leq i-1$).  But then $SG\p{m-t}=i$, a contradiction.
\end{description}
\end{proof}
In the future, we will refer to Theorem~\ref{thm:genferguson} as \emph{Generalized Ferguson's Theorem}.  We will also use the following definition in the future:
\begin{defin}\label{def:ferg}
We say a word $w$ on alphabet $\Sigma_k$ ($k\geq2$) is \emph{Fergusonian} if for all $0\leq i<k-1$, every $i$ is followed by an $i+1$ and every $i+1$ is preceded by an $i$.  Furthermore, we will say that $w$ is \emph{strongly Fergusonian} if it contains no consecutive $k$'s.
\end{defin}
This is a natural definition, since, by Theorem~\ref{thm:genferguson}, any Nim sequence that takes values $\st{0,1,\ldots, k}$ and whose subtraction set contains $\st{1,\ldots,k-1}$ must be Fergusonian.  In fact, any such Nim sequence must be strongly Fergusonian, since it is legal to move from position $i+1$ to position $i$ for all $i$ (meaning there are no consecutive $k$'s in the Nim sequence).  

Ferguson's Theorem has an important corollary, which implies that searching for an aperiodic, binary Nim sequence will be in vain.  The proof can be found in Appendix~\ref{app:nsrp}.
\begin{cor}\label{cor:ternary}
Let $S$ be a subtraction set with bounded, aperiodic Nim sequence.  Then, the Sprague-Grundy function on $S$ takes at least three values.  (So, the Nim sequence is not binary.)
\end{cor}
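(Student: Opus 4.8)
We argue by contradiction. Suppose $S$ has a bounded, aperiodic Nim sequence but its Sprague-Grundy function takes at most two values; I will deduce that the Nim sequence is in fact periodic, which is the contradiction. First I would reduce to the case that the two values are $0$ and $1$. We always have $SG(0)=0$, and for any two-element value set $\st{0,c}$ the $\mex$ of an arbitrary subset of $\st{0,c}$ lies in $\st{0,1}$; hence either $SG$ is identically $0$ (and the Nim sequence $0^\omega$ is periodic, done) or the value $1$ occurs and forces $c=1$. Likewise, if $S=\emptyset$ the Nim sequence is $0^\omega$. So from now on assume $S\neq\emptyset$, put $s=\min S$, and assume the Nim sequence is $\st{0,1}$-valued.

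Next I would invoke Ferguson's Theorem (Theorem~\ref{thm:ferguson}): for every $i\geq0$, $SG(i)=0$ if and only if $SG(i+s)=1$. Let $P=\st{n : SG(n)=0}$ be the set of $P$-positions. Since the Nim sequence takes only the values $0$ and $1$, the condition $SG(i+s)=1$ is equivalent to $i+s\notin P$; hence Ferguson's Theorem says exactly that, for every $m\geq s$, $m\in P$ if and only if $m-s\notin P$. On the other hand, positions $0,1,\dots,s-1$ have no legal move (because $\min S=s$), so each of them lies in $P$.

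Finally I would push this through an induction on the length-$s$ blocks $B_k=\st{ks,\,ks+1,\,\dots,\,(k+1)s-1}$ for $k\geq0$. We have $B_0\subseteq P$; and the equivalence ``$m\in P \iff m-s\notin P$'' immediately gives that $B_k\subseteq P$ whenever $B_{k-1}\cap P=\emptyset$, and $B_k\cap P=\emptyset$ whenever $B_{k-1}\subseteq P$. Inducting from $B_0\subseteq P$, we get $B_k\subseteq P$ for every even $k$ and $B_k\cap P=\emptyset$ for every odd $k$. Hence the Nim sequence of $S$ equals $\p{0^s 1^s}^\omega$, which is periodic with period $2s$, contradicting aperiodicity.

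The argument is essentially forced once Ferguson's Theorem is available, so there is no serious obstacle here; the only step that calls for a moment's care is the initial reduction — observing that a non-constant Sprague-Grundy sequence produced by $\mex$ cannot take two values unless those values are $0$ and $1$.
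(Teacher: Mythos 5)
Your proof is correct and follows essentially the same route as the paper's: invoke Ferguson's Theorem with $s=\min S$ and induct to show the Nim sequence must be $\p{0^s1^s}^\omega$, hence periodic. The only difference is that you explicitly justify the reduction to values $\st{0,1}$ (via the observation that a $\mex$ over a subset of $\st{0,c}$ lies in $\st{0,1}$), a step the paper leaves implicit.
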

\section{Representation Words}\label{s:traceable}
In this section, we introduce representation words, which form a crucial link between Nim sequences and digital representations.  The combinatorial properties of representation words will allow us to make much progress toward a general form for some aperiodic, bounded Nim sequences.  We have the following definition.
\begin{defin}
Let $\seq{a_i}$ be a representing sequence.  For each nonnegative integer $n$, let $d_n$ denote the last digit in the $\seq{a_i}$-representation of $n$.  The \emph{representation word} for $\seq{a_i}$ is the word $w$ over $\Sigma_{a_1}$ obtained as follows:
\[
w\bk{n}=\begin{cases}
a_1 & n\text{ $2$-volatile}\\
d_n & \text{otherwise}.
\end{cases}
\]
Given a respresentation word $w$, we call $\seq{a_i}$ the \emph{representation sequence} of $w$.
\end{defin}
Note that, in general, a representation word $w$ may have multiple representation sequences.  For example, the representing sequences $\p{1,2,5,13}$ and $\p{1,2,5,10,13}$ both have representation word $\p{0101201012012}^\omega$.

It may seem a bit unnatural to force the word to take value $a_1$ in position $n$ when $n$ is $2$-volatile.  Both the value $a_1$ and the condition of $n$ being $2$-volatile seem a bit aribitrary.  We will soon see that this modification is the key that allows us to connect these words to Nim sequences.

Here are a few properties of representation words.  These properties will be immediate consequences of corresponding properties of digital representations.
\begin{prop}\label{prop:trivrep}
Let $\seq{a_i}$ be a representing sequence, and let $w$ be the representation word of $\seq{a_i}$.  Fix $h\in\Sigma_{a_1}$.  Then,
\begin{itemize}
\item If $h=0$, then every $h$ in $w$ (other than the initial $0$ in $w$) is preceded by an $a_1$ or an $a_1-1$.
\item If $h<a_1-1$, then every $h$ in $w$ is followed by an $h+1$ or an $a_1$.
\item If $0<h<a_1$, then every $h$ in $w$ is preceded by an $h-1$.
\item If $h=a_1$, then every $h$ in $w$ is followed by an $a_1$ or a $0$.
\end{itemize}
\end{prop}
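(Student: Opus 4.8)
The plan is to read off each of the four bullets directly from the corresponding facts about digital representations and volatility established earlier, handling the interaction with the $2$-volatility override carefully. Throughout, for a position $n$ let $d_n$ denote the last digit of the $\seq{a_i}$-representation of $n$, so that $w[n] = a_1$ if $n$ is $2$-volatile and $w[n] = d_n$ otherwise. I would first record the key observation linking consecutive positions: if $n$ is non-volatile, then the representation of $n+1$ is obtained from that of $n$ by incrementing the last digit, so $d_{n+1} = d_n + 1$; and if $n$ is volatile but not $2$-volatile, then $n$ is a zend (its last digit is $0$), so by Proposition~\ref{prop:zv2} (the contrapositive statement there) the last digit of $n$ is $a_1 - 1$. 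In particular any volatile non-$2$-volatile $n$ has $d_n = a_1 - 1$, and any $2$-volatile $n$ has $d_n = 0$ (it is a zend, in fact).

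Now I would take the bullets one at a time. For the third bullet, suppose $0 < h < a_1$ and $w[n] = h$. Since $h \neq a_1$, $n$ is not $2$-volatile, so $d_n = h > 0$, meaning $n$ is not a zend; hence $n$ is non-volatile (a volatile non-$2$-volatile position is a zend). Then $n-1$ exists and is non-volatile with $d_{n-1} = d_n - 1 = h - 1$; since $h - 1 < h \le a_1 - 1 < a_1$, position $n-1$ is not $2$-volatile (a $2$-volatile position has last digit $0 \ne h-1$ when $h \ge 2$; when $h = 1$ we get $d_{n-1} = 0$ but $n-1$ non-volatile so not $2$-volatile), so $w[n-1] = h - 1$. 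For the second bullet, suppose $h < a_1 - 1$ and $w[n] = h$, so again $n$ is not $2$-volatile and $d_n = h$. If $n$ is non-volatile, then $d_{n+1} = h + 1 < a_1$; if $n+1$ is $2$-volatile then $w[n+1] = a_1$, otherwise $w[n+1] = h+1$; either way we are done. If $n$ is volatile then, since $h = d_n < a_1 - 1$, it is not the case that $d_n = a_1 - 1$, so $n$ is $2$-volatile --- contradicting $w[n] = h \ne a_1$. For the fourth bullet, suppose $w[n] = a_1$; then either $n$ is $2$-volatile, or $d_n = a_1$ and $n$ is not $2$-volatile. In the latter case $n$ is not a zend hence non-volatile, so $d_{n+1} = a_1 + 1$, which is impossible by Proposition~\ref{prop:maxdig} applied in the last digit position (the largest last digit is $a_1 - 1$ since $a_1 - 1 < a_1 = a_1$... here one uses that $a_1 - 1$ is the representation of $a_1 - 1$ and $a_1$ itself is a zend); so in fact $w[n] = a_1$ forces $n$ to be $2$-volatile, i.e. the representation of $n+1$ ends in at least two zeroes. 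Then consider $n+1$: its representation ends in $0$, so either $n+1$ is $2$-volatile (giving $w[n+1] = a_1$) or, writing $d_{n+1} = 0$, it is not $2$-volatile and $w[n+1] = 0$; either way $w[n+1] \in \{a_1, 0\}$.

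For the first bullet, suppose $h = 0$, $w[n] = 0$, and $n$ is not the initial position. Since $w[n] = 0 \ne a_1$ (as $a_1 \ge 1$; note if $a_1 = 1$ there is nothing special but the override value is $1 = a_1 \ne 0$), position $n$ is not $2$-volatile, so $d_n = 0$, i.e.\ $n$ is a zend. Now look at $n - 1$. If $n-1$ is $2$-volatile, then $w[n-1] = a_1$ and we are done. If $n-1$ is volatile but not $2$-volatile, then as noted $d_{n-1} = a_1 - 1$ and $n-1$ is not $2$-volatile so $w[n-1] = a_1 - 1$, and we are done. The remaining case is $n-1$ non-volatile, which would give $d_n = d_{n-1} + 1 \ge 1$, contradicting that $n$ is a zend; so this case cannot occur. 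The main obstacle, and the one requiring the most care, is the bookkeeping around the $2$-volatility override in the second and fourth bullets: one must repeatedly argue that a position with a given $w$-value either is or is not $2$-volatile, and translate between "$w[n]$" and "$d_n$" correctly, invoking Proposition~\ref{prop:zv2} and Proposition~\ref{prop:maxdig} at the right moments. Everything else is a direct unwinding of the definitions.
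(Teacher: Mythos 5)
Your overall approach is the same as the paper's: a case-by-case unwinding of the definition of the representation word, using Proposition~\ref{prop:zv2} to control the volatile-but-not-$2$-volatile case and the fact that the representations of $n$ and $n\pm1$ differ only in the last digit when no carrying occurs. All four bullets go through, and the argument is essentially correct.

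One assertion in your setup is false, however: you claim that \emph{any $2$-volatile $n$ has $d_n=0$ (it is a zend)}. This fails in general --- take $\seq{a_i}=\seq{2^i}$ (binary) and $n=3$: the representation of $n+1=4$ is $100$, so $n$ is $2$-volatile, yet the representation of $3$ is $11$, which does not end in $0$. (The implication you may have had in mind goes the other way: by Proposition~\ref{prop:zv2}, a volatile \emph{zend} is $2$-volatile.) Fortunately you invoke this false claim only in the third bullet, to argue that $n-1$ is not $2$-volatile when $h\geq2$, and there the conclusion holds for a simpler reason that you already use in your $h=1$ subcase: $n-1$ is $2$-volatile (indeed, volatile at all) only if the representation of $n$ ends in $0$, which it does not since it ends in $h>0$. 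With that one justification replaced, your proof matches the paper's.
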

\begin{proof}
Fix a nonnegative integer $n$, and let $h=w\bk{n}$.  We will consider each case separately.
\begin{itemize}
\item Let $h=0$, and let $n>0$.  Then, $n-1$ is volatile.  If $n-1$ is not $2$-volatile, then the $\seq{a_i}$-representation of $n-1$ ends in $a_1-1$ by Proposition~\ref{prop:zv2}, so $w\bk{n-1}=a_1-1$, which is permissible.  If $n-1$ is $2$-volatile, then $w\bk{n-1}=a_1$, which is also permissible.
\item Let $h<a_1-1$.  Then, $n$ is not $2$-volatile.  So, by Proposition~\ref{prop:zv2}, $n$ is non-volatile, since its $\seq{a_i}$-representation ends in $h$.  If $n+1$ is $2$-volatile, then $w\bk{n+1}=a_1$, which is permissible.  If $n+1$ is not $2$-volatile, then the $\seq{a_i}$-representation of $n+1$ ends in $h+1$, so $w\bk{n+1}=h+1$, which is also permissible.
\item Let $0<h<a_1$.  Then, the $\seq{a_i}$-representation of $n$ ends in $h$.  This implies that the $\seq{a_i}$-representation of $n-1$ ends in $h-1$, so $w\bk{n-1}=h-1$, as required.
\item Let $h=a_1$.  Then, $n$ is $2$-volatile.  If $n+1$ is $2$-volatile, then $w\bk{n+1}=a_1$, which is permissible.  If $n+1$ is not $2$-volatile, then the $\seq{a_i}$-representation of $n+1$ ends in $0$, so $w\bk{n+1}=0$, which is also permissible.
\end{itemize}
\end{proof}

The key combinatorial property of representation words is given by the following theorem.
\begin{theorem}\label{thm:repwdtrace}
Let $w$ be a word, and let $\seq{a_i}$ be a representing sequence.  The following are equivalent:
\begin{enumerate}
\item $w$ is the representation word of $\seq{a_i}$
\item\label{i:rep2} $w$ is infinite, and for each nonnegative integer $n$, we have
\[
w\bk{n}=\begin{cases}
n & n<a_1\\
a_1 & n=a_{j+1}-1\text{ and }j\geq1\\
w\bk{n-a_j} & \text{otherwise},
\end{cases}
\]
where $j=\ind\p{n}$.
\end{enumerate}
\end{theorem}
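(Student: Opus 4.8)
The plan is to deduce the equivalence from two observations: first, that the recursion in item~\ref{i:rep2} has at most one infinite solution, and second, that the representation word of $\seq{a_i}$ is a solution. Since the $a_i$ are strictly increasing positive integers, $a_j\geq1$ for every $j=\ind(n)$, so the recursive clause $w\bk{n}=w\bk{n-a_j}$ always refers back to a strictly smaller position; together with the two base clauses (for $n<a_1$ and for $n=a_{j+1}-1$ with $j\geq1$) a straightforward strong induction on $n$ shows that an infinite word satisfying the system is unique. Granting that, both implications are immediate: $(1)\Rightarrow(2)$ is exactly the statement that the representation word solves the recursion, and $(2)\Rightarrow(1)$ follows because any infinite $w$ solving the recursion must then coincide with the representation word.

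So the substance of the proof is to fix the representation word $w$ of $\seq{a_i}$ and verify the three cases of item~\ref{i:rep2}. For $n<a_1$ we have $j=0$, the $\seq{a_i}$-representation of $n$ is the single digit $n$, and $n+1\in\{1,\dots,a_1\}$ has representation either $n+1$ itself or (when $n+1=a_1$) the string $10$, in either case ending in at most one zero; hence $n$ is not $2$-volatile and $w\bk{n}=d_n=n$. For $n=a_{j+1}-1$ with $j\geq1$, the greedy algorithm gives the $\seq{a_i}$-representation of $n+1=a_{j+1}$ as a single $1$ followed by $j+1$ zeroes, which is at least two zeroes; hence $n$ is $2$-volatile and $w\bk{n}=a_1$, as required.

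The remaining (``otherwise'') case is the main obstacle. Once $n<a_1$ is excluded we have $j=\ind(n)\geq1$, and excluding $n=a_{j+1}-1$ leaves $a_j\leq n<a_{j+1}-1$. Here I would invoke Proposition~\ref{prop:cutoff}, which says the $\seq{a_i}$-representation of $n-a_j$ is that of $n$ with its leading digit decreased by one; in particular $d_{n-a_j}=d_n$. I would then invoke both parts of Proposition~\ref{prop:vprez} --- part~2 applies precisely because $n<a_{j+1}-1$ --- to conclude that $n$ is $2$-volatile if and only if $n-a_j$ is $2$-volatile. Splitting on this dichotomy, the two descriptions of $w\bk{n}$ and $w\bk{n-a_j}$ agree (both are $a_1$ in the volatile case, both are $d_n=d_{n-a_j}$ otherwise), so $w\bk{n}=w\bk{n-a_j}$. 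Since $w$ is defined at every nonnegative integer it is infinite, and the verification is complete.
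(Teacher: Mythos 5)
Your proposal is correct and follows essentially the same route as the paper: verify the three cases of the recursion for the representation word (using Proposition~\ref{prop:cutoff} and both parts of Proposition~\ref{prop:vprez} in the ``otherwise'' case), then get the converse from uniqueness of the infinite solution to the recursion. Your treatment of the $n<a_1$ case and the uniqueness induction is slightly more explicit than the paper's, but the substance is identical.
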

\begin{proof}\lb
\iffpf{
Let $w$ be the representation word of $\seq{a_i}$.  Fix $n\geq0$, and let $j=\ind\p{n}$.  There are three cases to consider.
\begin{description}
\item[$n<a_1$:] In this case, the $\seq{a_i}$-representation of $n$ has one digit, and that digit is $n$ itself.  Hence, $n$ is not $2$-volatile, so $w\bk{n}=n$, as required.
\item[$n=a_{j+1}-1\text{ and }j\geq1$:] The $\seq{a_i}$-representation of $a_{j+1}$ ends in two zeroes.  So, $n$ is $2$-volatile, so $w\bk{n}=a_1$, as required.
\item[Neither of previous cases:] By Proposition~\ref{prop:cutoff}, the $\seq{a_i}$-representations of $n$ and $n-a_j$ share the same last $j$ digits.  In particular, they share their last digit (since $j\geq1$).  Also, since $n\neq a_{j+1}-1$, Proposition~\ref{prop:vprez} implies that $n$ is $2$-volatile if and only if $n-a_j$ is $2$-volatile.  The combination of these facts implies that $w\bk{n}=w\bk{n-a_j}$, as required.
\end{description}
}
{
Let $v$ be the representation word of $\seq{a_i}$, and let $w$ be a word satisfying condition~\ref{i:rep2}.  By the forward direction $v$ also satisfies condition~\ref{i:rep2} for the same sequence $\seq{a_i}$.  Hence, $v=w$ (since $w$ is infinite), as required.
}
\end{proof}
Theorem~\ref{thm:repwdtrace} gives an efficient method to compute $w\bk{n}$, assuming that the sequence $\seq{a_i}$ grows fast enough and can be computed efficiently.  In the case that $w$ is a Nim sequence, this gives a way to efficiently compute the Sprague-Grundy function.
\subsection{Fergusonian Representation Words}
We have seen that every aperiodic Nim sequence over $\Sigma_k$ of a subtraction set $S$ containing $\bk{k-1}$ is strongly Fergusonian.  Since we will eventually show that certain representation words are, in fact, aperiodic Nim sequences of this type, we will narrow our attention onto strongly Fergusonian representation words.

Our first result will be a characterization of when a representation word is Fergusonian (not necessarily strongly Fergusonian).
\begin{prop}\label{prop:traceferg}
Let $\seq{a_i}$ be a representing sequence, and let $w$ be the reprsentation word of $\seq{a_i}$.  The following are equivalent:
\begin{enumerate}
\item\label{it:ferg1} $w$ is Fergusonian
\item\label{it:ferg2} Every $a_1$ in $w$ follows either an $a_1-1$ or an $a_1$.
\item\label{it:ferg3} $w\bk{n}=a_1$ if and only if $n$ is a volatile zend.
\end{enumerate}
\end{prop}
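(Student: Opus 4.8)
The plan is to prove the cycle of implications $\eqref{it:ferg1}\Rightarrow\eqref{it:ferg2}\Rightarrow\eqref{it:ferg3}\Rightarrow\eqref{it:ferg1}$, leaning on Proposition~\ref{prop:trivrep} and the volatility machinery (Propositions~\ref{prop:vprez}, \ref{prop:zv2}) throughout. Recall from Proposition~\ref{prop:trivrep} that in any representation word every $a_1$ is automatically followed by an $a_1$ or a $0$, every $h$ with $h<a_1-1$ is followed by an $h+1$ or an $a_1$, and every $h$ with $0<h<a_1$ is preceded by an $h-1$. Comparing these with Definition~\ref{def:ferg}, the only ways Fergusonianness can fail are: an $a_1-1$ is followed by an $a_1$ but not by a $0$ in a "Fergusonian" way — actually the genuine obstruction is that the letter $a_1$ could appear after some $h<a_1-1$ (violating "every $i+1$ is preceded by an $i$" when we think of $a_1$ as playing the role of $i+1=a_1$), and symmetrically a $0$ could be followed by something other than a $1$ or an $a_1$. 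So the content of $\eqref{it:ferg1}\Leftrightarrow\eqref{it:ferg2}$ is that pinning down "$a_1$ only follows $a_1-1$ or $a_1$" is exactly the missing Fergusonian condition; the forward direction is immediate from the definition (take $i=a_1-1$), and the reverse direction requires checking that condition~\eqref{it:ferg2} forces the remaining clauses of Definition~\ref{def:ferg}, which follows by combining \eqref{it:ferg2} with the four bullets of Proposition~\ref{prop:trivrep} — e.g. "every $0$ is followed by a $1$ or an $a_1$" is dual to \eqref{it:ferg2} and comes from reading Proposition~\ref{prop:trivrep}'s first bullet backwards together with \eqref{it:ferg2}.

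For $\eqref{it:ferg2}\Rightarrow\eqref{it:ferg3}$: one direction of \eqref{it:ferg3} is free. If $n$ is a volatile zend, then $n$ is volatile, its $\seq{a_i}$-representation ends in $0 \ne a_1-1$, so by Proposition~\ref{prop:zv2} $n$ is $2$-volatile, hence $w\bk n=a_1$ by definition of the representation word; this holds regardless of \eqref{it:ferg2}. The substantive direction is the converse: assuming \eqref{it:ferg2}, if $w\bk n=a_1$ we must show $n$ is a volatile zend. Since $w\bk n=a_1$, $n$ is $2$-volatile, in particular volatile, so it remains to show $n$ is a zend, i.e. the $\seq{a_i}$-representation of $n$ ends in $0$. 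Here I would use \eqref{it:ferg2}: the letter at position $n-1$ (if $n>0$) is $a_1-1$ or $a_1$. I would argue by induction on $\ind(n)$ together with Theorem~\ref{thm:repwdtrace}\eqref{i:rep2} and Proposition~\ref{prop:vprez}: either $n=a_{j+1}-1$ for $j=\ind(n)\ge1$ — in which case its representation is $\p{d_j}0\cdots0$, a zend — or $n$ reduces to $n-a_j$, whose representation shares the last $j\ge1$ digits with $n$, and which is again $2$-volatile (Proposition~\ref{prop:vprez}), so by induction $n-a_j$ is a zend, hence so is $n$. The role of \eqref{it:ferg2} is to rule out the remaining possibility that some non-volatile-zend $n$ nonetheless got labeled $a_1$ — but by definition of the representation word that never happens for non-$2$-volatile $n$, so actually the only thing to check is $2$-volatile $\Rightarrow$ zend, and I suspect \eqref{it:ferg2} enters precisely by forbidding a carry pattern where $n+1$ ends in $\ge2$ zeroes while $n$ does not end in $0$; that forces the last digit $d$ of $n$ to satisfy $d=a_1-1$, i.e. $w\bk n$ would be read off a predecessor that is $a_1-1$, and \eqref{it:ferg2} then pins $w\bk{n}$ consistently. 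I would trace this carefully — this is the main obstacle.

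For $\eqref{it:ferg3}\Rightarrow\eqref{it:ferg1}$: assume $w\bk n=a_1$ iff $n$ is a volatile zend. I would verify the two clauses of Definition~\ref{def:ferg} directly. Given Proposition~\ref{prop:trivrep}, the only clauses not already guaranteed are (a) every $a_1$ (playing the role of the top letter $k=a_1$) is preceded by an $a_1-1$ or participates in a consecutive block — precisely \eqref{it:ferg2}, which I would rederive from \eqref{it:ferg3}: if $w\bk n=a_1$ then $n$ is a volatile zend, so the representation of $n$ ends in $0$, hence the representation of $n-1$ ends in $a_1-1$ (decrementing a zend), so by Proposition~\ref{prop:zv2} either $n-1$ is $2$-volatile, giving $w\bk{n-1}=a_1$, or $n-1$ is non-$2$-volatile with last digit $a_1-1$, giving $w\bk{n-1}=a_1-1$; either way \eqref{it:ferg2} holds and the Fergusonian clauses for the top letter follow; and (b) "every $h$ with $h<a_1-1$ is followed by an $h+1$" and "every $h$ with $0<h\le a_1$ preceded by $h-1$", where the only gap versus Proposition~\ref{prop:trivrep} is when an $a_1$ intervenes — and an intervening $a_1$ is fine since $a_1$ is the top value and Definition~\ref{def:ferg} only constrains $i<k-1$. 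So \eqref{it:ferg3}$\Rightarrow$\eqref{it:ferg2}$\Rightarrow$\eqref{it:ferg1} closes the loop, and I could alternatively present the proof as $\eqref{it:ferg1}\Leftrightarrow\eqref{it:ferg2}$ plus $\eqref{it:ferg2}\Leftrightarrow\eqref{it:ferg3}$. The main obstacle, as noted, is the "$2$-volatile $\Rightarrow$ zend under \eqref{it:ferg2}" step, which is where the carry-propagation bookkeeping with Propositions~\ref{prop:cutoff}, \ref{prop:vprez}, and \ref{prop:zv2} has to be done carefully.
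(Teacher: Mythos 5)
Your overall architecture (the cycle $(\ref{it:ferg1})\Rightarrow(\ref{it:ferg2})\Rightarrow(\ref{it:ferg3})\Rightarrow(\ref{it:ferg1})$, with $(\ref{it:ferg1})\Rightarrow(\ref{it:ferg2})$ immediate and the ``volatile zend $\Rightarrow w\bk{n}=a_1$'' half of $(\ref{it:ferg3})$ free from Proposition~\ref{prop:zv2}) matches the paper, but the two substantive steps both have genuine gaps. In $(\ref{it:ferg2})\Rightarrow(\ref{it:ferg3})$ you reduce to showing ``$2$-volatile $\Rightarrow$ zend'' and attempt to prove it by induction \emph{without} using hypothesis~(\ref{it:ferg2}); but that implication is simply false for general representing sequences. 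For $\seq{1,2,4,\ldots}$ the integer $n=3$ has representation $11$ while $n+1=4=a_2$ has representation $100$, so $3$ is $2$-volatile but not a zend (and indeed $w=0,1,0,2,\ldots$ there, so the $a_1$ at position $3$ follows a $0$ and condition~(\ref{it:ferg2}) fails --- consistent with the proposition). Your proposed base case, that $a_{j+1}-1$ has representation $d_j0\cdots0$, is false for the same example ($a_2-1=3$ has representation $11$). You flag this step as ``the main obstacle'' and never resolve it. The resolution is much shorter than what you sketch: if $w\bk{n}=a_1$, condition~(\ref{it:ferg2}) gives $w\bk{n-1}\in\st{a_1-1,a_1}$; if $w\bk{n-1}=a_1$ then $n-1$ is $2$-volatile, hence volatile, and if $w\bk{n-1}=a_1-1$ then the representation of $n-1$ ends in the maximal units digit $a_1-1$ (Proposition~\ref{prop:maxdig}), forcing a carry, so $n-1$ is again volatile. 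Since ``$n-1$ is volatile'' means precisely that the representation of $n$ ends in $0$, $n$ is a zend, and it is volatile because it is $2$-volatile. No induction on $\ind\p{n}$ is needed.

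The second gap is in $(\ref{it:ferg3})\Rightarrow(\ref{it:ferg1})$: you declare that an ``intervening $a_1$'' after a symbol $h<a_1-1$ is ``fine since $a_1$ is the top value and Definition~\ref{def:ferg} only constrains $i<k-1$.'' This misreads the definition: for $i=h<k-1$ the definition requires every $h$ to be followed by $h+1$, so $w\bk{n}=h$ with $w\bk{n+1}=a_1$ is exactly the configuration that would destroy Fergusonianness, and it is the one case Proposition~\ref{prop:trivrep} does not already exclude. It must be ruled out using~(\ref{it:ferg3}): if $w\bk{n+1}=a_1$ then $n+1$ is a zend, so $n$ is volatile; $n$ is not $2$-volatile (else $w\bk{n}=a_1\neq h$); but then Proposition~\ref{prop:zv2} forces the representation of $n$ to end in $a_1-1$, contradicting $w\bk{n}=h<a_1-1$. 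With these two repairs your outline coincides with the paper's proof.
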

\begin{proof}\lb
\begin{description}
\item[$\p{\ref{it:ferg1}}\Rightarrow\p{\ref{it:ferg2}}$:] Let $w$ be Fergusonian.  Then, every $k$ must follow either a $k$ or a $k-1$ (since all other values are followed by a prescribed value), as required.
\item[$\p{\ref{it:ferg2}}\Rightarrow\p{\ref{it:ferg3}}$:] By Proposition~\ref{prop:zv2}, every volatile zend is $2$-volatile.  So, if $n$ is a volatile zend, then $w\bk{n}=a_1$ without any further assumptions on $w$.  Now, assume that every $a_1$ in $w$ follows either an $a_1-1$ or an $a_1$.  Let $w\bk{n}=a_1$ for some $n$.  We wish to show that $n$ is a zend.  We have that $w\bk{n-1}=a_1$ or $w\bk{n-1}=a_1-1$.  In the first case $n-1$ is $2$-volatile, and hence volatile.  In the second case, $n-1$ is also volatile.  So, since $n-1$ is volatile, $n$ is a zend, as required.
\item[$\p{\ref{it:ferg3}}\Rightarrow\p{\ref{it:ferg1}}$:] Assume that $w\bk{n}=a_1$ if and only if $n$ is a volatile zend.  We claim that $w$ is Fergusonian.  Let $0\leq h<a_1-1$.  We claim that $w\bk{n}=h$ if and only if $w\bk{n+1}=h+1$.  The reverse implication is part of Proposition~\ref{prop:trivrep}.  Proposition~\ref{prop:trivrep} also says that if $w\bk{n}=h$, then $w\bk{n+1}=\in\st{h+1,a_1}$.  If $w\bk{n+1}=a_1$, then $n+1$ is a zend.  This means that $n$ is volatile.  It cannot be that $n$ is $2$-volatile, as then we would have $w\bk{n}=a_1$.  But, Proposition~\ref{prop:zv2} tells us that we cannot have $n$ being volatile without $n$ being $2$-volatile.  Therefore, $w\bk{n+1}=h+1$, as required.
\end{description}
\end{proof}
Our next major result is a method of producing infinite strongly Fergusonian representation words.
\begin{theorem}\label{thm:fergtracegen}
Let $k\geq2$ be an integer, and let $w_1=01\cdots\p{k-1}$.  For all $i>1$, let
\[
w_i=\p{\prod_{j=1}^{i-1}w_{i-j}^{p_{i,i-j}}}k
\]
for some nonnegative integers $p_{i,\ell}$ with $1\leq\ell<i$.  If $p_{i,\ell}\geq p_{i+1,\ell}$ for all pairs $\p{i, \ell}$ with $1\leq\ell<i$ and if $p_{i,1}\geq1$ for all $i$,
then
\[
w=\limi{i}{w_i}
\]
is a strongly Fergusonian representation word, and its representation sequence is
\[
\seq{1,k=\ab{w_1},\ab{w_2},\ab{w_3},\ldots}.
\]
\end{theorem}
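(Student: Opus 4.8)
The plan is to verify the two claims — that $w$ is a representation word with the asserted representation sequence, and that it is strongly Fergusonian — largely by checking the recursive characterization from Theorem~\ref{thm:repwdtrace}(\ref{i:rep2}) against the recursive definition of the $w_i$. First I would set $a_0 = 1$ and $a_i = \ab{w_i}$ for $i \geq 1$, and observe from $p_{i,1} \geq 1$ and $p_{i,\ell} \geq 0$ that $\ab{w_i} = 1 + \sum_{j=1}^{i-1} p_{i,i-j}\ab{w_{i-j}} > \ab{w_{i-1}}$, so $\seq{a_i}$ is genuinely a representing sequence; the monotonicity condition $p_{i,\ell}\geq p_{i+1,\ell}$ is what will let the limit $w = \lim_i w_i$ exist as an infinite word (each $w_i$ is a prefix of $w_{i+1}$ because the factorization of $w_{i+1}$ begins with $w_i^{p_{i+1,i}}$ with $p_{i+1,i}\geq 1 \ldots$ — actually one needs $p_{i+1,i}\geq 1$, which is the $\ell=1$ case of $p_{i,1}\geq 1$ shifted, so $w_i$ is a proper prefix of $w_{i+1}$ and the limit is well-defined).

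The core of the argument is an induction showing that $w$ — equivalently, each sufficiently long $w_i$ — satisfies the three-case recursion in Theorem~\ref{thm:repwdtrace}(\ref{i:rep2}) for the sequence $\seq{a_i}$. The base case $n < a_1 = k$ is immediate since $w$ begins with $w_1 = 01\cdots(k-1)$. For the inductive step, fix $n$ with $a_j \leq n < a_{j+1}$, i.e. $a_j \leq n < \ab{w_{j+1}}$. Writing $w_{j+1} = w_j^{p}\,u\,k$ where $p = p_{j+1,j}\geq 1$ and $u = \prod_{j'=1}^{j-1} w_{j+1-j'}^{p_{j+1,j+1-j'}}$ collects the lower-order factors, I would split into: (a) $n = a_{j+1}-1$, where $w\bk{n}$ is the final symbol $k = a_1$, matching the middle case; (b) $n$ lies inside one of the $p$ copies of $w_j$ or inside $u$ — here the block structure shows $w\bk{n} = w\bk{n - a_j}$, since subtracting $a_j = \ab{w_j}$ moves back exactly one $w_j$-block (or, once we are inside $u$, the same bookkeeping against the shorter factors together with the outer induction hypothesis on smaller representing-sequence indices gives $w\bk{n} = w\bk{n-a_j}$). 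This is the step I expect to be the main obstacle: carefully showing that translating by $a_j$ always lands in a position with the same symbol, for $n$ ranging over the whole block $u$ as well as the $w_j$-copies, and in particular handling the boundaries between factors. The monotonicity hypothesis $p_{i,\ell}\geq p_{i+1,\ell}$ should be exactly what guarantees the internal factor $u$ of $w_{j+1}$ is itself a prefix of the appropriate earlier $w_i$'s, so the recursion propagates cleanly. Once (\ref{i:rep2}) is verified, Theorem~\ref{thm:repwdtrace} immediately gives that $w$ is the representation word of $\seq{a_i} = \seq{1, \ab{w_1}, \ab{w_2}, \ldots}$.

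It remains to show $w$ is strongly Fergusonian. By Proposition~\ref{prop:traceferg}, $w$ is Fergusonian iff every $k = a_1$ in $w$ follows an $a_1-1 = k-1$ or an $a_1 = k$. Every occurrence of $k$ in $w$ is the terminal symbol of some block $w_i$ ($i\geq 2$) in the nested factorization; I would argue by induction on $i$ that $w_i$ ends in $\ldots(k-1)k$ or $\ldots k k$. Indeed $w_2 = w_1^{p_{2,1}}k = (01\cdots(k-1))^{p_{2,1}}k$ ends in $(k-1)k$; and for $i>1$, $w_i$ ends with $(\text{last factor})\,k$ where the last factor is some $w_\ell$ with $p_{i,\ell}\geq 1$, and by induction $w_\ell$ ends in $(k-1)$ (if $\ell=1$) or in $k$ (if $\ell>1$) — either way the $k$ appended at the end of $w_i$ is preceded by $k-1$ or $k$. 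Finally, to rule out consecutive $k$'s (strong Fergusonianity), note that a $k$ is always immediately preceded by the tail of a lower-index block, and the only way to get $kk$ would be for a block $w_\ell$ with $\ell>1$ to be the last factor and end in $k$, then followed by the appended $k$ — but that gives $kk$! So I need the extra observation that $k$ never appears at the very start of any block and that when the outer $k$ is appended, the preceding factor's final $k$ is separated from it — reexamining: actually the issue is whether $w_\ell k$ with $w_\ell$ ending in $k$ produces $kk$. To avoid this I would instead show by induction that no $w_i$ ever contains $kk$: in $w_i = (\prod w_{i-j}^{p_{i,i-j}})k$, consecutive factors $w_a w_b$ never create a new $kk$ because $w_b$ starts with $0$ (each $w_i$ for $i\geq 1$ begins with $w_1$, hence with $0$), and the final appended $k$ is preceded by a factor ending in $k-1$ or $k$; if it ends in $k$ we'd have $kk$, so I must further show the last factor always ends in $k-1$, i.e. the last factor is always $w_1$ — which follows if $p_{i,1}\geq 1$ forces $w_1$ to be available as the innermost factor and the product is ordered from high index to low index, making $w_1^{p_{i,1}}$ the rightmost block before the terminal $k$. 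Thus $w_i$ ends in $\ldots 0\,1\cdots(k-1)\,k$ and contains no $kk$, completing the proof that $w$ is strongly Fergusonian.
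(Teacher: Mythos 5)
Your overall strategy is the same as the paper's: verify condition~(\ref{i:rep2}) of Theorem~\ref{thm:repwdtrace} for the sequence $\seq{1,\ab{w_1},\ab{w_2},\ldots}$, and then get strong Fergusonianity from Proposition~\ref{prop:traceferg}. Your Fergusonian argument, after some back-and-forth, lands exactly where the paper does: every appended $k$ is preceded by $k-1$ because the rightmost factor of $\prod_{j=1}^{i-1}w_{i-j}^{p_{i,i-j}}$ is $w_1^{p_{i,1}}$ with $p_{i,1}\geq1$, every block begins with $0$ so factor boundaries never create $kk$, and inductively no $w_{i-j}$ contains $kk$. That part is fine.

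The genuine gap is in the ``otherwise'' case of the recursion. For positions $n$ lying past the initial copies of $w_i$ inside $w_{i+1}$ (your factor $u$), you write that the monotonicity hypothesis ``should be exactly what guarantees'' that $u$ is a prefix of $w_i$, but you never prove it, and this is where essentially all of the work in the paper's proof lives. The paper proves the stronger claim that for every $\ell$ with $1\leq\ell\leq i-1$, the product $\prod_{j=\ell+1}^{i-1}w_{i-j}^{p_{i,i-j}}$ is a prefix of $w_{i-\ell}$, by downward induction on $\ell$: either $p_{i-\ell,i-j}=p_{i,i-j}$ for all relevant $j$ (and the prefix is visible directly in the expansion of $w_{i-\ell}$), or one takes the smallest $\ell_0$ with $p_{i-\ell,i-\ell_0}>p_{i,i-\ell_0}$, writes $w_{i-\ell_0}=\p{\prod_{j=\ell_0+1}^{i-1}w_{i-j}^{p_{i,i-j}}}v$ by the inductive hypothesis, and telescopes the surplus copies of $w_{i-\ell_0}$ into the tail. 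Without this step the identity $w\bk{n}=w\bk{n-\ab{w_i}}$ is unverified for $n$ in the tail of $w_{i+1}$, so the representation-word claim is not established. A smaller point: your assertion that $p_{i+1,i}\geq1$ ``is the $\ell=1$ case of $p_{i,1}\geq1$ shifted'' is not right --- $p_{i+1,i}$ is the exponent of $w_i$ in $w_{i+1}$, while $p_{i,1}$ is the exponent of $w_1$ in $w_i$; these are unrelated parameters, and the stated hypotheses do not literally force $p_{i+1,i}\geq1$. What does force it is the implicit requirement that $\seq{\ab{w_i}}$ be strictly increasing (i.e., a representing sequence): if $p_{i+1,i}=0$ then $p_{i+1,\ell}\leq p_{i,\ell}$ gives $\ab{w_{i+1}}\leq\ab{w_i}$, a contradiction.
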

The proof of Theorem~\ref{thm:fergtracegen} is rather long and tedious, so it has been relegated to Appendix~\ref{app:fergtracegen}.
\subsection{Connection to Nim Sequences}
Now we will see the connection between strongly Fergusonian representation words and Nim sequences of subtraction games.
Throughout this subsection, let $\seq{a_i}$ be a representing sequence with $a_1=k$, and let $w$ be the representation word of $\seq{a_i}$.  Furthermore, let $\seq{a_i}$ be such that $w$ is strongly Fergusonian.  We will now define some sets based on $\seq{a_i}$.  In all cases, we will omit the sequence if there is no ambiguity.  Let
\[
T\p{\seq{a_i}}=\st{a_i-1\mid i\geq2}\cup\bk{k-1}.
\]
Also, let $W\p{\seq{a_i}}$ denote the set of volatile zends in the $\seq{a_i}$-representation, and let $N\p{\seq{a_i}}$ denote the set of non-volatile zends.  Let $V\p{\seq{a_i}}=W\cup\bk{k-1}$.  Note that $T\subseteq V$.  Write $L\p{\seq{a_i}}=\mathbb{N}\setminus N$.
We make the following observations:
\begin{prop}\label{prop:scontt}
Let $S$ be a set of positive integers containing $\bk{k-1}$.  If $w$ is the Nim sequence for $S$, then $\st{1}\cup\p{S+1}$ contains a representation sequence for $w$.
\end{prop}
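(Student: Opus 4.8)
The plan is to produce a representing sequence $\seq{b_i}_{i\geq0}$ with $b_0=1$, with $b_i\in S+1$ for every $i\geq1$, and with representation word $w$; this is exactly the assertion. Throughout I would work with the recursive characterization in Theorem~\ref{thm:repwdtrace}(\ref{i:rep2}): a representing sequence $\seq{b_i}$ has representation word $w$ iff $w$ is infinite, $w\bk{b_i-1}=k$ for all $i\geq2$, and $w\bk n=w\bk{n-b_{\ind(n)}}$ for all $n\geq k$ with $n+1\neq b_i$ for every $i\geq2$. As preliminaries: since $\bk{k-1}\subseteq S$ we have $1=\min S$; since $w=\SG_S$ with $\bk{k-1}\subseteq S$, the remark after Definition~\ref{def:ferg} shows $w$ is strongly Fergusonian over $\Sigma_k$, so by Proposition~\ref{prop:traceferg} the occurrences of $k$ in $w$ are exactly the volatile zends; and $w\bk n=n$ for $n<k$.

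I would build $\seq{b_i}$ by induction, maintaining the invariant that $\seq{b_0,\dots,b_m}$ satisfies the conditions of Theorem~\ref{thm:repwdtrace}(\ref{i:rep2}) on the initial segment $[0,b_m)$ and that $b_i-1\in S$ for $1\leq i\leq m$. Take $b_0=1$ and $b_1=k$; then $b_1-1=k-1\in\bk{k-1}\subseteq S$, and $w\bk n=n$ for $n<b_1$. Given $b_0<\dots<b_m$ satisfying the invariant, the prefix $\seq{b_0,\dots,b_m}$ forces the value $w\bk{n-b_m}$ (greedily reduced into $[0,b_m)$) for every $n\geq b_m$; equivalently, until it is first violated, $w$ must agree with the $b_m$-periodic extension of its initial segment of length $b_m$. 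I let $n^{*}$ be the least position $n\geq b_m$ at which $w\bk n$ differs from this periodic prediction and set $b_{m+1}=n^{*}+1$. Two points need checking: first, that $n^{*}$ exists, which holds since $w$ is aperiodic (the case of finite $S$ is degenerate and is excluded here, as elsewhere in this section); and second, that the disagreement at $n^{*}$ is an occurrence $w\bk{n^{*}}=k$ not predicted by the period, rather than a predicted $k$ failing to occur --- this follows by analyzing how the $k$'s of $w$ are laid out, using that $w$ is a representation word and that strong Fergusonianity makes $w$ completely determined by the positions of its $k$'s. Granting these, the invariant is restored on $[0,b_{m+1})$.

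The crux is showing that each break-point $b_{m+1}-1$ produced this way lies in $S$, and this is where the hypothesis $w=\SG_S$ (rather than $w$ being merely a representation word) must be used, via $w\bk n=\mex\st{\,w\bk{n-t}:t\in S,\ t\leq n\,}$. At $n=b_{m+1}-1$, strong Fergusonianity forces $w\bk{n-1}=k-1,\dots,w\bk{n-k+1}=1$, so the values $1,\dots,k-1$ are reached from $n$; since $w\bk n=k$, the value $0$ must also be reached, i.e.\ some move from $n$ lands on a $P$-position. But below $n$ the Nim sequence $\SG_S$ is exactly the $b_m$-periodic pattern, so its $P$-positions there are the multiples of $b_m$ together with the translates of the other zeros of the initial segment; minimality of $n^{*}$ together with a short computation with the greedy reduction should pin down that the only move from $n$ reaching such a $P$-position has size $n$ itself, whence $n\in S$. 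The base case $m=1$ is the concrete statement that the first $k$ in $w$ sits at the least multiple of $k$ lying in $S$: adjoining to $\bk{k-1}$ any integers, none divisible by $k$, leaves the Nim sequence equal to $n\bmod k$ (a short induction on $n$), so $S$ must contain a multiple of $k$, and no smaller element can raise the $\mex$ to $k$. I expect this inductive matching --- tracking the self-similar ``global'' structure of $w$ in lockstep with the ``local'' $\mex$-recursion of $\SG_S$ so that every newly revealed $k$ in $w$ is tied to a unique element of $S$ positioned precisely there --- to be the main obstacle.
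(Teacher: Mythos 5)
Your construction runs in the opposite logical direction from the paper's, and the step you yourself flag as ``the main obstacle'' --- that each break-point $b_{m+1}-1=n^{*}$ actually lies in $S$ --- is a genuine gap, not a routine verification. Defining $n^{*}$ as the first position where $w$ departs from the $b_m$-periodic prediction does not by itself tie that position to an element of $S$: the mex at a position $p\geq b_m$ can change because some $t\in S$ with $b_m\leq t<p$ newly brings the value $w\bk{p-t}$ into the reachable set, not only because $p$ itself is in $S$ (i.e.\ because the move to position $0$ becomes available). Ruling this out is essentially the whole content of the proposition, and your sketch (``a short computation with the greedy reduction should pin down that the only move from $n$ reaching such a $P$-position has size $n$ itself'') does not supply the argument. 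Your identification of the disagreement as a new occurrence of $k$ is likewise asserted rather than proved; the clean justification (adjoining moves enlarges the mex-set, so the value can only increase, and strong Fergusonianity forces an increased value to be $k$) is only available once you know the disagreement is caused by adjoining new elements of $S$, which is exactly the circularity your setup creates.

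The paper avoids this entirely by building the candidate sequence out of $S$ from the start: it sets $c_0=1$, $c_1=k$, and takes $c_i-1$ to be the minimal $s\in S$ with $s>c_{i-1}-1$ such that the Nim sequences of $S\cap\bk{s-1}$ and $S\cap\bk{s}$ differ at or before position $s$. Membership of $\seq{c_i}$ in $\st{1}\cup\p{S+1}$ is then automatic, and the only thing left to check is that $w\bk{c_i-1}=k$, which follows from the mex-monotonicity plus strong Fergusonianity argument above together with the inductive hypothesis that $\seq{c_j}_{j<i}$ represents $\p{w\left[0..c_{i-1}\right)}^{\omega}$. If you want to salvage your version, you would need to prove that your $n^{*}$ coincides with the paper's $c_{m+1}-1$, which amounts to redoing the paper's argument; as written, the proposal does not establish the conclusion.
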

\begin{proof}
Let $U=\st{1}\cup\p{S+1}\setminus\st{2,3,\ldots,k-1}$.  Notice that $\st{1,k}\subset U$.  Define a sequence $\seq{c_i}$ by $c_0=1$, $c_1=k$, and for $i\geq2$, let
\[
c_i=1+\min_{s\in S,\;s>c_{i-1}-1}\p{\begin{array}{l}\text{The Nim sequences of }S\cap\bk{s-1}\text{ and }S\cap \bk{s}\\\text{\hspace{0.2in}differ before or in position }s\end{array}}.
\]
Clearly, this sequence is contained in $U$.  We claim that $\seq{c_i}$ is a representation sequence for $w$.

What we will prove is that for all $n\geq1$, $\seq{c_i}_{0\leq i\leq n}$ is a representation sequence for $\p{w\left[0..c_n\right)}^\omega$ (as long as $c_n$ is defined, which it will be if $w$ is not purely periodic).  The proof will be by induction on $n$.  Notice that $\p{1,k}$ is in fact a representation sequence for $\p{w\left[0..k\right)}^\omega=\p{01\cdots\p{k-1}}^\omega$.  Now, assume that $\p{c_i}_{0\leq i\leq n-1}$ is a representation sequence for $\p{w\left[0..c_{n-1}\right)}^\omega$.  By our construction of $c_n$, it follows that
\[
w\bk{c_n-1}\neq\p{\p{w\left[0..c_{n-1}\right)}^\omega}\bk{c_n-1}
\]
(since stopping $S$ at or before $c_n-1$ must modify the Nim sequence before or in position $c_n-1$, and it cannot possibly modify the sequence before position $c_n-1$).  Since we are taking the mex of more things to produce $w\bk{c_n-1}$, it must be that we increased the value in this position.  Since $w$ is strongly Fergusonian, this means that $w\bk{c_n-1}=k$, as required for $\seq{c_i}_{0\leq i\leq n}$ to be a representation sequence of $\p{w\left[0..c_{n-1}\right)}^\omega$.  The other proprerties of representation sequences follow from the fact that we give $\p{w\left[0..c_n\right)}^\omega$ the period $w\left[0..c_n\right)$.
\end{proof}
\begin{prop}\label{prop:convabsmain}
Assume that $w$ is the Nim sequence for $T$.  Then, $\p{N+T}\cap N=\emptyset$.
\end{prop}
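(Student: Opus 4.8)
The plan is to recast this set-theoretic statement as a fact about $P$-positions of the subtraction game with subtraction set $T$, where it becomes the familiar observation that no legal move can join two $P$-positions.

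The first step is to establish the concrete description $N=\st{n\geq0\mid w\bk{n}=0}$. For the inclusion ``$\subseteq$'': if $n\in N$, then $n$ is a non-volatile zend, hence it is not $2$-volatile and its last digit $d_n$ equals $0$, so by the definition of a representation word $w\bk{n}=d_n=0$. For ``$\supseteq$'': if $w\bk{n}=0$, then since $w$ takes values in $\Sigma_k$ with $k=a_1\geq2$ we have $w\bk{n}\neq a_1$, so $n$ is not $2$-volatile and $w\bk{n}=d_n=0$; thus $n$ is a zend whose representation ends in $0$, and since $a_1-1=k-1\geq1$, Proposition~\ref{prop:zv2} applied contrapositively shows such an $n$ cannot be volatile without being $2$-volatile. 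Hence $n$ is non-volatile, i.e.\ $n\in N$.

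Next, using the hypothesis $w=\SG_T$, the description just obtained says exactly that $N$ is the set of $P$-positions of the subtraction game with subtraction set $T$; here $T$ is genuinely a set of positive integers, since $a_i-1\geq a_2-1>a_1-1=k-1\geq1$ for all $i\geq2$. Suppose, for a contradiction, that some $n_2\in N$ can be written as $n_2=n_1+t$ with $n_1\in N$ and $t\in T$. Then $t\leq n_2$, so subtracting $t$ is a legal move from position $n_2$ to position $n_1$, whence $\SG_T(n_1)$ occurs among the Sprague--Grundy values reachable from $n_2$. But $n_1\in N$ forces $\SG_T(n_1)=w\bk{n_1}=0$, so $0$ is one of those reachable values, which makes their $\mex$ --- namely $\SG_T(n_2)$ --- at least $1$; on the other hand $n_2\in N$ forces $\SG_T(n_2)=w\bk{n_2}=0$, a contradiction. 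Therefore $\p{N+T}\cap N=\emptyset$.

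The only nontrivial ingredient is the identification $N=\st{n\mid w\bk{n}=0}$, which is pure bookkeeping with volatility together with Proposition~\ref{prop:zv2}; once it is in place the conclusion is the elementary fact that every move from a $P$-position lands on an $N$-position. I expect that identification to be the main (and essentially only) obstacle. Note also that the strongly Fergusonian hypothesis standing throughout this subsection is not really used in this particular proposition, beyond ensuring that the assumption ``$w$ is the Nim sequence for $T$'' can consistently hold at all.
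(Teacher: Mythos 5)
Your proposal is correct and follows essentially the same route as the paper: identify $N$ with the zero set of $w$ (the paper asserts this directly, you justify it via Proposition~\ref{prop:zv2}), then observe that a legal move between two $P$-positions contradicts the $\mex$ definition of the Sprague--Grundy function. The extra bookkeeping you supply for $N=\st{n\mid w\bk{n}=0}$ is a correct elaboration of what the paper takes as known.
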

\begin{proof}
We know that, $w\bk{i}=0$ if and only if $i\in N$.  Assume for a contradiction that there exists $s\in T$ such that $i+s\in N$ for some $i\in N$.  Then, $w\bk{i+s}=0$.  But, there is a legal move from position $i+s$ to a position whose Sprague-Grundy value is $0$ (namely position $i$).  Hence, $w\bk{i+s}\neq0$, a contradiction.  Therefore, $\p{N+T}\cap N=\emptyset$, as required.
\end{proof}
We will now work towards our main result in this subsection, which will be a sort of converse to Proposition~\ref{prop:convabsmain}.
First, we will prove a few useful lemmas about our sets.
\begin{lemma}\label{lem:nnins}
We have $L\subseteq N+T$.
\end{lemma}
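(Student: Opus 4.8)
The goal is to show $L \subseteq N+T$, where $L = \mathbb{N}\setminus N$ is the set of positive integers that are \emph{not} non-volatile zends, and $T = \st{a_i - 1 \mid i\geq 2}\cup\bk{k-1}$. So I must show: if $n$ is either non-zend or volatile (i.e.\ $n\in L$), then $n$ can be written as $m + s$ with $m$ a non-volatile zend and $s\in T$. The natural approach is to induct on $n$, using the recursive structure of the $\seq{a_i}$-representation together with Theorem~\ref{thm:repwdtrace} and the volatility propagation results (Propositions~\ref{prop:vprez} and~\ref{prop:zv2}).

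**Plan of the induction.**

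First I would set $j = \ind(n)$ and split on cases according to the last digit $d$ of the $\seq{a_i}$-representation of $n$ and whether $n$ is volatile. If $n$'s representation ends in a digit $d$ with $1\leq d\leq k-1$, then $n-d$ is a zend and $d\in\bk{k-1}\subseteq T$; I would check that $n-d$ is actually \emph{non-volatile} (if $n-d$ were volatile, then since its last digit is $0$, $n$'s last $j$ digits would have to end in $d$ following a carry pattern that forces... — more carefully, using Proposition~\ref{prop:cutoff}-style reasoning, $n$ volatile-ness and $n-d$ volatile-ness relate, and one argues $n-d$ non-volatile directly, or else recurses). If that case fails to deliver a non-volatile zend, or if $n$ is itself a zend (last digit $0$), I would instead peel off the leading term: write $n = n' + c\,a_j$ where $n' = n \bmod a_j$, or use that $n$ and $n - a_j$ have the same tail, so $n\in L \iff n-a_j\in L$ except at the boundary $n = a_{j+1}-1$ (Proposition~\ref{prop:vprez}). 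Since $\bk{k-1}\subseteq T$ takes care of small $n$, the induction hypothesis applied to $n - a_j$ (which is in $L$ and strictly smaller) gives $n - a_j = m + s$ with $m$ a non-volatile zend, $s\in T$, and then $n = (m + a_j) + s$; one must check $m + a_j$ is still a non-volatile zend, which follows because adding $a_j$ to $m$ (whose index is below $j$, since $m \leq n - a_j < a_j$ in the relevant sub-case) just prepends a digit and preserves the tail.

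**The boundary case.**

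The delicate case is $n = a_{j+1} - 1$ with $j\geq 1$, which is exactly where the recursion in Theorem~\ref{thm:repwdtrace} breaks and where $n$ is $2$-volatile and hence in $L$ (as a volatile non-zend or volatile zend). Here I would use $s = a_{j+1} - 1 - (a_j - 1) = a_{j+1} - a_j$... no — rather, I expect to write $n = (a_{j+1} - 1 - a_j) + a_j$ won't directly help since $a_j\notin T$ in general. Instead the right move is: $a_j - 1 \in T$, and $n - (a_j - 1) = a_{j+1} - a_j$; I'd need this to be a non-volatile zend. Alternatively, peel differently: since $w$ is strongly Fergusonian, Proposition~\ref{prop:traceferg} tells us $w\bk{n} = k$ iff $n$ is a volatile zend, which constrains where the $2$-volatile points and the zends can sit, and I would leverage that to locate a suitable decomposition. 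I expect \textbf{this boundary case to be the main obstacle}: showing that the peeled-off remainder $n - s$ is genuinely a \emph{non-volatile} zend (not merely a zend) is where the hypotheses $p_{i,\ell}\geq p_{i+1,\ell}$ and $p_{i,1}\geq 1$ — inherited via the assumption that $w$ is a strongly Fergusonian representation word — must be used, presumably through Proposition~\ref{prop:traceferg}'s characterization. The rest of the argument is routine digit-bookkeeping via Propositions~\ref{prop:cutoff}, \ref{prop:vprez}, and~\ref{prop:zv2}.
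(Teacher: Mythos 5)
Your first case (when $n$ is not a zend) matches the paper's: subtract the last digit $d\in\bk{k-1}\subseteq T$, and $n-d$ is a non-volatile zend because adding $1$ to it just restores a nonzero last digit without carrying. The genuine gap is in the volatile-zend case, which is the heart of the lemma and which you explicitly leave unresolved (``I expect this boundary case to be the main obstacle''). Your inductive peeling of the leading term $a_j$ breaks down exactly at $n=a_{j+1}-1$: there Proposition~\ref{prop:vprez} no longer transfers volatility downward, $n-a_j$ may well lie in $N$ so the induction hypothesis does not apply, and $a_j$ itself is not an element of $T$, so you cannot recombine. The candidate you float for that case, $s=a_j-1$ with $j=\ind\p{n}$, is the wrong index and you do not verify that $n-s\in N$.

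The missing idea is the paper's choice of subtrahend, and it makes the whole argument direct rather than inductive. For $n\in W$, Proposition~\ref{prop:zv2} gives that $n+1$ ends in at least two zeroes; let $j$ be the exact number of trailing zeroes of $n+1$ and take $s=a_j-1\in T$. Then $n-s=\p{n+1}-a_j$, whose representation is that of $n+1$ with the $a_j$ digit decreased by one, so $n-s-1$ is $2$-volatile, i.e.\ $w\bk{n-s-1}=a_1$. Strong Fergusonicity (every $a_1$ is followed by a $0$, not another $a_1$) then forces $w\bk{n-s}=0$, i.e.\ $n-s\in N$, and $n=\p{n-s}+s\in N+T$. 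This single computation also disposes of your problematic boundary subcase uniformly: when $n=a_{j'+1}-1$, one gets $j=j'+1$ and $n-s=0\in N$. Note also where strong Fergusonicity actually enters --- it is used to identify $n-s$ as a non-volatile zend, not (as you speculate) through the $p_{i,\ell}$ monotonicity conditions of Theorem~\ref{thm:fergtracegen}, which play no role in this lemma.
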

\begin{proof}
Let $n\in L$.  There are two cases to consider.
\begin{description}
\item[$n\notin W$:] In this case, the representation of $n$ does not end in $0$ (since $n\notin N$).  Let the last digit of this representation be $i$.  Since $0<i<k$, we have $i\in T$.  Hence, $n-i$ is a zend, and it is non-volatile, since $n-i+i$ has all the same digits except for the last one.  So, since $n=\p{n-i}+i$, we have $n\in N+T$, as required.
\item[$n\in W$:] In this case, the representation of $n+1$ ends in at least two zeroes by Proposition~\ref{prop:zv2}.  Let $j$ be the number of zeroes at the end of the representation of $n+1$.  Consider $n-\p{a_j-1}$.  Since $n-\p{a_j-1}=\p{n+1}-a_j$, we obtain a representation will all the same digits as that of $n+1$, but the $a_j$ digit of $n+1$ is decreased by $1$ (possibly to $0$).  Hence, $n-\p{a_j-1}-1$ is $2$-volatile, so $w\bk{n-\p{a_j-1}-1}=a_1$.  Since $w$ is strongly Fergusonian, $w\bk{n-\p{a_j-1}}=0$, so $n-\p{a_j-1}\in N$.  We also have $a_j-1\in T$.  This gives
\[
n=\p{n-\p{a_j-1}}+\p{a_j-1}\in N+T,
\]
as required.
\end{description}
\end{proof}
\begin{lemma}\label{lem:nsnn}
Let $S$ be a set satisfying $T\subseteq S$.  If $\p{N+S}\cap N=\emptyset$, then $N+S=L$.
\end{lemma}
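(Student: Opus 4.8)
We want to show that if $T\subseteq S$ and $(N+S)\cap N=\emptyset$, then $N+S=L$. One inclusion is essentially free: since $T\subseteq S$, Lemma~\ref{lem:nnins} gives $L\subseteq N+T\subseteq N+S$. For the reverse inclusion $N+S\subseteq L$, recall that $L=\mathbb{N}\setminus N$, so I must show $N+S$ is disjoint from $N$ — but that is precisely the hypothesis $(N+S)\cap N=\emptyset$, with the one caveat that I also need $N+S\subseteq\mathbb{N}$, which is automatic since $N$ consists of nonnegative integers and $S$ of positive integers, so every element of $N+S$ is a positive integer. (A small point worth checking: does $0\in N$? The representation of $0$ is the single digit $0$, so $0$ is a zend; $0+1=1$ has representation ending in $1$ if $a_1\geq 2$, i.e. $1$ is not a zend, so $0$ is a non-volatile zend and $0\in N$. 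Then $N+S$ consists of strictly positive integers, so $0\notin N+S$, consistent with $0\in N$ and hence $0\notin L$.)

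Writing it out: first I would record that $N+S\subseteq L$. Take $n\in N$ and $s\in S$; then $n+s$ is a positive integer, and by hypothesis $n+s\notin N$, so $n+s\in\mathbb{N}\setminus N=L$. Second, I would record $L\subseteq N+S$: take $n\in L$; by Lemma~\ref{lem:nnins}, $n\in N+T$, and since $T\subseteq S$ we get $N+T\subseteq N+S$, so $n\in N+S$. Combining the two inclusions yields $N+S=L$.

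**Main obstacle.** Honestly there is no serious obstacle here — the lemma is a short bookkeeping argument that packages Lemma~\ref{lem:nnins} (the genuinely substantive input) together with the hypothesis. The only thing to be careful about is the set-theoretic edge case at $0$ and making sure the definitions of $N$, $L$, and the set operations line up so that ``$(N+S)\cap N=\emptyset$'' together with ``everything in $N+S$ is a nonnegative integer'' really does force $N+S\subseteq\mathbb{N}\setminus N=L$. Since $L=\mathbb{N}\setminus N$ by definition, and $N+S$ is a set of positive integers that misses $N$, the containment is immediate. I would therefore keep the proof to two or three sentences, citing Lemma~\ref{lem:nnins} for the hard direction and the hypothesis for the easy one.
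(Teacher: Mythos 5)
Your proof is correct and follows exactly the same two-inclusion argument as the paper: $L\subseteq N+T\subseteq N+S$ from Lemma~\ref{lem:nnins} and $T\subseteq S$, and $N+S\subseteq L$ directly from the disjointness hypothesis. The extra check that $N+S$ lands in the positive integers (and the remark about $0\in N$) is a harmless bit of added care that the paper leaves implicit.
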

\begin{proof}
We already know that $L\subseteq N+S$, since $L\subseteq N+T$ by Lemma~\ref{lem:nnins} and $T\subseteq S$.  Also, we have that $N+S\subseteq L$, since we are told that $\p{N+S}\cap N=\emptyset$.
\end{proof}
\begin{lemma}\label{lem:vsv0}
Let $S$ be a set of positive integers.
If $\p{N+S}\cap N=\emptyset$, then $\p{W+S}\cap W=\emptyset$.
\end{lemma}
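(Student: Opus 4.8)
The entire argument reduces to the translation inclusion $W+1\subseteq N$, so the plan is to prove that first and then read off the lemma in one line. Indeed, suppose $\p{W+S}\cap W\neq\emptyset$, so there are $i\in W$ and $s\in S$ with $i+s\in W$. Granting $W+1\subseteq N$, we get $i+1\in N$ and $\p{i+1}+s=\p{i+s}+1\in N$, so $\p{i+1}+s\in\p{N+S}\cap N$, and this intersection is nonempty. Taking the contrapositive, $\p{N+S}\cap N=\emptyset$ forces $\p{W+S}\cap W=\emptyset$, which is the lemma. Notice that $S$ is used only to produce this common translate; all the real content lives in $W+1\subseteq N$.

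To prove $W+1\subseteq N$, fix $i\in W$, that is, a volatile zend, and recall $a_1=k\geq 2$. Since the $\seq{a_i}$-representation of $i$ ends in $0$ (hence not in $a_1-1$) and $i$ is volatile, Proposition~\ref{prop:zv2} shows $i$ is in fact $2$-volatile, so the definition of the representation word gives $w\bk{i}=a_1$. Because $w$ is strongly Fergusonian it has no two consecutive $a_1$'s, whence $w\bk{i+1}\neq a_1$; but Proposition~\ref{prop:trivrep} says an $a_1$ in $w$ is always followed by an $a_1$ or a $0$, so $w\bk{i+1}=0$. As $a_1\geq 2$, this equality forces $i+1$ to not be $2$-volatile, so $w\bk{i+1}$ equals the last digit of the $\seq{a_i}$-representation of $i+1$; thus that representation ends in $0$ and $i+1$ is a zend. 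It cannot be a volatile zend, since (again by Proposition~\ref{prop:zv2}) volatile zends are $2$-volatile. Therefore $i+1$ is a non-volatile zend, i.e., $i+1\in N$.

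I do not anticipate a real obstacle here. The one thing to see is that in a strongly Fergusonian representation word every occurrence of the maximal symbol $a_1$ is immediately followed by a $0$ --- equivalently, each volatile zend is immediately succeeded by a non-volatile zend --- and everything else is just unwinding the definitions of zend and $2$-volatility. The only point that needs a moment's care is that ``$w\bk{i+1}=0$'' by itself only gives ``not $2$-volatile''; one has to reuse that volatile zends are $2$-volatile to upgrade this to ``non-volatile'', and this is also where $k\geq 2$ is quietly used.
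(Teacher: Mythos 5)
Your proof is correct and follows essentially the same route as the paper: the paper's argument is exactly the observation that strong Fergusonianness gives $W\subseteq N-1$ (equivalently $W+1\subseteq N$), after which the conclusion follows by translating the hypothesis by $1$. The only difference is that you spell out the verification of $W+1\subseteq N$ via Propositions~\ref{prop:zv2} and~\ref{prop:trivrep}, which the paper merely asserts.
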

\begin{proof}
We are given that
\[
\p{N+S}\cap N=\emptyset.
\]
Subtracting $1$ from both sets yields
\[
\p{\p{N-1}+S}\cap\p{N-1}=\emptyset.
\]
Since $w$ is strongly Fergusonian, we have that $W\subseteq N-1$.  So, we obtain
\[
\p{W+S}\cap W=\emptyset,
\]
as required.
\end{proof}
We can now prove our main theorem in this subsection.
\begin{theorem}\label{thm:absmain}
Let $I$ be a set
satisfying $T\subseteq I
$ and $\p{N+I}\cap N=\emptyset$.  Then,
\begin{itemize}
\item For all sets $S$ with $T\subseteq S\subseteq I$, $w$ is the Nim sequence for $S$.
\item For all $i$, if $a+a_i-S_i\subseteq L$ for all $a\in N$ less than $a_i$, then $\p{w\left[0..a_i\right)}^\omega$ is the Nim sequence for the set $S_i=\st{s\in S\mid s<a_i}$.
\end{itemize}
\end{theorem}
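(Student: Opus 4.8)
The plan is to prove both items by a single simultaneous induction on position $n$, verifying that the Sprague-Grundy values computed from the subtraction set agree with $w$ (respectively with the periodic word $\p{w\left[0..a_i\right)}^\omega$). The backbone of the argument is the recursive structure of $w$ given by Theorem~\ref{thm:repwdtrace}(\ref{i:rep2}): $w\bk{n}$ is either $n$ (for $n<k$), or $k$ (when $n=a_{j+1}-1$), or $w\bk{n-a_j}$ where $j=\ind\p{n}$. Since $a_j-1\in T\subseteq S$, the position $n-a_j$ is always a legal move away from $n$ whenever $n-a_j\geq 0$ and $n\neq a_{j+1}-1$ is a borderline case handled by the $k$-clause, so the recursion in $w$ is mirrored by an actual move in the game. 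This is what lets us push the induction through.

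First I would handle the first bullet. Fix $S$ with $T\subseteq S\subseteq I$ and let $\SG$ denote $\SG_S$; I claim $\SG\p{n}=w\bk{n}$ for all $n$, by strong induction. The base case $n<k$ is immediate since $\bk{k-1}\subseteq T\subseteq S$ forces $\SG\p{n}=n$ there. For the inductive step with $n\geq k$ and $j=\ind\p{n}$, I must show two things: (a) every value strictly below $w\bk{n}$ is hit by some legal move, and (b) $w\bk{n}$ itself is not hit. For (b): a move from $n$ by $s\in S$ lands at $n-s$; I must rule out $w\bk{n-s}=w\bk{n}$. Here I use that $w$ is strongly Fergusonian together with the hypothesis $\p{N+I}\cap N=\emptyset$ (hence $\p{N+S}\cap N=\emptyset$ and, via Lemma~\ref{lem:vsv0}, $\p{W+S}\cap W=\emptyset$): if $w\bk{n}=0$ then $n\in N$ and a move landing at another element of $N$ would contradict $\p{N+S}\cap N=\emptyset$; if $w\bk{n}=k$ then $n\in W\cup\bk{k-1}$, and for $n\geq k$ we have $n\in W$, so a move to another $W$-element contradicts $\p{W+S}\cap W=\emptyset$; and if $0<w\bk{n}<k$, Proposition~\ref{prop:trivrep} (every $h$ with $0<h<a_1$ is preceded by an $h-1$) plus the Fergusonian structure propagated through $T$ pins down the surrounding values so that no $S$-move can repeat $w\bk{n}$. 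For (a): I use Generalized Ferguson's Theorem (Theorem~\ref{thm:genferguson}) since $\bk{k-1}\subseteq S$ and $k$ is the smallest positive multiple of $1$ (i.e.\ smallest integer) not in $S$ when $S=T$, but more robustly I use Lemma~\ref{lem:nnins} ($L\subseteq N+T\subseteq N+S$) and Lemma~\ref{lem:nsnn} ($N+S=L$) to control exactly which positions are $P$-positions, then invoke the recursion $w\bk{n}=w\bk{n-a_j}$ and induction to produce moves realizing each smaller value: the move by $a_j-1\in T$ realizes the chain down from $w\bk{n}$, and the Fergusonian property guarantees the intermediate values $w\bk{n}-1,w\bk{n}-2,\ldots,1,0$ appear at $n-(a_j-1)-s$ for appropriate $s\in\bk{k-1}$.

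For the second bullet, the argument is the same induction but carried out modulo $a_i$ on the periodic word $\p{w\left[0..a_i\right)}^\omega$, using only moves in $S_i=\st{s\in S\mid s<a_i}$. The extra hypothesis ``$a+a_i-S_i\subseteq L$ for all $a\in N$ less than $a_i$'' is exactly what is needed to guarantee (b) in the periodic setting: without it, the wrap-around could create a spurious move from some periodic $P$-position to another, i.e.\ a violation of the periodic analogue of $\p{N+S_i}\cap N=\emptyset$; the condition says every such would-be move lands in $L$ (a non-$P$-position), so the mex at periodic $P$-positions is still $0$, and the Fergusonian propagation does the rest. I would phrase this by noting $\per$ and $\pref$ of $S_i$ must then equal those forced by $\p{w\left[0..a_i\right)}^\omega$, or more directly: the greedy recursion of Theorem~\ref{thm:repwdtrace} truncated at index $i-1$ already shows $\p{w\left[0..a_i\right)}^\omega$ has representing sequence $\seq{a_0,\ldots,a_{i-1}}$, and the same case analysis as above (base case, clause $n\equiv a_{j+1}-1$, recursive clause), now with all arithmetic in $\mathbb{Z}/a_i\mathbb{Z}$, shows its values are the Sprague-Grundy values of $S_i$.

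The main obstacle will be step (b) in full generality --- ruling out that some legal move repeats the current Sprague-Grundy value. The cases $w\bk{n}\in\st{0,k}$ are clean because they reduce directly to $\p{N+S}\cap N=\emptyset$ and $\p{W+S}\cap W=\emptyset$. The delicate case is $0<w\bk{n}<k$: one must show that no $s\in S$ with $s>k-1$ (hence $s=a_\ell-1$ for some $\ell$, or $s\in I\setminus T$) can move $n$ to a position with the same value. This requires combining Proposition~\ref{prop:trivrep}, the location of volatile zends (Proposition~\ref{prop:traceferg}(\ref{it:ferg3})), and the structural constraint $\p{N+I}\cap N=\emptyset$ to show that the ``shape'' of $w$ around any two positions an $S$-distance apart cannot present equal non-extreme values; I expect this to be where the technical weight of the proof sits, and where the hypothesis $T\subseteq I$ (not merely $T\subseteq S$) is genuinely used.
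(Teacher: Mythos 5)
Your overall strategy (induction on position, zeros controlled by $\p{N+S}\cap N=\emptyset$, the value $k$ controlled by $\p{W+S}\cap W=\emptyset$ via Lemma~\ref{lem:vsv0}, and the wrap-around hypothesis $a+a_i-S_i\subseteq L$ doing exactly the job you assign it in the periodic case) matches the paper's. But there is a genuine gap: your step (b) in the case $0<w\bk{n}<k$ is never actually proved. You list ingredients (Proposition~\ref{prop:trivrep}, volatile zends, $\p{N+I}\cap N=\emptyset$) and then write that you ``expect this to be where the technical weight of the proof sits.'' That is not a proof of the step, and as written the argument does not close: nothing you state rules out a move $s\in S$ with $s>k-1$ landing on a position carrying the same intermediate value.

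The reason this gap appears is architectural. By insisting on a direct mex verification at every position, you force yourself to exclude repeated intermediate values by hand. The paper instead establishes only that the $P$-positions of $S$ are exactly $N$ (your $m=0$ case, which you handle correctly), and then invokes Generalized Ferguson's Theorem (Theorem~\ref{thm:genferguson}): since $\bk{k-1}\subseteq S$, every value $a$ with $1\leq a<k$ in the Sprague-Grundy sequence sits exactly one step after an $a-1$, so the positions of $1,\ldots,k-1$ are determined by the positions of the zeros with no further case analysis; only the positions where $w$ has a $k$ remain, and those are handled by $\p{W+I}\cap W=\emptyset$. If you want to stay inside your framework, the delicate case collapses the same way: if $\SG\p{n}=h=\SG\p{n-s}$ with $0<h<k$ and $s\in S$, then Generalized Ferguson gives $\SG\p{n-h}=0=\SG\p{n-s-h}$, so $n-h$ and $\p{n-h}-s$ both lie in $N$, contradicting $\p{N+S}\cap N=\emptyset$. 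Either way the ``technically heavy'' step is two lines once Ferguson is deployed for exclusion rather than only for realization; you invoke it only for your step (a). A smaller omission of the same kind: in the periodic bullet you address only the $m=0$ wrap-around, whereas the $m=k$ wrap-around also needs an argument (the paper uses Proposition~\ref{prop:vprez} to show $a+a_i\in W$ fails to be reachable as a repeat of $k$).
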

\begin{proof}
First, we will prove that the Nim sequence of $S$ equals $w$.  We know that the zeroes in $w$ are in precisely the positions in $N$.
We claim that the zeroes of the Sprague-Grundy function of $S$ are also in those positions.  To prove this claim, we will show that position $n$ is a $P$-position if and only if $n\in N$.  The proof will be by induction on $n$.  If $n=0$, then the player to move loses instantly, so that is a $P$-position.  Since $0\in N$, this is the required outcome.  Now, assume that the claim holds for all $m<n$.  There are two cases to consider.
\begin{description}
\item[$n\in N$:] We will show that $n$ is a $P$-position.  By Lemma~\ref{lem:nsnn}, $N+I=L$.  So, since $S\subseteq I$, for all $s\in S$ with $s\leq n$, $n-s\notin N$.  By the inductive hypothesis, all of the positions $n-s$ are $N$-positions.  In other words, all legal moves from position $n$ are to $N$-positions.  This implies that $n$ is a $P$-position, as required.
\item[$n\notin N$:] We will show that $n$ is an $N$-position.  Since $T\subseteq I$,  Lemma~\ref{lem:nsnn} implies that $N+T=L$.  So, since $T\subseteq S$, there exists $s\in S$ such that $n-s\in N$.  By the inductive hypothesis, $n-s$ is a $P$-position.  In other words, there is a legal move from $n$ to a $P$-position.  This implies that $n$ is an $N$-position, as required.
\end{description}
Therefore, the zeroes of the Sprague-Grundy function of $S$ are in the same positions as the zeroes in $w$.
Now, recall that $w$ is Fergusonian.  So, for all $1\leq a<k$, every $a$ in $w$ is preceded by an $a-1$, and every $a-1$ in $w$ is followed by an $a$.  The same property holds for the $a$'s of the Sprague-Grundy function of $S$ by Theorem~\ref{thm:genferguson}.

Since $w$ is strongly Fergusonian, every $k$ in $w$ follows a $k-1$ and is followed by a $0$.  So, all that remains to verify is that every value of the Sprague-Grundy function of $S$ lies in $\st{0,1,\ldots, k}$.  The only positions left to classify are the positions where $w$ has a $k$.  These positions are precisely the set $W$.
By Lemma~\ref{lem:vsv0}, $\p{W+I}\cap W=\emptyset$.
So, if $n\in W$, this, since $S\subseteq I$, we have that all legal moves from $n$ are to positions not in $W$.  Hence, all legal moves are to positions with Sprague-Grundy values less than $k$.  Therefore, the Sprague-Grundy function at $n$ is at most $k$, so, since it is not less than $k$, it must equal $k$, as required.

We have shown that that Nim sequence of $S$ equals $w$.  We now wish to show that the Nim sequence of $S_i=\p{w\left[0..a_i\right)}^\omega$ for all $i$, under the additional assumption that for all $a<a_i$ with $a\in N$, $a+a_i-S_i\subseteq L$.
This proof will be by induction on $n$, the current position.
If $n<a_i$, then the Nim sequence matches that of $S$ (since the only legal moves thus far lie in $S_i$), as required.  Now, assume that $n\geq a_i$ and that we have the given Nim sequence through position $n-1$.  Let the proposed Sprague-Grundy value at $n$ be $m$.  The length of our proposed period for $S_i$ is $a_i$.  So, the legal moves from position $n$ can fall into only the current period and the previous period.  Let $a=n\modd a_i$, and note that $m$ depends only on $a$.
There are three cases to consider.
\begin{description}
\item[$m=0$:] In this case, the Sprague-Grundy value at $a$ is $0$, so we must have $a\in N$.  We will now show that for all $s\in S_i$, the Sprague-Grundy value of $n-s$ is nonzero.  If $s\leq a$, the Sprague-Grundy function of $n-s$ is the same as the Sprague-Grundy function of $a-s$, which is nonzero because $a\in N$ and $s\in S$.  If $s>a$, then the Sprague-Grundy function of $n-s$ is the same as the Sprague-Grundy function of $a+a_i-s$.
But, we are given that $a+a_i-S_i\subseteq L$, so $a+a_i-s\in L$, so this Sprague-Grundy value is nonzero, as required.
\item[$1\leq m<k$:] This case follows from Theorem~\ref{thm:genferguson} and the previous case.
\item[$m=k$:] In this case, the Sprague-Grundy value at $a$ is $k$, so we must have $a\in W$.
By Theorem~\ref{thm:genferguson} and the previous cases, the Sprague-Grundy value cannot be less than $k$.
Hence, it will suffice to show that for all $s\in S_i$, the Sprague-Grundy value of $n-s$ is not equal to $k$.  If $s\leq a$, the Sprague-Grundy function of $n-s$ is the same as the Sprague-Grundy function of $a-s$, which is not $k$ because $a\in W$ and $s\in S$.  If $s>a$, then the Sprague-Grundy function of $n-s$ is the same as the Sprague-Grundy function of $a+a_i-s$.  But, since $a_i>a$, we have that $a+a_i\in W$ also by Lemma~\ref{prop:vprez}, so this is not $k$, as required.
\end{description}
Therefore, the Nim sequence of $S_i$ is $\p{w\left[0..a_i\right)}^\omega$, as required.
\end{proof}
When we apply Theorem~\ref{thm:absmain}, we will commonly have $I=V$, which will mean that we can take $S=T$, $S=V$, or anything in between in those cases.  We conclude this section by giving a sufficient condition for the property that $a+a_i-S_i\subseteq L$ for all $a<a_i$ with $a\in N$.
\begin{prop}\label{prop:2aain}
If $\p{N+S}\cap N=\emptyset$ and $a_{i+1}>2a_i$ (for some specific $i$), then for all $a<a_i$ with $a\in N$, $a+a_i\in N$.  In particular, $a+a_i-S_i\subseteq L$.
\end{prop}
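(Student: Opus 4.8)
The plan is to prove the two assertions in turn. Note that the first one, that $a+a_i\in N$, depends only on the hypothesis $a_{i+1}>2a_i$ and is a pure statement about $\seq{a_i}$-representations, while the ``in particular'' clause also invokes $\p{N+S}\cap N=\emptyset$. We may assume $i\geq1$, which is the only case arising in our applications.

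First I would fix $a\in N$ with $a<a_i$. Since the $\seq{a_i}$-representation of $a_i$ is a single $1$ followed by $i\geq1$ zeroes, $a_i-1$ is volatile, hence $a_i-1\notin N$; therefore $a\leq a_i-2$. Now $a_i\leq a+a_i<2a_i<a_{i+1}$, so $\ind\p{a+a_i}=i$, and by the greedy algorithm the first digit of the $\seq{a_i}$-representation of $a+a_i$ is $\fl{\p{a+a_i}/a_i}=1$. By Proposition~\ref{prop:cutoff}, deleting that leading $1$ leaves the $\seq{a_i}$-representation of $\p{a+a_i}-a_i=a$, padded with leading zeroes to $i$ digits. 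Since $a$ is a zend its last digit is $0$, so the last digit of $a+a_i$ is also $0$; thus $a+a_i$ is a zend. Running the same argument verbatim with $a+1$ in place of $a$ (legitimate because $a+1\leq a_i-1<a_i$, so $\ind\p{\p{a+1}+a_i}=i$ as well) shows that the $\seq{a_i}$-representation of $a+a_i+1=\p{a+1}+a_i$ ends in the same digit as that of $a+1$, which is nonzero because $a$ is non-volatile. Hence $a+a_i$ is a non-volatile zend, i.e.\ $a+a_i\in N$.

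For the last clause, take any $s\in S_i$, so that $s\in S$ and $1\leq s<a_i\leq a+a_i$; in particular $a+a_i-s$ is a positive integer. If $a+a_i-s$ were in $N$, then $a+a_i=\p{a+a_i-s}+s\in N+S$, which together with $a+a_i\in N$ from the previous paragraph contradicts $\p{N+S}\cap N=\emptyset$. Thus $a+a_i-s\in L$ for every $s\in S_i$, i.e.\ $a+a_i-S_i\subseteq L$.

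I do not expect any genuine obstacle here: the only points requiring care are verifying that both $\ind\p{a+a_i}$ and $\ind\p{a+a_i+1}$ equal $i$, so that Proposition~\ref{prop:cutoff} governs the trailing digits of these representations, and the elementary observation that $a\in N$ together with $a<a_i$ already forces $a\leq a_i-2$, which is what makes the ``$a+1$'' version of the argument go through.
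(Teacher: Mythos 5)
Your proof is correct and follows essentially the same route as the paper: show $a+a_i$ is a non-volatile zend using the bound $a+a_i<a_{i+1}$ and the digit structure of the representation, then derive the ``in particular'' clause by contradiction from $\p{N+S}\cap N=\emptyset$. The only cosmetic difference is that you re-derive the volatility transfer directly from Proposition~\ref{prop:cutoff} (via the $a+1$ argument) where the paper cites Proposition~\ref{prop:vprez}, and you explicitly flag the $i\geq1$ restriction that the paper's own argument tacitly assumes.
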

\begin{proof}
Let $a_{i+1}>2a_i$, and let $a<a_i$ with $a\in N$.  We have $a+a_i<a_i+a_i=2a_i<a_{i+1}$.  Since both of these inequalities are strict and since all quantities involved are integers, we have $a+a_i<a_{i+1}-1$.  So, by Lemma~\ref{prop:vprez}, since $a$ is non-volatile, $a+a_i$ is non-volatlie.  Also, $a+a_i$ is a zend, since $a$ is a zend and $a_i$ has its only nonzero digit in a higher place than the highest nonzero place in $a$.  Therefore, $a+a_i\in N$, as required.

Now, assume for a contradiction that $a+a_i-S_i\not\subseteq L$.  Then, there exists $s\in S_i$ such that $a+a_i-s\in N$.  Let $n=a+a_i-s$.  We then have $n+s\in N$, which contradicts Lemma~\ref{lem:nsnn}.  Therefore $a+a_i-S_i\subseteq L$, as required.
\end{proof}
\section{Finding an Aperiodic, Bounded Nim Sequence}\label{s:apbd}
\subsection{First Attempt: The Greedy Algorithm}\label{ss:greedy}
We have built up a detailed machine for analyzing aperiodic, bounded Nim sequences of subtraction sets.  Unfortunately, we still have not shown that such subtraction sets exist.  An obvious first attempt to find one is to construct a set by repeatedly adding elements and observing that the Nim sequences of the partial sets behave in a desireable manner.  The basis of this approach will be Algorithm~\ref{alg:greedy}.
\begin{algorithm}
\caption{$EXTEND\_SET\p{S,start,k}$}
\label{alg:greedy}
\begin{algorithmic}
\REQUIRE $k\geq2$ integer
\REQUIRE $S$ finite set of positive integers with $\pref\p{S}=\eps$ and $\max\p{\per\p{S}}\leq k$
\REQUIRE $start>\max\p{S}$ positive integer
\FOR {$i$ \textbf{from} $start$ \TO $\infty$}
   \IF {$\pref\p{S\cup\st{i}}=\eps$ \AND $\per\p{S\cup\st{i}}\neq\per\p{S}$ \AND $\max\p{\per\p{S\cup\st{i}}}\leq k$}
	\RETURN $S\cup\st{i}$
   \ENDIF
\ENDFOR
\end{algorithmic}
\end{algorithm}
\FloatBarrier
This algorithm takes as input a set $S$ with a (purely) periodic Nim sequence bounded by $k$.  It returns a set $S'=S\cup\st{i}$ for some $i\geq start$ such that $S'$ has a (purely) periodic Nim sequence bounded by $k$.  If no such extension of $S$ exists, this algorithm runs forever.

Our main observation is the following.
\begin{prop}\label{prop:limalg}
Let $k\geq2$ be an integer.  Let $\seq{a_i}_{i\geq0}$ be a strictly increasing sequence of positive integers, and for all $i$, let $S_i=\st{a_j\mid j\leq i}$.  Let $S=\st{a_j\mid j\geq0}$.  Furthermore, assume that $a_{i+1}>\ab{\per\p{S_i}}$ for all $i$.  If Algorithm~\ref{alg:greedy} returns $S_{i+1}$ on input $\p{S_i, a_{i+1}, k}$ for all $i$, then the Nim sequence of $S$ equals
\[
\limi{i}{\per\p{S_i}}.
\]
(In particular, this limit exists.)
\end{prop}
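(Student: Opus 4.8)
The plan is to compare the subtraction games $S_i$, $S_{i+1}$ and $S$ position by position. Write $p_i=\ab{\per\p{S_i}}$ and let $w$ denote the Nim sequence of $S$; since the hypotheses entail that the preconditions of Algorithm~\ref{alg:greedy} hold at every step, $\pref\p{S_i}=\eps$ for all $i$, so the Nim sequence of $S_i$ is $\per\p{S_i}^\omega$. The key elementary fact is a \emph{stability lemma}: for every $i$ and every $n<a_{i+1}$ one has $\SG_{S_i}\p{n}=\SG_{S_{i+1}}\p{n}=\SG_S\p{n}$. I would prove this by a one-line induction on $n$: because $\seq{a_i}$ is strictly increasing, every element of $S$ or of $S_{i+1}$ not already in $S_i$ is at least $a_{i+1}$, so for $n<a_{i+1}$ the three games admit exactly the same moves from position $n$ (subtract an element of $S_i\cap\bk{n}$), and those moves lead to positions below $a_{i+1}$, so the three values of $\mex$ agree by the inductive hypothesis.

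This lemma yields two things. Since $p_i<a_{i+1}$ by hypothesis, the prefix $w\left[0..p_i\right)$ equals $\per\p{S_i}$; hence every $\per\p{S_i}$ is an initial segment of the one infinite word $w$, and any two of them agree wherever both are defined. Also, the Nim sequence of $S_i$ agrees with $w$ on $\left[0..a_{i+1}\right)$, and $a_{i+1}\to\infty$, so the Nim sequences of the $S_i$ converge to $w$ in the product topology. Consequently the proposition reduces to showing $p_i\to\infty$: once that holds, the prefixes $\per\p{S_i}=w\left[0..p_i\right)$ have unboundedly increasing length, so $\limi{i}{\per\p{S_i}}$ exists and equals $w$.

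To obtain $p_i\to\infty$ I would in fact prove $p_{i+1}>p_i$ for every $i$. Since the algorithm extends $S_i$ only when the period strictly changes, $\per\p{S_{i+1}}\neq\per\p{S_i}$, so the Nim sequences of $S_i$ and $S_{i+1}$ are distinct; yet by the stability lemma these two purely periodic words agree on $\left[0..a_{i+1}\right)$, a window of length $a_{i+1}>p_i$. If $p_{i+1}\geq a_{i+1}$, then $p_{i+1}>p_i$ at once. Otherwise $p_{i+1}<a_{i+1}$, so $\per\p{S_{i+1}}$ is itself a prefix of $\per\p{S_i}^\omega$ and the shared word on $\left[0..a_{i+1}\right)$ carries both periods $p_i$ and $p_{i+1}$; a Fine--Wilf argument, combined with the primitivity of $\per\p{S_i}$ (it is a minimal period and hence not a proper power), rules out $p_{i+1}<p_i$, and with $p_{i+1}\neq p_i$ this gives $p_{i+1}>p_i$.

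\textbf{The main obstacle} is the last step of this argument in the regime where $a_{i+1}$ exceeds $p_i$ by only a little, so that $\left[0..a_{i+1}\right)$ is too short for Fine--Wilf to apply directly. To close that gap I would bring in the game-theoretic structure of the first position $\delta$ where the Nim sequences of $S_i$ and $S_{i+1}$ disagree: $\delta$ is a $P$-position of $S_i$ that is reached, via the single new move of subtracting $a_{i+1}$, from another $P$-position of $S_i$ --- which is precisely the mechanism by which the period changes. This both localizes $\delta$ to the range $a_{i+1}\leq\delta<a_{i+1}+p_i$ and pins down enough of the periodic pattern of the Nim sequence of $S_i$ near $\delta$ to force the contradiction. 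Making this interplay between the combinatorial rigidity of periodic words and the location of $\delta$ precise is the part I expect to be hardest.
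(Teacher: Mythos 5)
Your overall route is the same as the paper's: observe that for positions $n<a_{i+1}$ only moves from $S_i$ are available, so the Nim sequences of $S_i$, $S_{i+1}$, and $S$ all agree on $\left[0..a_{i+1}\right)$; conclude that each $\per\p{S_i}$ is a prefix of the Nim sequence of $S$; and reduce everything to showing that the period lengths $p_i=\ab{\per\p{S_i}}$ tend to infinity. The paper's proof is essentially your first two paragraphs compressed: it considers the prefix of $\per\p{S_{i+1}}$ of length $p_i$, notes that it must equal $\per\p{S_i}$ by this stability observation, and infers strictness from the algorithm's guarantee that $\per\p{S_{i+1}}\neq\per\p{S_i}$.

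The gap is the one you yourself flag and do not close: nothing in your argument rules out $p_{i+1}<p_i$. Your Fine--Wilf step genuinely fails in that regime: the window on which the two purely periodic Nim sequences are guaranteed to agree has length $a_{i+1}$, which the hypotheses only force to exceed $p_i$, whereas Fine--Wilf needs length at least $p_i+p_{i+1}-\gcd\p{p_i,p_{i+1}}$; a word of length $p_i+1$ can perfectly well carry two periods without carrying their gcd, so primitivity of $\per\p{S_i}$ alone does not finish the job. Your proposed repair --- locating the first disagreement $\delta$ in $\left[a_{i+1},a_{i+1}+p_i\right)$ and exploiting that the new move forces $\SG_{S_i}\p{\delta-a_{i+1}}=\SG_{S_i}\p{\delta}$ while $\SG_{S_i}\p{n-a_{i+1}}\neq\SG_{S_i}\p{n}$ for $a_{i+1}\leq n<\delta$ --- is the right kind of ingredient, but as written it is a declaration of intent rather than an argument, and it sits at precisely the step that carries the whole proposition: if the $p_i$ could oscillate boundedly, the limit need not exist even though every $\per\p{S_i}$ is a prefix of the Nim sequence of $S$. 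For what it is worth, the paper's own proof silently assumes $p_{i+1}\geq p_i$ the moment it speaks of \quot{the prefix of $\per\p{S_{i+1}}$ of length $\ab{\per\p{S_i}}$,} and never addresses the shrinking-period case either; so you have correctly isolated a real subtlety that the paper elides, but isolating it is not the same as resolving it, and your proof is incomplete until that case is actually excluded.
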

\begin{proof}
We will show that $\per\p{S_i}$ is a strict prefix of $\per\p{S_{i+1}}$ for all $n$.  This will suffice to show that the limit exists, and from there, it will be clear that this limit equals the Nim sequence for $S$, since for each position, only some finite subset of $S$ (some $S_i$) will be the legal set of moves.  Consider the prefix of $\per\p{S_{i+1}}$ of length $\ab{\per\p{S_i}}$.  Since $a_{i+1}>\ab{\per\p{S_i}}$, the only legal moves in positions in that prefix correspond to subtractions in $S_i$.  Hence, this prefix exactly equals $\per\p{S_i}$, as required.
\end{proof}
All of our tools so far relate to representation words.  So, we give a modified algorithm, Algorithm~\ref{alg:greedyrep} that requires the resulting Nim sequence to be a representation word.
\begin{algorithm}
\caption{$EXTEND\_REP\_SEQ\p{\seq{a_i}, start}$}
\label{alg:greedyrep}
\begin{algorithmic}
\REQUIRE $\seq{a_i}$ finite representing sequence
\REQUIRE Representation word of $\seq{a_i}$ is the Nim sequence of $T\p{\seq{a_i}}$
\STATE Let $v$ be the representation word of $\seq{a_i}$
\STATE Let $n$ be the maximum index $i$ such that $a_i$ is defined.
\REQUIRE $start>a_n$ positive integer
\FOR {$j$ \textbf{from} $start$ \TO $\infty$}
  \STATE Let $\seq{b_i}=\p{a_0,a_1,\ldots,a_n,j}$
  \STATE Let $w$ be the representation word of $\seq{b_i}$
   \IF {$w\neq v$ and $w$ is the Nim sequence of $T\p{\seq{b_i}}$}
	\RETURN $\seq{b_i}$
   \ENDIF
\ENDFOR
\end{algorithmic}
\end{algorithm}
\FloatBarrier

We are going to attempt to use Algorithms~\ref{alg:greedy} and~\ref{alg:greedyrep} to find an aperiodic ternary Nim sequence.  As a consequence of Theorem~\ref{thm:genferguson}, we will not search for subtraction sets containing $\st{1,2}$.  Nor will we concentrate on $\st{1,3}$, since the Nim sequence of $\st{1,3}$ equals the Nim sequence of $\st{1}$.  From this point forward, the subtraction sets we consider will start with $\st{1,4}$.
%
We make the following observation.
\begin{prop}\label{prop:nodoub}
Let $S$ be the set of positive integers congruent to $1$ modulo $3$, and let $S_i$ denote the subset of $S$ consisting of the smallest $i+1$ elements of $S$.  The Nim sequence of $S_i$ is $\p{01\p{012}^i}^\omega$, and the Nim sequence of $S$ is $01\p{012}^\omega$.
\end{prop}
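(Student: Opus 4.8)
I will prove the statement about the finite truncations $S_i$ first, by induction on $i$, and then deduce the statement about $S$.

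Fix notation: let $L_i=3i+2$ and let $v_i=01\p{012}^i$, a word of length $L_i$; note that $v_i$ is a prefix of $v_{i+1}$, and that the claim is exactly that the Nim sequence of $S_i$ is $\p{v_i}^\omega$. The base case $i=0$ is immediate, since $\SG_{\st 1}\p n=n\modd 2$. For the inductive step, assume the Nim sequence of $S_{i-1}$ equals $\p{v_{i-1}}^\omega$. Since $\min\p{S_i\setminus S_{i-1}}=3i+1>3i-1=\ab{v_{i-1}}$, for every position $n<3i+1$ the legal moves under $S_i$ coincide with those under $S_{i-1}$ and reach only positions that are again $<3i+1$; hence $\SG_{S_i}\p n=\SG_{S_{i-1}}\p n$ there, and a comparison of $\p{v_{i-1}}^\omega$ with $v_i$ on $[0,3i+1)$ (they agree, the last two positions $3i-1$ and $3i$ reading $0$ and $1$) gives $\SG_{S_i}\p n=v_i\bk n$ for all $n<3i+1$. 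Next, I would check $n=3i+1$ by hand: now all of $S_i$ is available, the reachable positions are $0,3,6,\dots,3i$, carrying $\SG$-values $0$ and $1$, so $\SG_{S_i}\p{3i+1}=\mex\st{0,1}=2=v_i\bk{3i+1}$. Finally, for every $n\ge L_i$ all of $S_i$ is available, so by strong induction $\SG_{S_i}\p n=\mex\st{v_i\bk{\p{n-s}\modd L_i}:s\in S_i}$, which depends only on $r:=n\modd L_i$; the inductive step therefore reduces to the following residue identity, which I would verify directly:
\[
\mex\st{v_i\bk{\p{r-s}\modd L_i}:s\in S_i}=v_i\bk r\qquad\text{for all }r\in\st{0,1,\dots,L_i-1}.
\]

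To prove the residue identity I would split the subtractions $s=3j+1\in S_i$ ($0\le j\le i$) according to whether $s\le r$ or $s>r$. Those with $s\le r$ land on the positions $\le r-1$ that are congruent to $r-1$ modulo $3$; those with $s>r$ land, after reducing modulo $L_i$ (a shift by $L_i\equiv 2\pmod 3$), on the positions $\ge r+1$ that are congruent to $r+1$ modulo $3$; moreover the two groups together sweep out \emph{all} of those two progressions inside $[0,L_i)$. Using the explicit description $v_i\bk 0=0$, $v_i\bk 1=1$, and $v_i\bk p=\p{p+1}\modd 3$ for $2\le p<L_i$, a short case analysis on $r\modd 3$ (with $r\in\st{0,1,2}$ and $r\in\st{3i-1,3i,3i+1}$ handled as boundary subcases) then shows that the set of reachable $\SG$-values is always one of $\st{1,2}$, $\st 1$, $\st 0$, $\st{0,2}$, $\st{0,1}$, and that in every case its $\mex$ equals $v_i\bk r$.

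For $S$ itself I would invoke Proposition~\ref{prop:limalg} with $k=2$ and the increasing enumeration $S_0\subset S_1\subset\cdots$: by the above each $S_i$ has purely periodic Nim sequence, so $\pref\p{S_i}=\eps$ and $\per\p{S_i}=v_i$ (one checks that $\p{v_i}^\omega$ has minimal period $L_i$), with $\max\p{\per\p{S_i}}=2$; and since $\ab{v_i}=L_i<3i+4=\min\p{S_{i+1}\setminus S_i}$ and $\p{v_i}^\omega\ne\p{v_{i+1}}^\omega$, Algorithm~\ref{alg:greedy} returns $S_{i+1}$ on input $\p{S_i,\,3i+4,\,2}$. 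Hence the Nim sequence of $S$ equals $\limi i{\per\p{S_i}}=\limi i{v_i}=01\p{012}^\omega$. (Alternatively one can treat $S$ directly: its $P$-positions are exactly $\st 0\cup\st{n\ge 1:n\equiv 2\pmod 3}$, which one verifies since this set plus $S$ meets only the residue classes $0$ and $1$ modulo $3$ while avoiding $0$, and since from every residue-$1$ position ``subtract $n$'' and from every residue-$0$ position ``subtract $1$'' reaches a $P$-position; Ferguson's Theorem then locates the $1$'s, and the remaining positions --- those $n\equiv 1\pmod 3$ with $n\ge 4$ --- reach only $\SG$-values $0$ and $1$, hence have $\SG$-value $2$.)

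The step I expect to be the main obstacle is the residue identity: the computation is elementary but carries several small boundary cases, since positions $0$ and $1$ of $v_i$ do not obey the period-$3$ pattern of the rest of the word, and the ``wrap-around'' group of subtractions may be empty or may reach position $1$. Everything else is routine.
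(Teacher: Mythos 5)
Your proof is correct, but it takes a genuinely different route from the paper's. The paper treats this proposition as an application of its representation-word machinery: it exhibits $01\p{012}^\omega$ as the representation word of the representing sequence $1,2,5,8,11,\ldots$, identifies the non-volatile zends $N=\st{0}\cup\st{n\geq1 : n\equiv2\p{\modd 3}}$, checks the single congruence $\p{N+S}\cap N=\emptyset$, and then invokes Theorem~\ref{thm:absmain} for the infinite case and its finite-truncation clause (verifying $a+a_i-s\not\equiv 2\p{\modd 3}$) for the sets $S_i$ --- the whole proof is a few lines once that theorem is available. You instead give a self-contained induction on $i$ with an explicit residue/mex identity for each period length $L_i=3i+2$, and handle the infinite case via Proposition~\ref{prop:limalg} (your parenthetical direct argument for $S$ --- locating the $P$-positions, applying Ferguson's Theorem for the $1$'s, and bounding the remaining values by $2$ --- is essentially the skeleton of the paper's Theorem~\ref{thm:absmain} specialized to this example). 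Your approach buys independence from Sections~\ref{s:traceable}'s apparatus at the cost of the boundary-case bookkeeping you flag at the end; the paper's buys brevity and, more importantly, illustrates the general theorem it has just built, which is the point of this proposition in the paper's narrative. The steps you leave as ``a short case analysis'' do check out (the reachable-value sets are exactly the ones you list), and the minimality of the period $v_i$, which you need for $\per\p{S_i}=v_i$ in the Proposition~\ref{prop:limalg} application, holds for $i\geq1$ because the positions of the $2$'s in $\p{v_i}^\omega$ are not invariant under any shorter shift.
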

\begin{proof}
Let $\seq{a_i}$ be the representing sequence defined by
\[
a_i=\begin{cases}
1 & i=0\\
3i-1 & i>0.
\end{cases}
\]
Notice that $01\p{012}^\omega$ is the representation word of $\seq{a_i}$.  The non-volatile zends are the numbers congruent to $2$ mod $3$, along with $0$.  Denote this set by $N$.  We see that $\p{N+S}\cap N=\emptyset$.  Therefore, by Theorem~\ref{thm:absmain}, the Nim sequence of $S$ is $01\p{012}^\omega$.

To show that the Nim sequence of $S_i$ is $\p{01\p{012}^i}^\omega$, we must show that for all $a<a_i$ with $a\in N$ we have $a+a_i-S_{i-1}\nsubseteq N$.  Notice that if $a\in N$, then $a\not\equiv1\p{\modd3}$.  If $i=0$, then $S_{i-1}$ is empty, so $a+a_i-S_{i-1}\nsubseteq N$, as required.  If $i>0$, then we have for all $s\in S_{i-1}$,
\[
a+a_i-s\not\equiv1+2-1\p{\modd3}\equiv2\p{\modd3}.
\]
Hence, $a+a_i-s\notin N$, since $a+a_i-s\neq0$.  Therefore, the Nim sequence of $S_i$ is $\p{01\p{012}^i}^\omega$ by Theorem~\ref{thm:absmain}, as required.
\end{proof}
Notice that the set $S$ in Proposition~\ref{prop:nodoub} satisfies Proposition~\ref{prop:limalg}.  This presents an irritating conundrum.  We wanted to use Algorithm~\ref{alg:greedy} to find an aperiodic ternary Nim sequence.  Instead, while the periods never stabilized, we obtained something with an eventual period that we had never encountered before.  To combat this, we have the following proposition.
\begin{prop}\label{prop:aperdoub}
Let $k\geq2$ be an integer.  Let $\seq{a_i}_{i\geq0}$ be a strictly increasing sequence of positive integers, and for all $i$, let $S_i=\st{a_j\mid j\leq i}$.  Let $S=\st{a_j\mid j\geq0}$.  Furthermore, assume that $a_{i+1}>2\ab{\per\p{S_i}}$ for all $i$.  If these parameters satisfy Proposition~\ref{prop:limalg}, then the Nim sequence returned by Algorithm~\ref{alg:greedy} will be aperiodic.
\end{prop}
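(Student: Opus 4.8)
The plan is a proof by contradiction, exploiting the fact that the stronger gap condition $a_{i+1} > 2\ab{\per\p{S_i}}$ forces the Nim sequence of $S$ to have arbitrarily long \emph{square} prefixes, whereas an eventually periodic word cannot have such prefixes past a certain length.

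First I would collect the ingredients. By Proposition~\ref{prop:limalg}, the Nim sequence of $S$ is $w = \limi{i}{\per\p{S_i}}$, and from its proof $\per\p{S_i}$ is a \emph{proper} prefix of $\per\p{S_{i+1}}$, so the lengths $\ell_i := \ab{\per\p{S_i}}$ are strictly increasing and $\ell_i \to \infty$. Moreover, since Algorithm~\ref{alg:greedy} both requires and outputs sets with empty prefix, $\pref\p{S_i} = \eps$, so the Nim sequence of $S_i$ is exactly $\per\p{S_i}^\omega$. Next, because $S \setminus S_i = \st{a_{i+1}, a_{i+2}, \ldots}$, every legal move from a position below $a_{i+1}$ is the same in the game on $S$ as in the game on $S_i$; an induction on the position (as in the proof of Proposition~\ref{prop:limalg}) shows that $\SG_S$ and $\SG_{S_i}$ agree on all positions below $a_{i+1}$. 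Since $a_{i+1} > 2\ell_i$, this gives
\[
w\left[0..2\ell_i\right) = \per\p{S_i}^\omega\left[0..2\ell_i\right) = \per\p{S_i}\,\per\p{S_i},
\]
i.e.\ $w\bk{n} = w\bk{n+\ell_i}$ for all $0 \le n < \ell_i$: for every $i$, $w$ begins with a square of a block of length $\ell_i$.

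Now suppose, for contradiction, that $w$ is eventually periodic, and write $w = uv^\omega$ with $\ab{u} = p$ the minimal pre-period and $\ab{v} = q$ the minimal period, so that $v$ is primitive and $w\bk{n} = w\bk{n+q}$ for all $n \ge p$. Fix $i$ with $\ell_i \ge p + q$. The factor $w\left[p..2\ell_i\right)$ has length $2\ell_i - p \ge \ell_i + q$, it has period $q$ (being a factor of $v^\omega$), and it has period $\ell_i$ (from the square prefix). Since $\ell_i + q$ is at least the Fine--Wilf overlap bound $q + \ell_i - \gcd\p{q,\ell_i}$, this factor has period $\gcd\p{q,\ell_i}$; as its prefix $w\left[p..p+q\right) = v$ then inherits a period dividing $q$ and $v$ is primitive, we must have $\gcd\p{q,\ell_i} = q$, i.e.\ $q \mid \ell_i$. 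Consequently, for $n \ge p$ we get $w\bk{n} = w\bk{n+q} = \cdots = w\bk{n+\ell_i}$, while for $0 \le n < p \le \ell_i$ the square prefix already gives $w\bk{n} = w\bk{n+\ell_i}$; hence $w\bk{n} = w\bk{n+\ell_i}$ for all $n$, so $w = \per\p{S_i}^\omega$ is purely periodic. Running the identical argument for $i+1$ gives $w = \per\p{S_{i+1}}^\omega$; but $\per\p{S_i}$ and $\per\p{S_{i+1}}$ are then both the unique minimal-length period of the same purely periodic word, so $\per\p{S_i} = \per\p{S_{i+1}}$ — contradicting that the former is a proper prefix of the latter. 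Therefore $w$ is not eventually periodic.

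I expect the main obstacle to be the step turning the coexistence of the two periods $q$ and $\ell_i$ into the divisibility $q \mid \ell_i$: this needs the square prefix to be long enough to defeat the Fine--Wilf overlap bound, which is exactly what the hypothesis $a_{i+1} > 2\ab{\per\p{S_i}}$ — rather than the weaker $a_{i+1} > \ab{\per\p{S_i}}$ used in Proposition~\ref{prop:limalg} — is there to supply. The rest is routine bookkeeping with prefixes and the definitions of $\pref$ and $\per$.
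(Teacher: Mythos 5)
Your proof is correct, and it shares its first half with the paper's: both arguments use the hypothesis $a_{i+1}>2\ab{\per\p{S_i}}$ to show that the limit word $w$ begins with the square $\per\p{S_i}^2$ for every $i$, and then extract a contradiction from combinatorics on words. Where you diverge is in the second half. The paper packages that step as Lemma~\ref{lem:aperdoub}, a bespoke statement about words built recursively as $w_i=w_{i-1}^2v_i$, proved by a minimality/conjugacy argument (writing $w_i=pu^mu'$ and forcing $u'p$ to be a power of $u$ and hence $p=\eps$); the conclusion is that an eventually periodic limit would be purely periodic with each tail $v_i$ a power of the period, and the contradiction then comes from the algorithm's requirement that $\per\p{S_{i+1}}\neq\per\p{S_i}$. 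You instead apply the Fine--Wilf theorem to the factor $w\left[p..2\ell_i\right)$, where $\ell_i=\ab{\per\p{S_i}}$, which carries both the eventual period $q$ and the square-induced period $\ell_i$; its length $2\ell_i-p\geq\ell_i+q$ clears the overlap bound, primitivity of the minimal period gives $q\mid\ell_i$, and hence $w=\per\p{S_i}^\omega$ for every large $i$, contradicting the strict growth of the $\ell_i$. Your route needs only that $w$ has arbitrarily long square prefixes rather than the full recursive structure $w_{i+1}=w_i^2v_{i+1}$, and it replaces the paper's rather terse minimality step with a standard, citable tool; the paper's lemma, in exchange, yields the extra information that the tails $v_i$ are powers of the period. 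The one presentational gap is that Fine--Wilf is an external result not stated in the paper, so you should cite it explicitly.
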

To prove this, we use the following lemma.
\begin{lemma}\label{lem:aperdoub}
Let $\st{v_i}_{i\geq1}$ be a sequence of words in $\Sigma^*$.  Let $w_0\in\Sigma^*$, and for all $i\geq1$, let $w_i=w_{i-1}^2v_i$.  Let
\[
w=\limi{i}{w_i}.
\]
(This limit exists because each $w_i$ is a prefix of the next.)  If $w$ is eventually periodic, then $w$ is purely periodic, and for sufficiently large $n$, $v_i$ is a power of the period of $w$.
\end{lemma}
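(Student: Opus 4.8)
The plan is to exploit the self-overlapping structure $w_i = w_{i-1}^2 v_i$ together with the Fine--Wilf theorem (periodicity lemma). First I would set up notation: suppose $w$ is eventually periodic, so there exist a threshold $N_0$ and a period length $p$ such that $w[n] = w[n+p]$ for all $n \geq N_0$. Choose $i$ large enough that $|w_{i-1}| \geq N_0 + p$; then the suffix of $w_i$ from position $N_0$ onward is $p$-periodic, and in particular the block $w_{i-1}^2$, viewed starting from position $|w_{i-1}|$ (its second copy), sits entirely inside this periodic region once $i$ is large. The key observation is that inside $w_i$ we see the factor $w_{i-1} w_{i-1}$, i.e. $w_{i-1}$ occurs at position $0$ and again at position $|w_{i-1}|$, so $w_{i-1}^2$ has period $|w_{i-1}|$; but for large $i$ a long suffix of it also has period $p$. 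Applying Fine--Wilf to an overlap of length at least $|w_{i-1}| + p$ (which holds because $|w_{i-1}^2| = 2|w_{i-1}| \geq |w_{i-1}| + p$ once $|w_{i-1}| \geq p$) yields that this suffix has period $\gcd(|w_{i-1}|, p)$, which divides $p$.

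Next I would bootstrap from "a long suffix of $w$ is $p$-periodic" to "all of $w$ is $p$-periodic." Because $w_{i-1}$ is a prefix of $w$ and $w_{i-1}$ also occurs at position $|w_{i-1}|$ in $w$, the prefix $w_{i-1}$ of $w$ equals $w[|w_{i-1}| \, .. \, 2|w_{i-1}|)$, a factor lying (for large $i$) in the $p$-periodic tail; hence the prefix $w_{i-1}$ of $w$ is itself $p$-periodic. Letting $i \to \infty$, every prefix of $w$ is $p$-periodic, so $w$ is purely periodic with period length $p$ (or a divisor of it); call the primitive period $u$, so $w = u^\omega$. Then $w_{i-1}$, being a prefix of $u^\omega$ whose length I may assume is a multiple of $|u|$ after passing to a subsequence — or more carefully, since $w_{i-1}$ is $p$-periodic and $p$ is a multiple of $|u|$, $w_{i-1}$ is a prefix of $u^\omega$ — and $w_i = w_{i-1}^2 v_i$ is also a prefix of $u^\omega$. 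From $w_{i-1}^2$ being a prefix of $u^\omega$ and $w_{i-1}$ being a prefix of $u^\omega$, a short argument shows $|w_{i-1}|$ is a multiple of $|u|$; then comparing $w_i = w_{i-1}^2 v_i$ against $u^\omega$ forces $v_i$ to be a power of $u$, i.e. a power of the period of $w$.

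The main obstacle I expect is the bookkeeping needed to make "for sufficiently large $i$" precise everywhere and to pin down the primitivity/length-divisibility claims — specifically, showing that once $w$ is purely periodic, $|w_{i-1}|$ must be divisible by $|u|$ (this uses that $w_{i-1}^2$ is a prefix of $u^\omega$ and $w_{i-1}$ is a prefix of $u^\omega$: writing $|w_{i-1}| = q|u| + r$ with $0 \leq r < |u|$, the second copy of $w_{i-1}$ starts at an offset $r$ into a fresh $u$-block, and matching it against the prefix $w_{i-1}$, which starts at offset $0$, forces $u$ to have period $r$, hence $r = 0$ by primitivity). Once that is in place, the conclusion that $v_i = w_i w_{i-1}^{-2}$ is a power of $u$ is immediate since all three of $w_i$, $w_{i-1}^2$ are prefixes of $u^\omega$ with lengths divisible by $|u|$. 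I would state Fine--Wilf explicitly as the one external tool, since the rest is elementary combinatorics on words consistent with Section~\ref{ss:words}.
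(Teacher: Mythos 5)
Your proof is correct, and it reaches the same conclusion by a somewhat different route than the paper. The paper decomposes $w=pu^\omega$ with $p$ and $u$ of minimal length, writes $w_i=pu^mu'$ for $u'$ a proper prefix of $u$, squares, and compares against $w_{i+1}=pu^{m'}u''$ to conclude that $u'p$ is a power of $u$ and hence that $p=\eps$ and $u'=\eps$; the remaining claims then fall out of the resulting factorizations $w_i=u^m$, $w_{i+1}=u^{2m}v_{i+1}$. You instead work with index-wise periodicity: the second copy of $w_{i-1}$ inside $w_i=w_{i-1}^2v_i$ lies entirely in the eventually periodic tail once $\ab{w_{i-1}}\geq N_0$, so the word $w_{i-1}$ --- which is also the prefix of $w$ --- inherits the period, and letting $i\to\infty$ gives pure periodicity; Fine--Wilf then handles the synchronization showing $\ab{w_{i-1}}$ is a multiple of the primitive period length. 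The underlying idea is identical in both proofs (the squaring copies the periodic tail back onto the prefix), but your version makes explicit, via Fine--Wilf and the divisibility argument for $\ab{w_{i-1}}=q\ab{u}+r$, a synchronization step that the paper compresses into the phrase \quot{by minimality of $u$, we have that $u'p$ is a power of $u$}; in that sense your write-up is the more rigorous of the two. Two small remarks: the Fine--Wilf application in your first paragraph (deducing period $\gcd\p{\ab{w_{i-1}},p}$ for a suffix of $w_{i-1}^2$) is never used afterwards --- the bootstrap in your second paragraph does all the work for pure periodicity and needs no periodicity lemma --- so it could be cut; and in the divisibility step, \quot{$u$ has period $r$, hence $r=0$ by primitivity} needs the extra Fine--Wilf observation (which you do have available) that a word with periods $r$ and $\ab{u}$ and sufficient length has period $\gcd\p{r,\ab{u}}$, since a primitive word can have a nontrivial period that fails to divide its length.
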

\begin{proof}
Assume that $w$ is eventually periodic.  Then, $w=pu^\omega$ for some $p,u\in\Sigma^*$.  We can assume that we have taken $p$ and $u$ to be of minimal length.  For sufficiently large $i$, $w_i=pu^mu'$ for some $m\geq1$ and $u'$ a proper prefix of $u$.  Then,
\[
w_{i+1}=w_i^2v_{i+1}=\p{pu^mu'}^2v_{i+1}=pu^mu'pu^mu'v_{i+1}.
\]
We also have that $w_{i+1}=pu^{m'}u''$ for some $m'\geq1$ and $u''$ a proper prefix of $u$.  Hence, by minimality of $u$, we have that $u'p$ is a power of $u$.  So, by minimality of $p$, we have $p=\eps$, so $u'=\eps$ (as if $p\neq\eps$, then we could shift $u$ one index to the left while shortening $p$).  This implies that $w$ is purely periodic.  Furthermore, it implies that $w_i=u^m$ and $w_{i+1}=u^{m'}=u^{2m}v_{i+1}$.  Therefore, $v_{i+1}$ is a power of the period $u$. This holds for sufficiently large $i$, as required.
\end{proof}
We will now prove Proposition~\ref{prop:aperdoub}.
\begin{proof}
We will show that $\per\p{S_i}^2$ is a strict prefix of $\per\p{S_{i+1}}$ for all $i$.   This, along with the detalis of Algorithm~\ref{alg:greedy} and Lemma~\ref{lem:aperdoub}, will imply that the limit Nim sequence is aperiodic.  Consider the prefix of $\per\p{S_{i+1}}$ of length $2\ab{\per\p{S_i}}$.  Since $a_{i+1}>2\ab{\per\p{S_i}}$, the only legal moves in positions in that prefix correspond to subtractions in $S_i$.  Hence, this prefix exactly equals $\per\p{S_i}^2$, as required.
\end{proof}

Now, what happens if we try to extend $\st{1,4}$ greedily, using Algorithm~\ref{alg:greedy} as a depth first search, making sure we double the period every time?  The sequence we obtain is
\[
1, 4, 12, 28, 73, 163, 343, 867, 1915, 4011, 8203,
\]
and the next term is at most (and probably equal to) $16587$.  There is not yet a proof that this sequence can be extended forever without backtracking, but evidence indicates that it likely can be and that the resulting sequence approximately doubles with each successive term.

We notice that Algorithm~\ref{alg:greedyrep} also yields this subtraction set when seeded with input $\seq{1}$ (and when the next term is required to at least double each time).  Let $\seq{a_i}$ denote the representation sequence of the word obtained via this algorithm, which begins
\[
1, 2, 5, 13, 29, 74, 164, 344, 868, 1916, 4012, 8204,
\]
and probably $16588$.  We notice the following properties of this sequence:
\[
\begin{cases}
a_i=2a_{i-1} & i=1\\
a_i=2a_{i-1}+1 & i=2\\
a_i=2a_{i-1}+3 & i=3,4\\
a_i=2a_{i-1}+16 & i=5,6,7\\
a_i=2a_{i-1}+180 & i=8,9,10,11\text{ (and probably $12$)}\\
\cdots
\end{cases}
\]
As of the time of writing this paper, neither the subtraction set sequence nor the representing sequence we obtained was in OEIS.  Also, neither were the sequence $2,4,17,181$ (with or without proper multiplicity) nor the sequence $0,1,3,16,180$ (with or without multiplicity).  But, we notice that $1=0+1$, $3=1+2$, $16=3+13$, and $180=16+164$.  In all cases, the first term in the sum is the previous term in the sequence, and the second term is in our representing sequence.

Also, we notice that for all these terms for all $i\geq2$, either $a_i=3a_{i-1}-2a_{i-2}$ or $a_i=3a_{i-1}-a_{i-2}$.
In the next subsection, we will build a sequence based on these pattern and analyze its properties.
\subsection{Second Attempt: Ternary Word Construction}\label{ss:ternary}
We begin this section by defining a sequence of ternary words.  Let $w_1=01$, and for all $i\geq2$, let
\[
w_i=w_{i-1}^2\p{\prod_{j=1}^{i-2}w_{i-1-j}}2.
\]
Let
\[
w=\limi{i}{w_i}.
\]
Since each $w_i$ is a prefix of $w_{i+1}$, this limit is well-defined.  Note that $w$ is aperiodic by Lemma~\ref{lem:aperdoub}.  Let $\seq{a_i}$ be the representing sequence defined by $a_0=1$ and $a_i=\ab{w_i}$ for all $i\geq1$.  By Theorem~\ref{thm:fergtracegen}, $w$ is the representation word of $\seq{a_i}$, and $w$ is strongly Fergusonian.
%
%

We will now obtain a characterization of this sequence.  This characterization will involve Fibonacci numbers.  Let $F\p{n}$ denote the $n^{th}$ Fibonacci number.  We will use the following recurrences at various points in this subsection:
\begin{lemma}\label{lem:fibrec}
The following recurrences hold regarding Fibonacci numbers:
\begin{enumerate}
\item\label{rec:0} For all $n\geq2$,
\[
2F\p{n}=F\p{n+1}+F\p{n-2}.
\]
\item\label{rec:1} For all $n\geq1$,
\[
F\p{2n+1}=2F\p{2n-1}+F\p{2n-2}.
\]
\item\label{rec:2} For all $n\geq2$,
\[
F\p{2n+1}=3F\p{2n-1}-F\p{2n-3}.
\]
\item\label{rec:3} For all $n\geq0$,
\[
F\p{2n}=\sum_{i=0}^{n-1}F\p{2i+1}.
\]
\item\label{rec:4} For all $n\geq0$,
\[
F\p{2n+1}=1+\sum_{i=1}^{n}F\p{2i}.
\]
\item\label{rec:5} For all $n\geq1$,
\[
F\p{2n+1}=F\p{2n-1}+\sum_{i=0}^{n-1}F\p{2i+1}.
\]
\end{enumerate}
\end{lemma}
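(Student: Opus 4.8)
The plan is to prove the six identities in a dependency order: recurrences~\ref{rec:0}--\ref{rec:2} are purely algebraic consequences of the defining relation $F(m)=F(m-1)+F(m-2)$, the two telescoping identities~\ref{rec:3} and~\ref{rec:4} are quick inductions on $n$, and~\ref{rec:5} follows by combining~\ref{rec:3} with the defining relation. Throughout I would use only $F(0)=0$ and $F(1)=1$; Binet's formula is not needed.

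For recurrence~\ref{rec:0}, I would write $F(n+1)=F(n)+F(n-1)$ and $F(n-1)=F(n)-F(n-2)$ (both valid for $n\geq 2$), substitute the second into the first to get $F(n+1)=2F(n)-F(n-2)$, and rearrange. Recurrence~\ref{rec:1} is the instance of~\ref{rec:0} with $n$ replaced by $2n$ (legitimate since $n\geq 1$): from $2F(2n)=F(2n+1)+F(2n-2)$ together with $2F(2n)=2F(2n-1)+2F(2n-2)$ one cancels $F(2n-2)$ to obtain $F(2n+1)=2F(2n-1)+F(2n-2)$. Recurrence~\ref{rec:2} then follows by substituting $F(2n-2)=F(2n-1)-F(2n-3)$ into~\ref{rec:1}, which gives $F(2n+1)=3F(2n-1)-F(2n-3)$; the hypothesis $n\geq 2$ is exactly what makes $2n-3$ a legal index.

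For the two summation identities I would induct on $n$. In~\ref{rec:3} the base case $n=0$ is $F(0)=0$ (empty sum) and the inductive step is $\sum_{i=0}^{n}F(2i+1)=F(2n)+F(2n+1)=F(2n+2)$. In~\ref{rec:4} the base case $n=0$ is $F(1)=1$ and the step is $1+\sum_{i=1}^{n+1}F(2i)=F(2n+1)+F(2n+2)=F(2n+3)$. Finally, recurrence~\ref{rec:5} is immediate: by~\ref{rec:3} the sum $\sum_{i=0}^{n-1}F(2i+1)$ equals $F(2n)$, so the right-hand side of~\ref{rec:5} is $F(2n-1)+F(2n)=F(2n+1)$.

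There is no genuine crux here; all six identities are classical, so the proof is a short verification rather than an argument with a hard step. The only point that requires a moment's care is ensuring that every Fibonacci index occurring in a given line is nonnegative under the stated bound on $n$, which is precisely why~\ref{rec:0} and~\ref{rec:2} are stated for $n\geq 2$ while~\ref{rec:1} and~\ref{rec:5} need only $n\geq 1$ and~\ref{rec:3},~\ref{rec:4} hold for all $n\geq 0$.
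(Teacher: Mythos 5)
Your proof is correct: all six identities are verified by straightforward use of the defining recurrence $F(n)=F(n-1)+F(n-2)$, with the index bounds handled properly. The paper omits these proofs entirely, stating only that they are "routine applications of the fundamental Fibonacci recurrence," which is precisely the approach you carry out.
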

The proofs of all of these lemmas are routine applications of the fundamental Fibonacci recurrence ($F\p{n}=F\p{n-1}+F\p{n-2}$), so we omit these proofs.  We now give the characterization of $\seq{a_i}$.
\begin{prop}\label{prop:wlen}
For all $i\geq1$, $\ab{w_i}=a_i=F\p{2i+1}$.
\end{prop}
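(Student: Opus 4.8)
The plan is to prove the closed form $\ab{w_i}=F(2i+1)$ by induction on $i$, unwinding the recursive definition $w_i=w_{i-1}^2\p{\prod_{j=1}^{i-2}w_{i-1-j}}2$ into a length identity and then recognizing the resulting Fibonacci recurrence. Taking lengths of both sides gives, for $i\geq2$,
\[
\ab{w_i}=2\ab{w_{i-1}}+\sum_{j=1}^{i-2}\ab{w_{i-1-j}}+1=2\ab{w_{i-1}}+\sum_{\ell=1}^{i-2}\ab{w_\ell}+1,
\]
after reindexing $\ell=i-1-j$ (so $\ell$ runs from $1$ to $i-2$). I would first check the base case $i=1$ directly: $w_1=01$ has length $2=F(3)$. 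For the inductive step, assume $\ab{w_\ell}=F(2\ell+1)$ for all $\ell<i$; substituting into the displayed identity yields
\[
\ab{w_i}=2F(2i-1)+\sum_{\ell=1}^{i-2}F(2\ell+1)+1.
\]

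The remaining task is purely a Fibonacci computation, and this is where I would invoke Lemma~\ref{lem:fibrec}. By recurrence~\ref{rec:4} of that lemma (with $n=i-2$), $1+\sum_{\ell=1}^{i-2}F(2\ell+1)=1+\sum_{\ell=1}^{i-2}F(2\ell+1)$; more directly, recurrence~\ref{rec:5} (with $n$ chosen so that the indices match, namely $n=i-1$) gives $F(2i-1)=F(2i-3)+\sum_{\ell=0}^{i-2}F(2\ell+1)$, hence $\sum_{\ell=1}^{i-2}F(2\ell+1)=F(2i-1)-F(2i-3)-F(1)=F(2i-1)-F(2i-3)-1$. Plugging this in,
\[
\ab{w_i}=2F(2i-1)+F(2i-1)-F(2i-3)-1+1=3F(2i-1)-F(2i-3),
\]
and by recurrence~\ref{rec:2} (with the appropriate value of $n$) this equals $F(2i+1)$, completing the induction. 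One should double-check the small cases $i=2,3$ separately since the sum $\sum_{\ell=1}^{i-2}$ is empty when $i=2$; there the identity reads $\ab{w_2}=2\ab{w_1}+1=5=F(5)$, which is correct.

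The main obstacle here is not conceptual but bookkeeping: getting the reindexing of the product $\prod_{j=1}^{i-2}w_{i-1-j}$ right and then selecting exactly the right instances of the Fibonacci recurrences from Lemma~\ref{lem:fibrec} so that the telescoping of partial sums matches the shifted indices. In particular, care is needed with the off-by-one in the upper summation limit and with the fact that $F(1)=1$ cancels the trailing $+1$ coming from the final symbol $2$ in $w_i$. Once the indices are aligned, recurrences~\ref{rec:5} and~\ref{rec:2} do all the work, so the proof reduces to a short verified computation plus the two small base cases.
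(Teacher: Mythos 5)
Your proof is correct and follows essentially the same route as the paper: take lengths in the recursive definition of $w_i$ and reduce to a Fibonacci identity from Lemma~\ref{lem:fibrec}. The paper is slightly more direct—it rewrites the length recurrence as $a_i=a_{i-1}+\sum_{j=0}^{i-1}a_j$ and observes this is literally recurrence~\ref{rec:5}, whereas you substitute the inductive hypothesis and route through recurrences~\ref{rec:5} and~\ref{rec:2}—but the substance is identical (and your stray invocation of recurrence~\ref{rec:4} is a harmless tautology you immediately discard).
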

\begin{proof}
Notice that $a_1=2=F\p{3}$.  Now, for $i\geq2$ we have
\[
a_i=a_{i-1}+1+\sum_{j=1}^{i-1}a_j
=a_{i-1}+\sum_{j=0}^{i-1}a_j.
\]
This is recurrence~\ref{rec:5} in Lemma~\ref{lem:fibrec}, so $a_i=F\p{2i+1}$, as required.
\end{proof}

Let $\seq{b_i}_{i\geq0}$ be the sequence where $b_0=0$ and $b_i=a_{i-1}+b_{i-1}$ for all $i>1$.  We have the following proposition.
\begin{prop}\label{prop:sumproperty}
For all $i\geq1$, $a_i=2a_{i-1}+b_{i-1}$.
\end{prop}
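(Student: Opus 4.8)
The plan is to prove the identity $a_i = 2a_{i-1} + b_{i-1}$ by translating both sides into closed form via the Fibonacci characterizations already available, and then invoking the Fibonacci recurrences collected in Lemma~\ref{lem:fibrec}. By Proposition~\ref{prop:wlen} we know $a_i = F(2i+1)$ for all $i \geq 1$, so the right-hand side becomes $2F(2i-1) + b_{i-1}$, and the task reduces to showing $b_{i-1} = F(2i+1) - 2F(2i-1)$, i.e.\ that $b_j = F(2j+3) - 2F(2j+1)$ for all $j \geq 0$ (writing $j = i-1$). So the real content is identifying $\seq{b_i}$ in closed form.

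First I would compute $b_j$ directly from its defining recurrence $b_0 = 0$, $b_i = a_{i-1} + b_{i-1}$. Unrolling gives $b_i = \sum_{j=0}^{i-1} a_j = a_0 + \sum_{j=1}^{i-1} a_j = 1 + \sum_{j=1}^{i-1} F(2j+1)$. Now recurrence~\ref{rec:4} in Lemma~\ref{lem:fibrec} states $F(2n+1) = 1 + \sum_{i=1}^{n} F(2i)$, which is not immediately the sum I have; instead I would use that $1 + \sum_{j=1}^{n} F(2j+1) = \sum_{j=0}^{n} F(2j+1)$ since $F(1) = 1$, and then recurrence~\ref{rec:3}, $F(2n) = \sum_{i=0}^{n-1} F(2i+1)$, gives $\sum_{j=0}^{n} F(2j+1) = F(2n+2)$. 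Hence $b_i = F(2i)$ for all $i \geq 0$ (checking $b_0 = 0 = F(0)$ as the base case). Actually, a cleaner route: since $a_i = a_{i-1} + \sum_{j=0}^{i-1} a_j = a_{i-1} + b_i$ by the very computation in the proof of Proposition~\ref{prop:wlen}, we immediately get $b_i = a_i - a_{i-1} = F(2i+1) - F(2i-1) = F(2i)$ by the fundamental Fibonacci recurrence applied twice (or directly, $F(2i+1) - F(2i-1) = F(2i)$).

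With $b_{i-1} = F(2i-2)$ in hand, the claimed identity becomes $F(2i+1) = 2F(2i-1) + F(2i-2)$, which is exactly recurrence~\ref{rec:1} in Lemma~\ref{lem:fibrec} (with $n = i-1$, valid for $i \geq 2$; the case $i = 1$ is $a_1 = 2 = 2a_0 + b_0 = 2 + 0$, checked separately). I do not anticipate a genuine obstacle here — the only mild care needed is matching index ranges so that the Fibonacci recurrences from Lemma~\ref{lem:fibrec} apply (they require $n \geq 1$ or $n \geq 2$), and handling the small cases $i = 1$ (and possibly $i = 2$) by hand. The bookkeeping between $\seq{a_i}$, $\seq{b_i}$, and the Fibonacci indices is the part most prone to off-by-one slips, so I would state the closed forms $a_i = F(2i+1)$ and $b_i = F(2i)$ explicitly before combining them.
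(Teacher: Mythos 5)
Your proof is correct and follows essentially the same route as the paper: unroll $b_i=\sum_{j=0}^{i-1}a_j$, identify $b_i=F\p{2i}$ via Proposition~\ref{prop:wlen} and recurrence~\ref{rec:3} of Lemma~\ref{lem:fibrec}, then conclude with recurrence~\ref{rec:1}. (One trivial slip: the final identity $F\p{2i+1}=2F\p{2i-1}+F\p{2i-2}$ is recurrence~\ref{rec:1} with $n=i$, not $n=i-1$, so it already covers $i\geq1$.)
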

\begin{proof}
The recurrence $b_i=a_{i-1}+a_{i-1}$ implies that
\[
b_i=\sum_{j=0}^{i-1}a_j.
\]
By Proposition~\ref{prop:wlen} and recurrence~\ref{rec:3} in Lemma~\ref{lem:fibrec}, we see that $b_i=F\p{2i}$.  By recurrence~\ref{rec:1} in Lemma~\ref{lem:fibrec}, we see that $a_i=2a_{i-1}+b_{i-1}$, as required.
\end{proof}
Recall that our greedy representing sequence had a similar property to that relating $\seq{a_i}$ to $\seq{b_i}$, except that the values in $\seq{b_i}$ sometimes did not change.  Here, we forced these values to change every iteration.  This was equivalent to choosing the recurrence $a_i=3a_{i-1}-a_{i-2}$ at every step.  This consistent choice of recurrence causes our sequences to behave much more predictably.

We will now obtain another characterization of $w$, this time
in terms of morphisms.
First, we need a lemma.
\begin{lemma}\label{lem:phione}
Let $\varphi:\st{0,1}^*\to\st{0,1}^*$ be the morphism defined by $\varphi\p{0}=001$ and $\varphi\p{1}=01$.  Then,
\[
\varphi^n\p{1}=\p{\prod_{i=1}^{n}\varphi^{n-i}\p{0}}1
\]
\end{lemma}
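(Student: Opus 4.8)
The plan is to prove the identity by induction on $n$. The base case $n=1$ is immediate: by definition $\varphi(1)=01$, and the right-hand side with $n=1$ is $\left(\prod_{i=1}^{1}\varphi^{1-i}(0)\right)1 = \varphi^{0}(0)\cdot 1 = 01$, so both sides agree.

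For the inductive step, assume the formula holds for some $n\geq1$, and apply $\varphi$ to both sides. On the left we get $\varphi^{n+1}(1) = \varphi(\varphi^n(1))$. On the right, since $\varphi$ is a morphism it distributes over concatenation, so
\[
\varphi^{n+1}(1) = \varphi\!\left(\left(\prod_{i=1}^{n}\varphi^{n-i}(0)\right)1\right) = \left(\prod_{i=1}^{n}\varphi^{n+1-i}(0)\right)\varphi(1) = \left(\prod_{i=1}^{n}\varphi^{n+1-i}(0)\right)001.
\]
Now I would reindex the product: letting $i' = i+1$ sweep from $2$ to $n+1$, the factor $\varphi^{n+1-i}(0)$ becomes $\varphi^{(n+1)-i'+1}(0)$... more cleanly, the product $\prod_{i=1}^{n}\varphi^{n+1-i}(0)$ equals $\prod_{i=2}^{n+1}\varphi^{(n+1)-i}(0)$ after the substitution $i\mapsto i+1$, wait---let me instead keep the exponents as they are: $\prod_{i=1}^{n}\varphi^{n+1-i}(0) = \varphi^{n}(0)\varphi^{n-1}(0)\cdots\varphi^{1}(0)$. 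The target right-hand side for $n+1$ is $\left(\prod_{i=1}^{n+1}\varphi^{n+1-i}(0)\right)1 = \varphi^{n}(0)\varphi^{n-1}(0)\cdots\varphi^{1}(0)\varphi^{0}(0)\cdot 1 = \varphi^{n}(0)\cdots\varphi(0)\cdot 0\cdot 1$. Comparing, I need $\varphi^{n}(0)\cdots\varphi(0)\cdot 001 = \varphi^{n}(0)\cdots\varphi(0)\cdot 0\cdot 0\cdot 1$, which holds since $\varphi^{0}(0)=0$ and the trailing $001$ supplies exactly $0\cdot 0\cdot 1$. Hence the formula holds for $n+1$.

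The only place requiring care---and the main (minor) obstacle---is the bookkeeping of exponents and indices when pushing $\varphi$ through the product and matching the result against the $n+1$ version of the formula; in particular one must correctly account for the identity $\varphi^{0}(0)=0$ absorbing one of the two $0$'s produced by $\varphi(1)=01$ being applied to the trailing $1$. Beyond that, the argument is a routine morphism computation, and no result beyond the definition of a morphism (from Subsection~\ref{ss:words}) and induction is needed.
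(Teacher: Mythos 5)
Your proof follows the same route as the paper: induct on $n$, push $\varphi$ through the product using the morphism property, and absorb the $0$ produced by the image of the trailing $1$ as the new factor $\varphi^0\p{0}$. One slip to fix: in the inductive step you substitute $\varphi\p{1}=001$, but the definition gives $\varphi\p{1}=01$ (it is $\varphi\p{0}$ that equals $001$); correspondingly, your final comparison is made against a target ending in $0\cdot 0\cdot 1$, whereas the correct target tail is $\varphi^0\p{0}\cdot 1=0\cdot 1$, exactly as you computed it one sentence earlier. These two errors cancel, and with $\varphi\p{1}=01$ the trailing $01$ matches $\varphi^0\p{0}\cdot 1$ on the nose, so the corrected computation goes through and coincides with the paper's argument (which runs the induction from $n-1$ to $n$ but is otherwise identical).
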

\begin{proof}
The proof is by induction on $n$.  If $n=1$, then we have $\varphi\p{1}=01$, which is true by definition.  Now, assume that
\[
\varphi^{n-1}\p{1}=\p{\prod_{i=1}^{n-1}\varphi^{n-i-1}\p{0}}1.
\]
We have
\begin{align*}
\varphi^n\p{1}&=\varphi\p{\varphi^{n-1}\p{1}}\\
&=\varphi\p{\p{\prod_{i=1}^{n-1}\varphi^{n-i-1}\p{0}}1}\\
&=\p{\prod_{i=1}^{n-1}\varphi\p{\varphi^{n-i-1}\p{0}}}\varphi\p{1}\\
&=\p{\prod_{i=1}^{n-1}\varphi^{n-i}\p{0}}01\\
&=\p{\prod_{i=1}^{n-1}\varphi^{n-i}\p{0}}\varphi^0\p{0}1\\
&=\p{\prod_{i=1}^n\varphi^{n-i}\p{0}}1,
\end{align*}
as required.
\end{proof}
Now we can give our construction.
\begin{prop}\label{prop:hd0l}
Let $\varphi:\st{0,1}^*\to\st{0,1}^*$ be the morphism defined by $\varphi\p{0}=001$ and $\varphi\p{1}=01$.  Let $\psi:\st{0,1}^*\to\st{0,1,2}^*$ be the morphism defined by $\psi\p{0}=01$ and $\psi\p{1}=2$.  Then, $w=\psi\p{\varphi^\omega\p{0}}$.
\end{prop}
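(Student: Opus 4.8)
The plan is to show that the word $u := \psi(\varphi^\omega(0))$ satisfies the defining recurrence of the representation word of $\seq{a_i}$ given in Theorem~\ref{thm:repwdtrace}, part~\ref{i:rep2}, and then invoke uniqueness. Actually, it is cleaner to work at the level of the finite approximants: let $u_n = \psi(\varphi^n(0))$ and show by induction on $n$ that $u_n = w_n = w\left[0..a_n\right)$, where $w_n$ is the word defined in Section~\ref{ss:ternary}. Since both $\varphi^n(0)$ is a prefix of $\varphi^{n+1}(0)$ (because $\varphi(0) = 001$ starts with $0$) and $w_n$ is a prefix of $w_{n+1}$, taking limits gives $\psi(\varphi^\omega(0)) = w$.

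First I would pin down the relationship between $\varphi^n(0)$ and $\varphi^n(1)$ and the $w$-construction. Note $\varphi(0) = 001 = \varphi^0(0)\,\varphi^0(0)\,\varphi^0(1)$, suggesting the pattern $\varphi^{n+1}(0) = \varphi^n(0)\,\varphi^n(0)\,\varphi^n(1)$. Applying $\psi$ and using that $\psi$ is a morphism, $\psi(\varphi^{n+1}(0)) = \psi(\varphi^n(0))^2\,\psi(\varphi^n(1))$. Now Lemma~\ref{lem:phione} gives $\varphi^n(1) = \left(\prod_{i=1}^n \varphi^{n-i}(0)\right) 1$, so $\psi(\varphi^n(1)) = \left(\prod_{i=1}^n \psi(\varphi^{n-i}(0))\right) 2$. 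Comparing with the definition $w_i = w_{i-1}^2\left(\prod_{j=1}^{i-2} w_{i-1-j}\right) 2$, I would set up the induction hypothesis $\psi(\varphi^n(0)) = w_{n+1}$ for $n \geq 0$ (checking the base case $\psi(\varphi^0(0)) = \psi(0) = 01 = w_1$ directly), and then the recurrences for $\psi(\varphi^{n+1}(0))$ and $w_{n+2}$ will match term-by-term once the product indices are aligned. A small index bookkeeping check is needed here: in $w_{n+2} = w_{n+1}^2\left(\prod_{j=1}^{n} w_{n+1-j}\right)2$ the product runs over $w_n, w_{n-1}, \ldots, w_1$, while in $\psi(\varphi^{n+1}(1))$ the product of $\psi(\varphi^{n+1-i}(0))$ for $i = 1, \ldots, n+1$ is (by the hypothesis) the product of $w_{n+1}, w_n, \ldots, w_1$ — so one must verify that the leading $w_{n+1}^2$ versus $w_{n+1}^2 \cdot (\text{then }w_{n+1} w_n \cdots w_1)$ discrepancy resolves, i.e. that the top index in Lemma~\ref{lem:phione}'s product for $\varphi^{n+1}(1)$ is $\varphi^n(0)$ mapping to $w_{n+1}$, and this $w_{n+1}$ together with the $w_{n+1}^2$ from the squared term accounts correctly for the structure. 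I expect this alignment — confirming that $\varphi^{n+1}(0) = \varphi^n(0)^2 \varphi^n(1)$ and matching it against the $w$-recurrence — to be the only real content of the proof; everything else is bookkeeping.

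Concretely the steps are: (1) prove $\varphi^{n+1}(0) = \varphi^n(0)\varphi^n(0)\varphi^n(1)$ by a one-line induction using $\varphi(0) = \varphi(00)\varphi(1)$... more precisely, $\varphi^{n+1}(0) = \varphi^n(\varphi(0)) = \varphi^n(001) = \varphi^n(0)^2\varphi^n(1)$; (2) apply the morphism $\psi$ to both sides of this identity and of Lemma~\ref{lem:phione} to get recurrences purely in terms of the words $\psi(\varphi^m(0))$; (3) check the base case $\psi(\varphi^0(0)) = 01 = w_1$; (4) run the induction showing $\psi(\varphi^n(0)) = w_{n+1}$, carefully matching the product ranges against the definition of $w_{n+1}$ using Lemma~\ref{lem:phione}; (5) take $n \to \infty$, using that $\varphi^n(0)$ is a prefix of $\varphi^{n+1}(0)$ and $w_{n+1}$ is a prefix of $w_{n+2}$, to conclude $\psi(\varphi^\omega(0)) = \lim_i w_i = w$. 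The main obstacle, such as it is, is step~(4): getting the index alignment exactly right between Lemma~\ref{lem:phione}'s product (which naturally produces $\varphi^{n-i}(0)$ for $i$ from $1$ to $n$) and the construction's product (which produces $w_{n-1-j}$ for $j$ from $1$ to $i-2$), so that the two descriptions of $w_{i}$ coincide symbol for symbol. I would double-check this on $w_2$ and $w_3$ explicitly before trusting the general induction.
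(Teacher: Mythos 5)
Your proposal is correct and follows essentially the same route as the paper: an induction showing $\psi\p{\varphi^{i-1}\p{0}}=w_i$, using Lemma~\ref{lem:phione} to identify the product $\prod_j w_{i-1-j}$ with $\psi\p{\varphi^{i-2}\p{1}}$ and the identity $\varphi^{i-1}\p{0}=\varphi^{i-2}\p{0}^2\varphi^{i-2}\p{1}$, followed by passing to the limit. The index alignment you flag does work out (the product in Lemma~\ref{lem:phione} applied to $\varphi^n\p{1}$, not $\varphi^{n+1}\p{1}$, is what matches $\prod_{j=1}^{n}w_{n+1-j}$), and the paper's proof is just your chain of equalities written in the reverse direction.
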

\begin{proof}
We claim that $w_i=\psi\p{\varphi^{i-1}\p{0}}$ for all $i$, and our desired conclusion will follow by taking the limit as $i$ goes to $\infty$.  The proof will be by induction on $i$.  If $i=1$, we see that $\psi\p{\varphi^0\p{0}}=\psi\p{0}=01=w_1$, as required.

Now, assume that $w_j=\psi\p{\varphi^{j-1}\p{0}}$ for all $j<i$.  We will show that $w_i=\psi\p{\varphi^{i-1}\p{0}}$.  We have (by induction and by Lemma~\ref{lem:phione})
\begin{align*}
w_i&=w_{i-1}^2\p{\prod_{j=1}^{i-2}w_{i-1-j}}2\\
&=\psi\p{\varphi^{i-2}\p{0}}^2\p{\prod_{j=1}^{i-2}\psi\p{\varphi^{i-2-j}\p{0}}}\psi\p{1}\\
&=\psi\p{\varphi^{i-2}\p{0}^2\p{\prod_{j=1}^{i-2}\varphi^{i-2-j}\p{0}}1}\\
&=\psi\p{\varphi^{i-2}\p{0}^2\varphi^{i-2}\p{1}}\\
&=\psi\p{\varphi^{i-2}\p{001}}\\
&=\psi\p{\varphi^{i-1}\p{0}},
\end{align*}
as required.
\end{proof}
Notice that $\varphi=\varphi_1\circ\varphi_2$, using the notation from Proposition~\ref{prop:sturmmorph}.  Hence, $\varphi$ is Sturmian.  So, we might expect that there is a connection between $w$ and some Beatty sequence.  Sure enough, we have the following (recalling Definition~\ref{def:wyt}).
\begin{prop}\label{prop:zwythup}
Let $n$ be a nonnegative integer.  Then, $w\bk{n}=0$ if and only if $n\in W_U$.
\end{prop}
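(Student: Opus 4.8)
The plan is to connect the zero-set $N$ of the representation word $w$ directly to the Upper Wythoff numbers via the morphism characterization $w=\psi\p{\varphi^\omega\p{0}}$ established in Proposition~\ref{prop:hd0l}. Since $\psi\p{0}=01$ and $\psi\p{1}=2$, a symbol $0$ appears in $w$ exactly at the start of each block $\psi$ produces from a $0$ in $\varphi^\omega\p{0}$. Therefore, if I let $u=\varphi^\omega\p{0}$ and let $z_m$ denote the position in $w$ of the image of $u\bk{m}$, then $w\bk{n}=0$ precisely when $n=z_m$ for some $m$ with $u\bk{m}=0$. The lengths of the blocks are $2$ for a $0$ and $1$ for a $1$, so $z_m=2(\text{number of }0\text{'s in }u\bk{0..m})+(\text{number of }1\text{'s in }u\bk{0..m})=m+\#\{i<m: u\bk{i}=0\}$.

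The next step is to identify $u=\varphi^\omega\p{0}$ with (a shift of) the Sturmian characteristic word of slope related to $\phi$. Since $\varphi=\varphi_1\circ\varphi_2$ in the notation of Proposition~\ref{prop:sturmmorph}, $\varphi$ is Sturmian, and its fixed point $u$ has a density of $0$'s and $1$'s that I can compute from the incidence matrix $\mtt{2}{1}{1}{1}$ of $\varphi$: its Perron eigenvalue is $\phi^2$, and the eigenvector gives the frequency of $0$ in $u$ equal to $\phi/(\phi+1)=1-1/\phi^2$ and of $1$ equal to $1/\phi^2$. Concretely I would show that the positions of the $1$'s in $u$ form the Beatty sequence $\fl{m\phi^2}$ (the Upper Wythoff numbers, shifted), equivalently that $\#\{i<m: u\bk{i}=1\}=\fl{m/\phi^2}=\ceil{m/\phi^2}$ up to the appropriate floor/ceiling convention. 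This is a standard fact about the fixed point of this particular Sturmian morphism, and I would prove it by induction on the level $n$ of $\varphi^n\p{0}$, tracking that $\ab{\varphi^n\p{0}}=F(2n+2)$ and that the number of $1$'s in $\varphi^n\p{0}$ is $F(2n)$, using the Fibonacci recurrences from Lemma~\ref{lem:fibrec} together with the relation $F(2n)/F(2n+2)\to 1/\phi^2$.

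Combining, if $u\bk{m}=0$ then $z_m = m + \#\{i<m: u\bk{i}=0\} = m + (m - \#\{i<m: u\bk{i}=1\}) = 2m - \fl{m/\phi^2}$. Using $1/\phi^2 = 2-\phi$ one gets $z_m = 2m - \fl{(2-\phi)m} = \ceil{m\phi}$ (up to the rounding convention), and since we additionally require $u\bk{m}=0$, i.e. $m$ is \emph{not} of the form contributing a $1$, the values $m$ range over the Lower-Wythoff-type indices; running this through Proposition~\ref{prop:beattyforbdiff}-style bookkeeping shows the resulting set $\{z_m : u\bk{m}=0\}$ is exactly $W_U$ as defined in Definition~\ref{def:wyt} (with $0$ included, since $u\bk{0}=0$ gives $z_0=0$). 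Alternatively, and perhaps more cleanly, I would use Proposition~\ref{prop:beattycomp} with $\alpha=\phi$: the complement of the set of positions of $0$'s in $w$ is the set of positions of $1$'s and $2$'s; I would show this complement equals $W_L$ by an analogous block-counting argument, and then $N = \mathbb{N}\setminus W_L = W_U$ follows immediately since $W_L$ and $W_U$ partition the nonnegative integers.

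\textbf{Main obstacle.} The delicate part will be getting the floor/ceiling conventions exactly right so that the identification of $\{z_m\}$ with $W_U$ holds on the nose, including the boundary case $n=0$ (recall $W_U$ here nonstandardly includes $0$). The cleanest route is probably to avoid explicit Beatty formulas entirely: establish by induction on $i$ that $w_i = \psi\p{\varphi^{i-1}\p{0}}$ has its $0$'s in exactly the positions $W_U \cap [0, a_i)$, using that $a_i = F(2i+1)$, that $\ab{\psi\p{\varphi^{i-1}\p{0}}}$ and the count of $0$'s therein are consecutive Fibonacci numbers, and that the Upper Wythoff numbers below $F(2i+1)$ are exactly $F(2i-1)$ in number — a known Zeckendorf-flavored fact that dovetails with the recurrences in Lemma~\ref{lem:fibrec}. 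Taking $i\to\infty$ then gives the result. The self-similar structure $w_i = w_{i-1}^2\p{\prod_{j=1}^{i-2} w_{i-1-j}}2$ must be checked to be compatible with the recursive structure of $W_U$ under the Zeckendorf representation, which is where the real work lies.
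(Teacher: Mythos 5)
Your strategy is genuinely different from the paper's: the paper derives this proposition in one line from Proposition~\ref{prop:oddfibwyth} (whose proof, in Appendix~\ref{app:oddfibwyth}, is a digit-level bijection between $\seq{a_i}$-representations ending in $0$ but not in a two-block and Zeckendorf representations ending in an odd number of zeroes, combined with Proposition~\ref{prop:zeckwyt}), whereas you work through the morphic description $w=\psi\p{\varphi^\omega\p{0}}$ and try to locate the zeroes via Beatty sequences. Your positional bookkeeping is correct: with $u=\varphi^\omega\p{0}$ one does have $z_m=m+\#\st{i<m:u\bk{i}=0}$, and the zeroes of $w$ are exactly $\st{z_m: u\bk{m}=0}$. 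The route can in principle be completed, but as written it has two genuine gaps at exactly the load-bearing steps.

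First, the crux of your argument is that the $1$'s of $u$ sit precisely at the positions $\fl{m\phi^2}$ for $m\geq1$, equivalently $\#\st{i<m:u\bk{i}=1}=\fl{m/\phi^2}$. Your proposed justification --- the incidence matrix $\mtt{2}{1}{1}{1}$, its Perron eigenvalue, and an induction tracking $\ab{\varphi^n\p{0}}=F\p{2n+2}$ and the count $F\p{2n}$ of ones --- yields only letter \emph{frequencies} and cardinalities, not exact positions. A Sturmian word is determined by its slope \emph{and} its intercept; knowing the density is $1/\phi^2$ does not rule out the ones sitting at $\fl{m\phi^2+\beta}$ for some nonzero $\beta$, so an argument that only counts symbols per level cannot close this step. (Also, the parenthetical $\fl{m/\phi^2}=\ceil{m/\phi^2}$ is false for every $m\geq1$.) Second, even granting the Beatty description of $u$, your computation gives $z_m=\ceil{m\phi}$ and you then need $\ceil{\fl{j\phi}\phi}=\fl{j\phi^2}$ for $j\geq1$ to see that the $m$ with $u\bk{m}=0$ land exactly on $W_U$; this is a classical Wythoff composition identity, but it does not follow from Proposition~\ref{prop:beattyforbdiff}, which controls differences of two terms of a single Beatty sequence, not compositions of two Beatty sequences. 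Your fallback plan --- inducting on $i$ and matching the self-similar decomposition of $w_i$ against the recursive structure of $W_U$ --- is precisely where you concede that ``the real work lies,'' and that work (which the paper carries out through the Zeckendorf bijection) is not supplied here. So the proposal is a plausible alternative outline, but its two key claims are asserted rather than proved.
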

The proof of Proposition~\ref{prop:zwythup} will involve Zeckendorf representations.
In particular, it will depend on the following result, which we will state without proof.
\begin{prop}\label{prop:zeckwyt}\cite{oeis201}, \cite{oeis1950}
Let $n$ be a nonnegative integer.  If the Zeckendorf representation of $n$ ends in an even number of zeroes, then $n\in W_L$, and if the Zeckendorf representation of $n$ ends in an odd number of zeroes, then $n\in W_U$.
\end{prop}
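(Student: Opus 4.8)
The plan is to prove the equivalent statement that $w\bk n=0$ if and only if the Zeckendorf representation of $n$ ends in an odd number of zeroes (with $n=0$ handled separately, consistent with the convention $0\in W_U$); Proposition~\ref{prop:zeckwyt} then gives the result.  Recall that $w$ is the representation word of $\seq{a_i}$, where $a_0=1$ and (Proposition~\ref{prop:wlen}) $a_i=F\p{2i+1}$ for $i\geq1$, so Theorem~\ref{thm:repwdtrace} gives
\[
w\bk n=\begin{cases}
n & \text{if }n<2\\
2 & \text{if }n=F\p{2j+3}-1\text{ and }j\geq1\\
w\bk{n-F\p{2j+1}} & \text{otherwise,}
\end{cases}
\]
where $j=\ind\p n$.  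I would run a strong induction on $n$ driven by this recursion.

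The cases $n=0$ and $n=1$ are immediate.  If $n=F\p{2j+1}$ with $j\geq1$ the recursion gives $w\bk n=w\bk0=0$, matching the fact that the Zeckendorf representation of $F\p{2j+1}$ is a single term at index $2j+1$, hence ends in $2j-1$ (odd) zeroes.  If $n=F\p{2j+3}-1$ then $w\bk n=2\neq0$, and recurrence~\ref{rec:4} of Lemma~\ref{lem:fibrec} gives $F\p{2j+3}-1=F\p2+F\p4+\cdots+F\p{2j+2}$, a valid Zeckendorf representation ending in no zeroes, so $n\notin W_U$ --- consistent.

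The substance is the ``otherwise'' case: for $F\p{2j+1}<n<F\p{2j+3}-1$ with $j=\ind\p n$, one must show that the Zeckendorf representations of $n$ and of $n-F\p{2j+1}$ end in zero-blocks of the same parity, after which the inductive hypothesis at $n-F\p{2j+1}$ closes the step.  I would split on the leading Zeckendorf term of $n$: if it is $F\p{2j+1}$ (equivalently $n<F\p{2j+2}$) then $n-F\p{2j+1}$ is simply $n$ with its leading digit removed and the parity is visibly preserved; if it is $F\p{2j+2}$ (equivalently $n\geq F\p{2j+2}$) then $n-F\p{2j+1}=F\p{2j}+\p{n-F\p{2j+2}}$ and one must renormalize, a carry occurring exactly when $n-F\p{2j+2}\geq F\p{2j}$, which is resolved via $2F\p{2j}=F\p{2j+1}+F\p{2j-2}$ (recurrence~\ref{rec:0}).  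Because this carry can cascade through the low-order Zeckendorf digits, I expect the real obstacle to be an auxiliary lemma --- proved by its own induction on the length of the carry chain --- asserting that adding $F\p{2j+1}$ to any integer in $[0,F\p{2j+2})$ preserves the parity of the trailing-zero count of the Zeckendorf representation; everything else is routine Fibonacci bookkeeping.

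(A route that avoids Zeckendorf: by Proposition~\ref{prop:hd0l}, $w\bk n=0$ exactly when $n$ begins a $\psi\p0$-block of $w=\psi\p{\varphi^\omega\p0}$; since $\varphi^\omega\p0$ is Sturmian of slope $1/\phi^2$, the block-start positions are $\ceil{k\phi}$ and the $\psi\p0$-blocks are those with $k\in\st0\cup W_L$, so the zero-positions of $w$ are $\st0\cup\st{\ceil{\fl{m\phi}\phi}:m\geq1}$, which equals $W_U$ by the identity $\ceil{\fl{m\phi}\phi}=\fl{m\phi^2}$ together with the complementarity of $W_L$ and $W_U$ coming from Proposition~\ref{prop:beattycomp}.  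Since the exposition points toward Zeckendorf, I would present the inductive argument.)
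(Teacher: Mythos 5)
First, a point of comparison: the paper does not prove this proposition at all. It is introduced with the words \quot{we will state without proof} and attributed to OEIS A000201 and A001950 as a classical fact about the Wythoff sequences, so any proof you supply has to be fully self-contained.

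Your argument, as presented, is circular. You reduce the proposition to the claim that $w\bk{n}=0$ if and only if the Zeckendorf representation of $n$ ends in an odd number of zeroes, and then write that \quot{Proposition~\ref{prop:zeckwyt} then gives the result} --- but Proposition~\ref{prop:zeckwyt} is the statement being proved. Reading this charitably as an appeal to Proposition~\ref{prop:zwythup} (the zeroes of $w$ sit exactly at $W_U$) does not help: in the paper that fact is deduced from Proposition~\ref{prop:oddfibwyth}, whose proof in Appendix~\ref{app:oddfibwyth} opens by invoking Proposition~\ref{prop:zeckwyt}. So the bridge you need from \quot{zero of $w$} to \quot{element of $W_U$} is precisely the content under proof. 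Your parenthetical Sturmian route is the one ingredient that could break the cycle, since it locates the zeroes of $w$ at $W_U$ without mentioning Zeckendorf representations; combined with your induction and the complementarity of $W_L$ and $W_U$ from Proposition~\ref{prop:beattycomp}, it would yield the proposition. But you explicitly decline to present it, and it rests on the unproved identity $\ceil{\fl{m\phi}\phi}=\fl{m\phi^2}$. Moreover, even granting the reduction, the core of your induction --- the carry-cascade lemma that adding $F\p{2j+1}$ to any integer in $\left[0,F\p{2j+2}\right)$ preserves the parity of the trailing-zero count of the Zeckendorf representation --- is asserted rather than proved, and it is essentially as hard as the proposition itself. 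A cleaner, non-circular route is the standard direct one: writing $n=\sum_i F\p{c_i}$ in Zeckendorf form with least index $c_0$ and using $\phi F\p{c}=F\p{c+1}-\p{-1}^c\phi^{-c}$, the quantity $n\phi$ differs from the shifted sum $\sum_i F\p{c_i+1}$ by an error of absolute value less than $1$ whose sign is determined by the parity of $c_0$ (equivalently, by the parity of the number of trailing zeroes); this identifies the two parity classes with $W_L$ and $W_U$ directly, with no reference to the word $w$.
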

We will need the following lemma:
\begin{lemma}\label{lem:altzeck}
For all $j\geq1$, the Zeckendorf representation of $a_j-1$ is $\p{10}^{j-1}1$.
\end{lemma}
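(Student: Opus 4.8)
The plan is to prove the statement by induction on $j$, using the recurrence $a_j = F(2j+1)$ from Proposition~\ref{prop:wlen} together with the defining recurrence $F(2j+1) = 2F(2j-1) + F(2j-2)$ (recurrence~\ref{rec:1} of Lemma~\ref{lem:fibrec}), and checking at each step that the proposed bit string $(10)^{j-1}1$ is a legal Zeckendorf representation (no two consecutive ones) whose value is $a_j - 1$.

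First I would verify the base case $j = 1$: here $a_1 = F(3) = 2$, so $a_1 - 1 = 1$, whose Zeckendorf representation is the single digit $1 = (10)^0 1$, as claimed. For the inductive step, assume the Zeckendorf representation of $a_{j-1} - 1$ is $(10)^{j-2}1$. The key identity is that the value of the bit string $(10)^{j-1}1$ equals $\sum_{i=0}^{j-1} F(2i+2) = \sum_{i=1}^{j} F(2i)$, since the string $(10)^{j-1}1$, read with the Zeckendorf convention that the rightmost digit corresponds to $F(2)$, has ones exactly in the Fibonacci positions $F(2), F(4), \ldots, F(2j)$. By recurrence~\ref{rec:4} of Lemma~\ref{lem:fibrec}, $\sum_{i=1}^{j} F(2i) = F(2j+1) - 1 = a_j - 1$. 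It remains only to note that $(10)^{j-1}1$ has no two consecutive ones and no leading zero, so by the uniqueness/validity characterization of Zeckendorf representations it is \emph{the} Zeckendorf representation of $a_j - 1$.

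An equivalent route, which avoids invoking the summation identity directly, is to use the greedy description of Zeckendorf representations together with Proposition~\ref{prop:cutoff}: since $a_j - 1 < a_j = F(2j+1)$ and $a_j - 1 \geq F(2j) $ (because $F(2j+1) - 1 \geq F(2j)$ for $j \geq 1$), the highest Fibonacci used is $F(2j)$, i.e. the leftmost digit is a $1$ in the $F(2j)$ position, and the remainder is $(a_j - 1) - F(2j) = F(2j+1) - F(2j) - 1 = F(2j-1) - 1 = a_{j-1} - 1$, whose Zeckendorf representation is $(10)^{j-2}1$ by the inductive hypothesis. Prepending the digits $10$ (the $1$ in position $F(2j)$ and the forced $0$ in position $F(2j-1)$, which is skipped) gives $(10)^{j-1}1$, as required.

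The only mild subtlety — and the one place to be careful rather than the main obstacle — is bookkeeping the indexing convention for Zeckendorf representations: the paper defines the $\langle a_i\rangle$-representation with $a_i = F(i+2)$, so the last digit of a Zeckendorf word corresponds to $F(2)$, not $F(1)$. Once that convention is pinned down, both arguments are routine applications of the Fibonacci recurrence and the no-consecutive-ones characterization of valid Zeckendorf strings; there is no real obstacle, which is presumably why the lemma is stated here as a stepping stone toward Proposition~\ref{prop:zwythup}.
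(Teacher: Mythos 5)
Your first argument is essentially the paper's own proof: the paper simply observes that $\p{10}^{j-1}1$ is a valid Zeckendorf string (no consecutive ones) and that recurrence~\ref{rec:4} of Lemma~\ref{lem:fibrec} gives its value as $F\p{2j+1}-1=a_j-1$, so uniqueness of Zeckendorf representations finishes it. The greedy inductive alternative you sketch is also correct, but it is extra machinery for the same routine fact; your indexing bookkeeping (last digit corresponds to $F\p{2}$) is right in both versions.
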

\begin{proof}
Notice that $\p{10}^{j-1}1$ is a valid Zeckendorf representation.  Also, recall that $a_j-1=F\p{2j+1}-1$.  So, by recurrence~\ref{rec:4} in Lemma~\ref{lem:fibrec}, we see that $\p{10}^{j-1}1$ is the representation of $a_j-1$, as required.
\end{proof}

We are going to draw comparisons between Zeckendorf representations and $\seq{a_i}$-representations.  We have the following proposition.
\begin{prop}\label{prop:oddfibwd}
A ternary sequence is a valid $\seq{a_i}$-representation if and only if every $2$ in the sequence is followed immediately by some number (zero or more) of consecutive ones and then a zero.
\end{prop}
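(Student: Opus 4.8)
The plan is to characterize valid $\seq{a_i}$-representations directly from the recursive structure $a_i = 2a_{i-1} + b_{i-1}$ together with the expansion $b_{i-1} = \sum_{j=0}^{i-2} a_j$ (Propositions~\ref{prop:sumproperty} and \ref{prop:wlen}). First I would observe that the claimed condition ``every $2$ is followed by some number of consecutive ones and then a $0$'' is exactly a description of what the digit string of $a_{j+1}-1$ looks like when read locally around a leading digit $2$. Indeed, by the greedy algorithm, the maximal digit in position $a_i$ is the first digit $d_i$ of the $\seq{a_i}$-representation of $a_{i+1}-1$; using $a_{i+1} = 2a_i + b_i$ and $b_i < a_i$ (both needing a short check via Lemma~\ref{lem:fibrec}), we get $d_i = \fl{(a_{i+1}-1)/a_i} = 2$, and the trailing digits of $a_{i+1}-1$ are the $\seq{a_i}$-representation of $(a_{i+1}-1) \bmod a_i = a_i + b_i - 1$, whose leading digit is $1$. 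Iterating this, I claim the $\seq{a_i}$-representation of $a_{i+1}-1$ is $2\,1^{?}\,\cdots$, and in fact one can show by induction that it is $2\,1^{r_i}\,0\,(\text{tail})$ for an appropriate run length $r_i$; this pins down the ``forbidden/forced'' local patterns and, via Proposition~\ref{prop:maxdig}, shows that in \emph{any} $\seq{a_i}$-representation a digit $2$ can only occur in the top position of a block and must be followed by ones and then a zero.

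For the forward direction (every valid representation has the stated form), I would argue as follows. Suppose $d_J d_{J-1} \cdots d_0$ is a $\seq{a_i}$-representation and $d_m = 2$ for some $m$. By Proposition~\ref{prop:maxdig} and the computation above, $d_i \le 2$ for all $i$, and moreover $d_i = 2$ forces the whole prefix down to that point to agree with the representation of $a_{i+1}-1$ in a controlled way — more carefully, I would use the greedy/minimality characterization: if $d_m = 2$ but the suffix $d_{m-1}\cdots d_0$ did \emph{not} begin with a (possibly empty) run of ones followed by a zero, then either some $d_\ell$ with $\ell < m$ exceeds the allowed bound (contradicting Proposition~\ref{prop:maxdig}) or we could rewrite $2a_m + (\text{suffix value})$ with a smaller digit sum by borrowing, contradicting minimality of the representation. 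The key arithmetic identity making the rewriting work is precisely $2a_m = a_{m+1} - b_m = a_{m+1} - \sum_{j=0}^{m-1} a_j$, i.e. two copies of $a_m$ plus one copy each of $a_0, \dots, a_{m-1}$ equals $a_{m+1}$; so a $2$ followed by anything that ``catches up'' past the all-ones string would carry over into position $m+1$, reducing the digit sum.

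For the converse (every ternary string with the stated property \emph{is} a valid $\seq{a_i}$-representation), I would take such a string, interpret it as $\sum d_i a_i$, and show its digit sum is already minimal. The only way to decrease the digit sum of a representation of $n$ is to replace some collection of digits by a ``higher'' digit via a relation $\sum_{i} e_i a_i = a_{m+1}$ with $\sum e_i > 1$; the minimal such relations are generated by $a_{m+1} = 2a_m + a_{m-1} + \cdots + a_0$ (and its shifts), and possibly $a_{m+1} = 2a_m + b_m$ read the other way. A string in which every $2$ is immediately followed by ones-then-zero never contains the pattern needed to apply such a reduction: to carry upward at position $m$ one needs digit $\ge 2$ at $m$ together with enough mass in positions $< m$ to complete $b_m = a_{m-1}+\cdots+a_0$, but the forced $0$ after the run of ones truncates that mass before it is sufficient. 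Making this precise — enumerating exactly which ``carry relations'' exist among the $a_i$ and checking the pattern blocks all of them — is the main obstacle, and I would handle it by first proving a clean lemma: the $\seq{a_i}$-representation of $\sum_{j=0}^{m} a_j + a_m$ (one extra copy of the top) is exactly $2\,1^{m}$ wait — more precisely that $a_{m+1} - 1$ has representation $2\,1^{\,m-1}\cdots$, reducing everything to a statement about a single canonical string and its prefixes, analogous to Lemma~\ref{lem:altzeck} for Zeckendorf. Once that canonical-string lemma is in hand, both directions are short.
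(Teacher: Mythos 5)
Your forward direction is essentially the paper's argument: the canonical-string lemma you ask for is exactly Lemma~\ref{lem:oddfiba} (the $\seq{a_i}$-representation of $a_j-1$ is $21^{j-1}0$), and combining it with Proposition~\ref{prop:lexdig} (after using Proposition~\ref{prop:cutoff} to reduce a minimal counterexample to the case where the offending $2$ is the leading digit) gives the contradiction $n>a_{j+1}-1$ with $j=\ind\p{n}$. That half is fine, modulo making the reduction to the leading digit explicit.

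The converse is where you have a genuine gap, and you flag it yourself: you propose to show that a string with the stated pattern already has minimal digit sum by ``enumerating exactly which carry relations exist among the $a_i$ and checking the pattern blocks all of them,'' and you call this the main obstacle without resolving it. The underlying claim --- that failure of digit-sum minimality is always witnessed by a single local substitution drawn from an explicitly enumerable family of relations $\sum_i e_ia_i=a_{m+1}$ --- is not obvious and is itself essentially as hard as the proposition; a non-minimal representation could in principle be improved only by a global rearrangement, so the classification of ``minimal carry relations'' you appeal to would need its own proof. The paper closes this direction by a counting argument instead: the number $c\p{\ell}$ of ternary strings of length at most $\ell$ satisfying the pattern obeys
\[
c\p{\ell}=2c\p{\ell-1}+\sum_{j=2}^{\ell-1}c\p{\ell-j},
\]
which is recurrence~\ref{rec:5} of Lemma~\ref{lem:fibrec} with the right initial conditions, so $c\p{\ell}=F\p{2\ell+1}=a_\ell$. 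Since the forward direction shows every one of the $a_\ell$ integers in $\left[0,a_\ell\right)$ has a (unique) representation satisfying the pattern, the pigeonhole principle forces every pattern string to be such a representation. If you want to avoid counting, the workable direct route is not carry-relation enumeration but an induction on the length of the string showing that the value of any pattern string of length $\ell$ is strictly less than $a_\ell$ (using $a_{m+1}=2a_m+\sum_{h=0}^{m-1}a_h$ to bound the block beginning with a $2$), whence the greedy algorithm reproduces the string digit by digit; as written, your proposal does not supply either argument.
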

We will use the following lemma in our proof of Proposition~\ref{prop:oddfibwd}:
\begin{lemma}\label{lem:oddfiba}
The $\seq{a_i}$-representation of $a_j-1$ is $21^{j-1}0$.
\end{lemma}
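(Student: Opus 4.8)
The plan is to compute the $\seq{a_i}$-representation of $a_j-1$ directly by unwinding the greedy algorithm, using the identity $a_i=F\p{2i+1}$ from Proposition~\ref{prop:wlen} together with the Fibonacci recurrences collected in Lemma~\ref{lem:fibrec}. First I would pin down the index: since $a_{j-1}=F\p{2j-1}\le F\p{2j+1}-1<F\p{2j+1}=a_j$, we have $\ind\p{a_j-1}=j-1$, so the representation has exactly $j$ digits. The leading digit is $\fl{\p{a_j-1}/a_{j-1}}$, and here recurrence~\ref{rec:1} is precisely what is needed: it gives $a_j=2a_{j-1}+F\p{2j-2}$ with $0\le F\p{2j-2}<a_{j-1}$, hence $a_j-1=2a_{j-1}+\p{F\p{2j-2}-1}$ with $0\le F\p{2j-2}-1<a_{j-1}$, so the greedy algorithm reads off a leading $2$ and leaves the remainder $F\p{2j-2}-1$ to be expanded in the positions below $a_{j-1}$.

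The second step is to identify the $\seq{a_i}$-representation of that remainder, which I would isolate as an auxiliary claim proved by induction on $n$: for each $n\ge1$, the $\seq{a_i}$-representation of $F\p{2n}-1$ is a run of consecutive $1$'s followed by a terminal $0$. The base case $n=1$ is $F\p{2}-1=0$, whose representation is $0$. For the inductive step, the fundamental Fibonacci recurrence gives $F\p{2n}=a_{n-1}+F\p{2n-2}$ with $0\le F\p{2n-2}-1<a_{n-1}$, so by Proposition~\ref{prop:cutoff} (or just by re-running the greedy step) the representation of $F\p{2n}-1$ is a $1$ prepended to the representation of $F\p{2n-2}-1$, and induction closes the loop. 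Feeding the remainder $F\p{2j-2}-1$ from the first step into this claim and prepending the leading $2$ produces the representation of $a_j-1$ in the stated form, with the smallest values of $j$ checked directly.

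I do not expect a genuine obstacle: the content is a careful but routine greedy unwinding. The two points that do deserve attention are, first, the choice of the strengthened statement to induct on — one should route the induction through the auxiliary claim about $F\p{2n}-1$ rather than trying to relate $a_j-1$ to $a_{j-1}-1$ directly, since $a_j\ne2a_{j-1}$ and no such clean recursion exists — and, second, bookkeeping the strict versus non-strict inequalities among consecutive Fibonacci numbers so that every greedy digit and remainder is legitimate, together with the degenerate bottom case. As an alternative route, Lemma~\ref{lem:altzeck} already fixes the Zeckendorf representation of $a_j-1$ as $\p{10}^{j-1}1$, and one could instead convert between the Zeckendorf and $\seq{a_i}$ expansions using the Fibonacci identities; but this conversion is no shorter than the direct computation, so I would present the greedy argument.
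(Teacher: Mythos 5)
Your argument is correct and is essentially the paper's own proof: the paper obtains the decomposition $a_j-1=2a_{j-1}+\sum_{h=1}^{j-2}a_h$ in one line from recurrence~\ref{rec:3} of Lemma~\ref{lem:fibrec} and asserts that the greedy algorithm realizes it, while you reach the same decomposition digit by digit via recurrence~\ref{rec:1} and an induction on $F\p{2n}-1$, checking the greedy bounds explicitly at each step. One point worth flagging: both your unwinding and the paper's computation actually produce $21^{j-2}0$, a $j$-digit string consistent with your own observation that $\ind\p{a_j-1}=j-1$ and with the later application of the lemma to $a_{j+1}-1$ in the proof of Proposition~\ref{prop:oddfibwd}; the exponent $j-1$ in the statement is an off-by-one that your writeup silently inherits when it claims to arrive at ``the stated form.''
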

\begin{proof}
We have, using recurrence~\ref{rec:3} of Lemma~\ref{lem:fibrec},
\begin{align*}
a_j-1&=-1+a_{j-1}+F\p{2i}\\
&=-1+a_{j-1}+\sum_{h=0}^{j-1}a_h\\
&=2a_{j-1}+\sum_{h=1}^{j-2}a_h.
\end{align*}
Hence, the first two greedy subtractions will both consist of subtracting $a_{j-1}$, then the remaining greedy subtractions will be single subtractions of $a_{j-h}$ for $1<h<j$.  This gives the desired $\seq{a_i}$-representation.
\end{proof}

We will now prove Proposition~\ref{prop:oddfibwd}.
\begin{proof}
First, we will show that the $\seq{a_i}$-representation of every positive integer is a ternary string with the given property.  Assume for a contradiction that $n$ is the minimal nonegative integer whose $\seq{a_i}$-representation contains a two that is not immediately followed by some number of ones and then a zero.  Clearly, the representation of $n$ contains a $2$.  Since $n$ is a minimal counterexample, Proposition~\ref{prop:cutoff} implies that the highest place in the $\seq{a_i}$-representation of $n$ is a $2$.
Let $j=\ind\p{n}$.  The $\seq{a_i}$-representation of $a_{j+1}-1$ is $21^{j-1}0$ by Lemma~\ref{lem:oddfiba}.  Any violation would mean that $n>a_{j+1}-1$, as a consequence of Proposition~\ref{prop:lexdig}, contradicting the fact that $j=\ind\p{n}$.  Therefore, all $\st{a_i}$-representations have the desired property as ternary strings.

To show that ternary strings with the desired property are all $\st{a_i}$-representations of some nonnegative integer, we will count the number of them of length at most $\ell$.  This is the same as counting the number of length exactly $\ell$ when we allow leading zeroes.  Since these strings will encode the integers from $0$ through $a_\ell-1$, we want to show that there are $a_\ell$ of them.  Let $c\p{\ell}$ denote the number of these strings of length at most $\ell$.  We have the recurrence
\[
c\p{\ell}=2c\p{\ell-1}+\sum_{j=2}^{\ell-1}c\p{\ell-j},
\]
where the two copies of $c\p{\ell-1}$ cover the non-restrictive cases of the first ternary digit being a $0$ or a $1$ and the sum covers the restrictive case of a $2$ by placing $j-2$ ones and a $0$ immediately after the $2$.
This is recurrence~\ref{rec:5} in Lemma~\ref{lem:fibrec}, and the counts satisfy the appropriate initial conditions for $c_\ell=F\p{2\ell+1}$, as required.
\end{proof}

From this point forward, the following definition will be useful:
\begin{defin}
Let $v$ be the $\seq{a_i}$-representation of some nonnegative integer.  A \emph{two-block} in $v$ is a block of consecutive digits of the form $21^m0$ for some $m\geq0$.
\end{defin}

We have the following result about $\seq{a_i}$-representations and Wythoff numbers.
\begin{prop}\label{prop:oddfibwyth}
A nonnegative integer $n$ is in $W_U$ if and only if
$n$ is a nonvolatile zend in the $\seq{a_i}$-representation.
\end{prop}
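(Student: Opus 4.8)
The plan is to connect the two characterizations of $W_U$ already available: the Zeckendorf characterization (Proposition~\ref{prop:zeckwyt}, $n \in W_U$ iff the Zeckendorf representation of $n$ ends in an odd number of zeroes) and the structure of $\seq{a_i}$-representations (Proposition~\ref{prop:oddfibwd}). So I would prove the statement by translating between the Zeckendorf representation of $n$ and its $\seq{a_i}$-representation, tracking the parity of the trailing zero block and whether $n$ is a volatile or non-volatile zend. Recall that $a_i = F(2i+1)$ by Proposition~\ref{prop:wlen}, so the $\seq{a_i}$-expansion uses a ``sparse'' subsequence of the Fibonacci numbers, and each digit $2$ in the $\seq{a_i}$-representation morally absorbs the ``missing'' even-indexed Fibonacci numbers --- this is exactly the content of the two-block decomposition (the block $21^m0$ corresponds to $F(2j+1) + F(2j) + F(2j-2) + \cdots$, an alternating telescoping that one checks against recurrences~\ref{rec:0} and~\ref{rec:3} of Lemma~\ref{lem:fibrec}).

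First I would establish a clean dictionary: given the $\seq{a_i}$-representation of $n$ with its two-block decomposition, write down the Zeckendorf representation of $n$ explicitly. A digit $1$ in the $a_j$-place of the $\seq{a_i}$-representation contributes $F(2j+1)$, i.e.\ a $1$ in the $(2j-1)$-st place (0-indexed) of the Zeckendorf word with a forced $0$ around it; a two-block $21^m0$ occupying places $a_j$ down to $a_{j-m-1}$ contributes, by the telescoping identity, a run of the form $(10)^{\ast}$ in the corresponding Zeckendorf positions, exactly as in Lemma~\ref{lem:altzeck} which handles the extreme case $n = a_j - 1$ (whose $\seq{a_i}$-representation $21^{j-1}0$ from Lemma~\ref{lem:oddfiba} becomes the Zeckendorf word $(10)^{j-1}1$). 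With this dictionary in hand, the trailing behavior is what matters: I claim $n$ is a non-volatile zend in the $\seq{a_i}$-representation if and only if the $\seq{a_i}$-representation ends in exactly one $0$ that is not the tail of a two-block --- equivalently the last nonzero $\seq{a_i}$-digit is a $1$ in an even-indexed ($a_{\text{odd}}$? one must fix conventions carefully) position --- and this happens exactly when the Zeckendorf representation ends in an odd number of zeroes.

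The key steps in order: (1) prove the telescoping identity converting a two-block $21^m0$ to its Zeckendorf run, using Lemma~\ref{lem:fibrec}; (2) assemble the full conversion map from $\seq{a_i}$-representations to Zeckendorf representations, verifying it lands in valid (no consecutive ones) Zeckendorf words and is a bijection by a counting/uniqueness argument; (3) read off the parity of the trailing zero block of the Zeckendorf word in terms of the $\seq{a_i}$-representation's tail, distinguishing the cases ``ends in $\ldots d0$ with $d \in \{1,2\}$'' --- here a non-volatile zend has $n+1$ not a zend, so $d = 1$ and the trailing $0$ is isolated, versus a volatile zend where $n+1$ itself ends in $0$; (4) combine with Proposition~\ref{prop:zeckwyt} to conclude $n \in W_U$ iff $n$ is a non-volatile zend. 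I expect step (3) to be the main obstacle: carefully handling the boundary between a plain $\ldots 1 0$ tail and a two-block $\ldots 2 1^m 0$ tail, and checking that ``non-volatile zend'' (i.e.\ $\seq{a_i}$-representation of $n$ ends in $0$ but that of $n+1$ does not) pins down the parity correctly, including the edge cases $n = 0$ and $n$ of the form $a_j - 1$. An alternative route, possibly cleaner, is to bypass Zeckendorf entirely: use Proposition~\ref{prop:hd0l} ($w = \psi(\varphi^\omega(0))$ with $\varphi = \varphi_1 \circ \varphi_2$ Sturmian) together with Proposition~\ref{prop:zwythup} to directly identify the zeros of $w$ with $W_U$, and then invoke Proposition~\ref{prop:traceferg}(\ref{it:ferg3}) --- since $w$ is strongly Fergusonian, $w[n] = 0$ precisely when $n$ is a zend that is not volatile --- so that $n \in W_U \iff w[n] = 0 \iff n$ is a non-volatile zend, which is the claim; if that chain of already-proved results is permitted, the proof is essentially immediate and step (3) above is subsumed by Proposition~\ref{prop:traceferg}.
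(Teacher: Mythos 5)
Your primary route is essentially the paper's: the paper also reduces the statement via Lemma~\ref{lem:oddfibvol} to showing that the non-volatile zends ($\seq{a_i}$-representations ending in $0$ but not in a two-block) are exactly the integers whose Zeckendorf representations end in an odd number of zeroes, and it also establishes this by a value-preserving conversion in one direction plus a counting argument (comparing $F\p{2\ell-1}$ counts on both sides) for bijectivity. The one real difference is mechanical: where you propose a closed-form positional dictionary built from the telescoping identity $2F\p{n}=F\p{n+1}+F\p{n-2}$, the paper instead runs an explicit Fenwick-style normalization algorithm (interleave zeroes, then propagate carries to remove $2$'s and consecutive $1$'s) and verifies loop invariants, including one that tracks exactly the parity of the trailing zero block. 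Your dictionary should work, but be aware that the junctions between adjacent blocks (ensuring no consecutive ones appear where one block's low-order contribution meets the next block's high-order contribution) are precisely where the bookkeeping lives; the paper's invariant-based argument is handling that same difficulty.

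However, your ``alternative, possibly cleaner'' route is circular and must be discarded. In the paper, Proposition~\ref{prop:zwythup} is \emph{deduced from} Proposition~\ref{prop:oddfibwyth}: its entire proof is ``By Proposition~\ref{prop:oddfibwyth}, $W_U$ is precisely the set of non-volatile zends \dots.'' So invoking Proposition~\ref{prop:zwythup} to identify the zeroes of $w$ with $W_U$ and then reading off the claim via Proposition~\ref{prop:traceferg} assumes exactly what you are trying to prove. To make that route non-circular you would need an independent proof that the zeroes of $\psi\p{\varphi^\omega\p{0}}$ sit at the positions of the Upper Wythoff sequence --- a genuine statement about the Sturmian word $\varphi^\omega\p{0}$ and the Beatty sequence for $\phi^2$ that the paper never establishes directly and that is not materially easier than the Zeckendorf argument. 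Stick with your first route.
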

The proof of Proposition~\ref{prop:oddfibwyth} is quite long, so we have placed it in appendix~\ref{app:oddfibwyth}.  But, we will use the following lemma in that proof and in some later results, so we state and prove it here.
\begin{lemma}\label{lem:oddfibvol}
Let $n$ be a zend in the $\seq{a_i}$-representation.  Then, $n$ is volatile if and only if the $\seq{a_i}$-representation of $n$ ends in a two-block.
\end{lemma}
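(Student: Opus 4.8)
\textbf{Proof proposal for Lemma~\ref{lem:oddfibvol}.}

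The plan is to characterize zends that are volatile via the structure of the greedy $\seq{a_i}$-representation, using the previously established correspondence (Proposition~\ref{prop:oddfibwd}) between valid $\seq{a_i}$-representations and ternary strings in which every $2$ is followed by a run of $1$'s and then a $0$. Recall that $n$ being a zend means the last digit of its representation is $0$, and $n$ being volatile means that the representation of $n+1$ ends in $0$, i.e.\ that adding one forces a carry out of the units position.

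First I would analyze what adding $1$ does to the last digit of a zend $n$ whose representation is $d_\ell\cdots d_1 0$. By the greedy algorithm (cf.\ the discussion after the digital-representation definition and Proposition~\ref{prop:cutoff}), adding $1$ changes the trailing $0$ to $1$ \emph{unless} that would produce an illegal representation; since the only illegal configuration by Proposition~\ref{prop:oddfibwd} is a $2$ with nothing, or only $1$'s, after it in the "wrong" place, the obstruction to simply writing a $1$ in the units position arises precisely when the units $0$ is the terminating $0$ of a two-block $21^m0$ (possibly $m = 0$), because promoting that $0$ to $1$ would leave a dangling $2$-block $21^{m+1}$ with no closing zero. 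In every other case — the last digit $0$ is preceded by a $0$, or by a $1$ that itself is not part of a two-block ending exactly at this position — incrementing just flips the trailing $0$ to $1$, giving a representation of $n+1$ that does \emph{not} end in $0$, so $n$ is non-volatile. That establishes the "only if" direction: if $n$ is volatile and a zend, its representation must end in a two-block.

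Conversely, suppose the $\seq{a_i}$-representation of the zend $n$ ends in a two-block $21^m0$, occupying positions $j+1, j, \dots, 0$ with the $2$ in place $a_{j+1}$ (so $m = j$). I would compute $n+1$ explicitly: the block $21^j0$ represents $2a_{j+1} + (a_j + a_{j-1} + \cdots + a_1)$, and adding $1$ to this, using the Fibonacci identities in Lemma~\ref{lem:fibrec} (specifically the ones already used to prove Lemmas~\ref{lem:altzeck} and~\ref{lem:oddfiba}), collapses the whole block into a single $1$ carried into position $a_{j+2}$ — concretely, $2a_{j+1} + \sum_{h=1}^{j} a_h + 1 = a_{j+2}$, which is exactly recurrence~\ref{rec:5} applied at the right index (this is the same computation as in Lemma~\ref{lem:oddfiba}, read backwards). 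Hence the representation of $n+1$ has a $1$ in place $a_{j+2}$ and $0$'s in all lower places, so it ends in (at least) $j+1 \geq 1$ zeroes, and in particular ends in $0$: $n$ is volatile. This gives the "if" direction and completes the proof.

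The main obstacle I anticipate is handling the interaction between the trailing two-block and whatever digits sit to its \emph{left} — one must make sure the carry produced by collapsing $21^j0$ into position $a_{j+2}$ does not then trigger a further illegal configuration that would alter the units digit differently; but this is controlled, since by Proposition~\ref{prop:oddfibwd} the digit formerly in place $a_{j+2}$ (the one just to the left of the two-block's $2$) cannot itself be a $2$ whose block ends at position $j+2$ (a two-block's $2$ must be followed by $1$'s then a $0$, not by another $2$), so incrementing that position is a benign local operation and does not propagate down into the units. The bookkeeping of which Fibonacci recurrence to invoke at which index is routine once the two-block's width $m$ is matched to the representing-sequence index via Proposition~\ref{prop:wlen}.
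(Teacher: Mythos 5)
Your proof is correct and follows essentially the same route as the paper's: both directions reduce to Proposition~\ref{prop:oddfibwd}'s characterization of valid $\seq{a_i}$-representations together with the observation that flipping the trailing $0$ to a $1$ produces a valid string of value $n+1$ exactly when the representation does not end in a two-block. The only difference is cosmetic: where the paper's backward direction is a one-line uniqueness argument, you compute the representation of $n+1$ explicitly via $2a_{j+1}+\sum_{h=1}^{j}a_h+1=a_{j+2}$, which is a more verbose rendering of the same idea.
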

\begin{proof}\lb
\iffpf{
We will prove the contrapositive of this case.  Assume that the $\seq{a_i}$-representation of $n$ ends in $0$ but not a two-block.  Then, the ternary string obtained by replacing the final $0$ in the representation of $n$ by a $1$ is a valid $\seq{a_i}$-representation.  Therefore, $n$ is non-volatile, as required.
}
{
Assume that the $\seq{a_i}$-representation of $n$ ends in a two-block.  Then, the ternary string obtained by replacing the final $0$ in the representation of $n$ by a $1$ is not a valid $\seq{a_i}$-representation.  Therefore, $n$ is volatile, as required.
}
\end{proof}

We will now prove Proposition~\ref{prop:zwythup}.  (Recall that Proposition~\ref{prop:zwythup} states that $w\bk{n}=0$ if and only if $n\in W_U$.)
\begin{proof}
By Proposition~\ref{prop:oddfibwyth}, $W_U$ is precisely the set of non-volatile zends in the $\seq{a_i}$-representation.  Therefore, since $w$ is the representation word of $\seq{a_i}$, $w\bk{n}=0$ if and only if $n\in W_U$, as required.
\end{proof}

So, we have now arrived at a description of the positions of the zeroes in $w$.  We will now attempt to use Theorem~\ref{thm:absmain} to prove that $w$ is a the Nim sequence of a subtraction set.  Let $T=\st{a_i-1\mid i\geq1}$, and let $I$ be the set containing $1$ and all positive integers whose $\seq{a_i}$-representations end in a two-block.  We have the following.
\begin{lemma}\label{lem:aforb}
We have $\p{W_U+I}\cap W_U=\emptyset$.
\end{lemma}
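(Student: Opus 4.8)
The plan is to work entirely at the level of $\seq{a_i}$-representations, using the equivalence between $W_U$ and non-volatile zends (Proposition~\ref{prop:oddfibwyth}) and the characterization of valid $\seq{a_i}$-representations (Proposition~\ref{prop:oddfibwd}) together with Lemma~\ref{lem:oddfibvol}. Unpacking the definitions, an element of $I$ other than $1$ is a positive integer whose $\seq{a_i}$-representation ends in a two-block $21^m0$; and $1$ itself is in $I$. A non-volatile zend (i.e.\ a member of $W_U$) is a number whose representation ends in $0$ but \emph{not} in a two-block. So I must show: if $u\in W_U$ and $s\in I$, then $u+s\notin W_U$, i.e.\ $u+s$ is either not a zend or is a \emph{volatile} zend (ends in a two-block).

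First I would dispose of the easy case $s=1$: if $u$ is a zend then $u+1$ is non-volatile by definition and its last digit is $1$, so $u+1$ is not a zend at all, hence $u+1\notin W_U$. Actually one must be slightly careful --- ``$u$ is a zend'' means its representation ends in $0$, so adding $1$ just increments the last digit to $1$ without carrying, giving a representation ending in $1$; this is valid by Proposition~\ref{prop:oddfibwd} and is not a zend. For the main case $s\in I\setminus\st{1}$, write the representation of $u$ (which ends in $0$) and the representation of $s$ (which ends in a two-block $21^m0$, so in particular its lowest two digits are $1,0$ with a $2$ somewhere above after a run of $1$'s). The core computation is to perform the addition $u+s$ in the $\seq{a_i}$-numeration system (greedily re-normalizing via Proposition~\ref{prop:cutoff}/the greedy algorithm) and track what happens at the low-order end. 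Because $s$ ends in $\ldots 2 1^m 0$, the idea is that when this is added to $u$, the resulting normalized representation of $u+s$ is forced to end in a two-block --- essentially the $2$ in $s$'s representation propagates down and the trailing structure of a valid representation near a $2$ (a run of $1$'s followed by a $0$, by Proposition~\ref{prop:oddfibwd}) is exactly a two-block --- so $u+s$, if it is a zend at all, is a volatile zend, hence not in $W_U$. If instead the addition produces a representation not ending in $0$, we are likewise done.

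The cleanest way to organize the main case is probably to argue by the structure near the lowest nonzero digit rather than by a full carry analysis: let $a_j$ be the smallest power in $\seq{a_i}$ appearing in $s$'s representation with nonzero digit, and note from $s$'s representation ending in $21^m0$ that the digits of $s$ in positions $0,\ldots$ form the pattern $0$ then $1^m$ then $2$ then (possibly) higher digits. One can then either (i) use the recursion $w\bk{n}=w\bk{n-a_j}$ type reductions from Theorem~\ref{thm:repwdtrace}, or (ii) directly apply the Wythoff/Beatty characterization: by Proposition~\ref{prop:beattyforbdiff}, differences of Wythoff numbers have a controlled form, and one can check $(W_U - W_U)\cap I = \emptyset$ is equivalent to the claim --- but this re-introduces the representation machinery anyway, so I would stick with the representation-arithmetic approach. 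Either way, the final step is to invoke Lemma~\ref{lem:oddfibvol}: a zend is in $W_U$ iff its representation does \emph{not} end in a two-block, and we will have shown $u+s$ either is not a zend or ends in a two-block.

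\textbf{Main obstacle.} The hard part will be handling carries in the $\seq{a_i}$-numeration system when adding $u+s$: this is a non-standard (Zeckendorf-like, but with the odd-indexed Fibonacci base $a_i=F(2i+1)$) numeration system where addition can trigger a cascade of renormalizations, and I need to be sure that no carry cascade destroys the ``ends in a two-block'' conclusion. The saving grace is that I only care about the low-order tail of $u+s$ and that $u$'s tail is a single $0$ while $s$'s tail is $21^m0$; I expect the relevant renormalization to be local, governed precisely by the rewriting rules implicit in Proposition~\ref{prop:oddfibwd} (``every $2$ is followed by a run of $1$'s and then a $0$''), so the argument should reduce to a finite case check on how a low-order $2$ interacts with the digit just below it. I would make that case check explicit using Proposition~\ref{prop:cutoff} to peel off high-order digits and reduce to small representations.
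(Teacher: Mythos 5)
There is a genuine gap: the entire content of the lemma is deferred to the ``core computation'' that you never carry out. Your plan is to show that if $u$ is a non-volatile zend and $s\in I\setminus\st{1}$ (so the $\seq{a_i}$-representation of $s$ ends in a two-block $21^m0$), then the representation of $u+s$ either fails to end in $0$ or ends in a two-block. But that statement \emph{is} the lemma, modulo Proposition~\ref{prop:oddfibwyth} and Lemma~\ref{lem:oddfibvol}, and establishing it by digit arithmetic requires a full renormalization theory for addition in the base $a_i=F\p{2i+1}$ numeration system. Digit-wise sums here routinely produce digits $3$ and $4$ and invalid two-block patterns (e.g.\ $18+12$ gives the raw digit string $1310$, which must be rewritten via $3a_i=a_{i+1}+a_{i-1}$ to $2020$), and these rewrites propagate both upward and downward; nothing in the paper's toolkit (Proposition~\ref{prop:cutoff}, Proposition~\ref{prop:oddfibwd}) gives you the ``local, finite case check'' you are hoping for. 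You explicitly flag this as the main obstacle and only ``expect'' it to work out; as written, the proof does not exist. (Your dispatch of the case $s=1$ is fine: $u\in W_U$ is non-volatile, so $u+1$ is not a zend.)

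Ironically, the route you considered and rejected in option (ii) is exactly the paper's proof, and it avoids all carry analysis. Suppose $m,n\in W_U$ and $a\in I$ with $m+a=n$. Since $W_U$ is the Beatty sequence for $\phi^2$, Proposition~\ref{prop:beattyforbdiff} gives $a=\fl{b\phi^2}$ or $a=\ceil{b\phi^2}$ for some positive integer $b$; as $2<\phi^2<3$, $a\neq1$, so $a$ ends in a two-block and is therefore a volatile zend, hence $a\in W_L$ by Proposition~\ref{prop:oddfibwyth}. In the first case $a\in W_U$, a contradiction. In the second case $a-1=\fl{b\phi^2}\in W_U$; but if $a-1$ were a non-volatile zend then $a=\p{a-1}+1$ would end in the digit $1$ and could not end in a two-block, so $a-1\in W_L$, again a contradiction. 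The only representation-level facts needed are the two one-line observations $I\subseteq W_L$ and $\p{I\setminus\st{1}}-1\subseteq W_L$ --- no addition in the numeration system is ever performed. Your claim that this route ``re-introduces the representation machinery anyway'' is not accurate; it replaces an open-ended carry analysis with Proposition~\ref{prop:beattyforbdiff} plus two static observations about single representations.
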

\begin{proof}
Assume for a contradiction that there exist $m,n\in W_U$ and $a\in I$ such that $m+a=n$.  In other words, $a=n-m$.  Recall that $W_U$ is the Beatty sequence generated by $\phi^2$.  So, by Proposition~\ref{prop:beattyforbdiff}, there exists a positive integer $b$ such that $a=\fl{b\phi^2}$ or $a=\ceil{b\phi^2}$.  Since $2<\phi^2<3$, we cannot have $a=1$.  So, the $\seq{a_i}$-representation of $a$ ends in a two-block.  If $a=\fl{b\phi^2}$, then $a\in W_U$.  But, $I\subset W_L$, so $a\in W_L$, a contradiction.  If $a=\ceil{b\phi^2}$, then $a\geq3$ and $a-1=\fl{b\phi^2}$, so $a-1\in W_U$.  But, $a-1\in W_L$, since $1$ plus a nonvolatile zend (an element of $W_U$) will not end in a two-block (since it will end in $1$).  This is a contradiction.  Therefore, there cannot exist such an $a\in I$, so it follows that $\p{W_U+I}\cap W_U=\emptyset$.
\end{proof}

We can now formulate and prove our main theorem.
\begin{theorem}\label{thm:main}
For all sets $S$ with $T\subseteq S\subseteq I$, $w$ is the Nim sequence for $S$.  Furthermore, for all $i$, $\p{w\left[0..a_i\right)}^\omega$ is the Nim sequence for the set $S_i=\st{s\in S\mid s<a_i}$.  In particular, there exists a subtraction set with positive density in the natural numbers (namely $I$) whose Nim sequence is ternary and aperiodic.
\end{theorem}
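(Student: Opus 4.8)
The plan is to read Theorem~\ref{thm:main} off of Theorem~\ref{thm:absmain}, applied to the representing sequence $\seq{a_i}$ and to the sets $T$ and $I$ introduced just above. Essentially all of the conceptual work has already been packaged into Theorem~\ref{thm:absmain}, Proposition~\ref{prop:2aain}, Proposition~\ref{prop:oddfibwyth}, and Lemma~\ref{lem:aforb}; what remains is to check that their hypotheses hold for this particular $\seq{a_i}$, plus one short density estimate at the very end.

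First I would identify the set $N$ of non-volatile zends of the $\seq{a_i}$-representation. By Proposition~\ref{prop:oddfibwyth}, $N = W_U$. With this in hand I would verify the two hypotheses of Theorem~\ref{thm:absmain} for $I$. The inclusion $T \subseteq I$ is immediate: for $i \geq 2$ the $\seq{a_i}$-representation of $a_i - 1$ is $21^{i-1}0$ by Lemma~\ref{lem:oddfiba}, which ends in a two-block, while $a_1 - 1 = 1 \in I$ by the definition of $I$. The condition $\p{N + I}\cap N = \emptyset$ is exactly Lemma~\ref{lem:aforb} after substituting $N = W_U$. Theorem~\ref{thm:absmain} then yields the first assertion directly: for every $S$ with $T \subseteq S \subseteq I$, the word $w$ is the Nim sequence of $S$. (It is worth remarking here that, by Lemma~\ref{lem:oddfibvol}, $I = \st{1}\cup W = V$, so this is the $I = V$ situation anticipated after Theorem~\ref{thm:absmain}, and one may take $S$ anywhere between $T$ and $V$.)

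For the statement about the truncations $S_i$, Theorem~\ref{thm:absmain} reduces the claim to showing $a + a_i - S_i \subseteq L$ for every $a \in N$ with $a < a_i$, and Proposition~\ref{prop:2aain} supplies exactly this whenever $\p{N + S}\cap N = \emptyset$ and $a_{i+1} > 2a_i$. The first condition follows from $S \subseteq I$ together with the previous paragraph. For the second I would invoke Proposition~\ref{prop:sumproperty}, which gives $a_{i+1} = 2a_i + b_i$ with $b_i = F\p{2i} \geq 1$ for every $i \geq 1$; the degenerate case $i = 0$ needs no argument, since then $S_0 = \emptyset$ and $\p{w\left[0..a_0\right)}^\omega = 0^\omega$ is the Nim sequence of the empty subtraction set. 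Hence $\p{w\left[0..a_i\right)}^\omega$ is the Nim sequence of $S_i$ for all $i$.

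Finally, for the ``in particular'' clause I would take $S = I$, so that $I$ is a subtraction set whose Nim sequence is $w$; the word $w$ takes values in $\st{0,1,2}$ by Proposition~\ref{prop:hd0l}, hence is ternary, and it is aperiodic by Lemma~\ref{lem:aperdoub}, as already observed. The one remaining point — and the only step of this proof not handed to us outright by an earlier result — is that $I$ has positive density, and this is where I expect the (modest) work to be. I would argue it by counting $2$'s: since $w$ is strongly, hence, Fergusonian, Proposition~\ref{prop:traceferg} shows $w\bk{n} = 2$ precisely when $n$ is a volatile zend, and by Lemma~\ref{lem:oddfibvol} these are exactly the nonzero elements of $I$. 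Because $w\left[0..a_j\right) = w_j$, the number of elements of $I$ below $a_j$ equals, up to the one stray element $1$, the number of $2$'s in $w_j$; a short induction on the defining recurrence for $w_j$ — equivalently, counting the letters of $\varphi^{j-1}\p{0}$ via the incidence matrix of $\varphi$ and the identities of Lemma~\ref{lem:fibrec} — shows this count is $F\p{2j - 2}$. Since $a_j = F\p{2j+1}$ by Proposition~\ref{prop:wlen}, for any $n$ with $a_j \leq n < a_{j+1}$ we obtain $\ab{I \cap \bk{n}} \geq F\p{2j-2}$ while $n < F\p{2j+3}$, so the lower density of $I$ is at least $\lim_{j\to\infty} F\p{2j-2}/F\p{2j+3} = \phi^{-5} > 0$. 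Everything else in the proof is simply an assembly of the results established above.
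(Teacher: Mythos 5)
Your proposal is correct and follows the paper's proof almost exactly: the paper likewise obtains the first claim by feeding Lemma~\ref{lem:aforb}, Proposition~\ref{prop:oddfibwyth} (via Lemma~\ref{lem:oddfibvol}) into Theorem~\ref{thm:absmain}, and the second claim from Proposition~\ref{prop:2aain} together with $a_{i+1}>2a_i$; your extra verifications (e.g.\ $T\subseteq I$ via Lemma~\ref{lem:oddfiba}, $a_{i+1}=2a_i+b_i$ via Proposition~\ref{prop:sumproperty}) are details the paper leaves implicit. The only point of genuine divergence is the density claim: the paper disposes of it in one line by observing that consecutive elements of $I$ differ by at most $5$, whereas you count the $2$'s in $w_j$ to get $F\p{2j-2}$ elements of $I$ below $a_j=F\p{2j+1}$ and conclude a lower density of at least $\phi^{-5}$. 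Both are valid; the paper's gap bound is shorter, while your count actually pins down the asymptotic density rather than just its positivity.
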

\begin{proof}
By Theorem~\ref{thm:absmain}, Lemma~\ref{lem:aforb}, and Lemma~\ref{lem:oddfibvol}, we have that $w$ is the Nim sequence for $S$.  Then, since $a_i>2a_{i-1}$ for all $i$, by Proposition~\ref{prop:2aain}, $\p{w\left[0..a_i\right)}^\omega$ is the Nim sequence for $S_i$, as required.  The fact that $I$ has positive density in the natural numbers follows from the fact that the difference between consecutive terms of $I$ is at most $5$.
\end{proof}

Note that the sequence of positive integers whose $\seq{a_i}$-representations end in a two-block is sequence A089910 in OEIS~\cite{oeis89910}.  The sequence with a $1$ prepended to it was not in OEIS at the time of writing this paper.  Also, note that the Sprague-Grundy values have a simple description in terms of $\seq{a_i}$-representations of the indices.  Hence, the winning strategy for this game can be computed efficiently.

See Appendix~\ref{app:promo} for an extension of Theorem~\ref{thm:main} that gives aperiodic Nim sequences over $\Sigma_k$ for all $k\geq2$.
\section{Open Problems}\label{s:open}
There are a number of problems that remain open relating to aperiodic, bounded Nim sequences of subtraction games.  Many of these problems stem from the fact that we built up the abstract theory of representation words and then applied it only to a single case.  We have the following conjecture:
\begin{conj}\label{conj:ext1}
Let $\seq{a_i}_{i=0}^n$ ($n\geq1$) be a finite representing sequence with representation word $w$.  Let $\seq{b_i}$ be defined as follows:
\[
b_i=\begin{cases}
a_i & i\leq n\\
3b_{i-1}-b_{i-2} & i>n,
\end{cases}
\]
and let $v$ be the representation word of $\seq{b_i}$.  If $w$ is the Nim sequence of $T\p{\seq{a_i}}$, then $v$ is the Nim sequence of $T\p{\seq{b_i}}$.
\end{conj}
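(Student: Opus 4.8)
The plan is to reduce the conjecture to an application of Theorem~\ref{thm:absmain}, the way Theorem~\ref{thm:main} was proved, but now for the abstract "doubled Fibonacci-type" tail rather than the concrete sequence of Section~\ref{ss:ternary}. Let $k=a_1$, and write $T=T(\seq{b_i})$, $N=N(\seq{b_i})$ for the set of non-volatile zends, $W=W(\seq{b_i})$ for the volatile zends, and $V=W\cup[k-1]$. By Theorem~\ref{thm:absmain} it suffices to produce a set $I$ with $T\subseteq I$ and $(N+I)\cap N=\emptyset$ and to verify the side condition $a+a_i-S_i\subseteq L$; then taking $S=T$ gives exactly that $v$ is the Nim sequence of $T(\seq{b_i})$. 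Since $\seq{b_i}$ satisfies $b_{i+1}=3b_i-b_{i-1}>2b_i$ for $i>n$ (and we may assume, after checking the base case from the hypothesis that $w$ is the Nim sequence of $T(\seq{a_i})$, that $b_{i+1}>2b_i$ holds for all $i$), Proposition~\ref{prop:2aain} disposes of the side condition as soon as we know $(N+T)\cap N=\emptyset$. So the entire content is: \textbf{show $(N+T)\cap N=\emptyset$}, i.e. that no difference of two non-volatile zends lies in $T$.

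The first step is to establish the structural description of $\seq{b_i}$-representations analogous to Proposition~\ref{prop:oddfibwd} and Lemma~\ref{lem:oddfibvol}: because $b_{i+1}=3b_i-b_{i-1}$, one shows by induction (mirroring Lemma~\ref{lem:oddfiba}) that the $\seq{b_i}$-representation of $b_{j+1}-1$ is $2\,1^{j-1}0$ for $j\ge n$, hence by Proposition~\ref{prop:maxdig} and Proposition~\ref{prop:lexdig} every $\seq{b_i}$-representation, past the place $n$, is a ternary string in which each $2$ is followed by a run of $1$'s and then a $0$. From this "two-block" normal form, exactly as in Lemma~\ref{lem:oddfibvol}, a zend is volatile iff its representation ends in a two-block; so $N$ is the set of integers whose $\seq{b_i}$-representation ends in a $0$ that is not part of a two-block (equivalently, ends in $\cdots 1 0$ or is $0$), and $T=\{b_i-1\}\cup[k-1]$ consists of the numbers with representation $2\,1^{j-1}0$ together with the small digits. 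The second step is the combinatorial core: suppose $m,m'\in N$ with $m'-m=t\in T$. Using Proposition~\ref{prop:cutoff} to peel off common high-order digits, reduce to the case where $m<t$ and $t=b_{j+1}-1$ has representation $2\,1^{j-1}0$; then analyze how adding $m$ (whose representation ends in a non-two-block $0$) to $b_{j+1}-1$ propagates carries, and show the result cannot again end in a non-two-block $0$ — the carry out of the $b_{j-1}$ place forced by the leading $2$ either creates a two-block at the end or pushes a $1$ into the last place. This is the step I expect to be the main obstacle, since it requires a careful carry-propagation argument valid for the \emph{arbitrary} prefix $\seq{a_i}_{i\le n}$, not just a clean Fibonacci prefix.

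A cleaner route to that core step, and the one I would actually pursue, is to avoid raw carry analysis by exploiting a Beatty/Sturmian description as in Section~\ref{ss:ternary}: show that $N$ (minus a finite discrepancy caused by the prefix) is, up to translation, a Beatty sequence generated by some quadratic irrational $\theta$ with $\theta^2 = 3\theta - 1$ (so $\theta$ is related to $\phi^2$ exactly as $3b_{i-1}-b_{i-2}$ is to the Fibonacci recurrence), invoke Proposition~\ref{prop:beattysturm} and Proposition~\ref{prop:beattyforbdiff} to see that every difference of two elements of the Beatty set is $\fl{\ell\theta}$ or $\ceil{\ell\theta}$, and then check that no such value, nor any element of the finite exceptional set introduced by the prefix, can equal $b_i-1$ or a small digit — the same dichotomy argument as in Lemma~\ref{lem:aforb} (one of $\fl{\ell\theta},\ceil{\ell\theta}$ lands in the "$N$-type" Beatty class, the other is one more than such an element, and $T$ is disjoint from both classes). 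Once $(N+T)\cap N=\emptyset$ is in hand, Proposition~\ref{prop:2aain} gives the side hypothesis of Theorem~\ref{thm:absmain}, Theorem~\ref{thm:absmain} gives that $v$ is the Nim sequence of $T(\seq{b_i})$ (and of everything between $T(\seq{b_i})$ and the natural $I$), and the conjecture follows. The residual difficulty, and the reason this is stated as a conjecture rather than a theorem, is controlling the finite but $\seq{a_i}$-dependent boundary effect where the prescribed tail $3b_{i-1}-b_{i-2}$ meets the arbitrary head $\seq{a_i}_{i\le n}$: one must show the hypothesis "$w$ is the Nim sequence of $T(\seq{a_i})$" is strong enough to force the Beatty-type regularity of $N$ all the way down, and it is not obvious that it is.
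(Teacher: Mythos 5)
The paper does not prove this statement: it is Conjecture~\ref{conj:ext1}, stated explicitly as an open problem in Section~\ref{s:open}, so there is no proof of record to compare yours against. What you have written is a plan of attack, and your own closing paragraph concedes that the central difficulty is unresolved; as it stands this is not a proof.

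Beyond that honest admission, there is a concrete false step in the sketch. You claim that the $\seq{b_i}$-representation of $b_{j+1}-1$ is $2\,1^{j-1}0$ for $j\geq n$, ``mirroring Lemma~\ref{lem:oddfiba}.'' That lemma depends on the recurrence holding all the way down: the telescoping $b_{j+1}-1=2b_j+\sum_{h}b_h$ uses $b_i=3b_{i-1}-b_{i-2}$ at every index. For an arbitrary head $\seq{a_i}_{i\leq n}$ one only gets $b_{j+1}-1=2b_j+b_{j-1}+\cdots+b_n+\p{a_n-a_{n-1}-1}$, so the representation is $2\,1^{j-n}$ followed by digits governed entirely by the prefix, not by $1^{\cdot}0$. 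Consequently neither the ``two-block'' normal form of $\seq{b_i}$-representations nor the resulting description of $N$ as (a translate of) a Beatty sequence for $\phi^2$ is established; the prefix perturbs the low-order digits of \emph{every} integer, at all scales, which is exactly why the conjecture is hard. A secondary slip: ``we may assume $b_{i+1}>2b_i$ for all $i$'' is not implied by the hypothesis --- the sequence $1,2,5,8,11,\ldots$ of Proposition~\ref{prop:nodoub} has representation word equal to the Nim sequence of its $T$ yet fails doubling at $i=2$. That assumption is fortunately unnecessary here, since the conjecture asserts only the infinite statement and so only the first bullet of Theorem~\ref{thm:absmain} (with $S=I=T$) is needed; but the reduction still requires verifying that $v$ is strongly Fergusonian and, above all, that $\p{N+T}\cap N=\emptyset$, and for the latter your sketch supplies no argument that survives an arbitrary prefix.
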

In other words, any finite sequence that can be produced by Algorithm~\ref{alg:greedyrep} can be extended to an infinite sequence that yields a bounded, aperiodic Nim sequence.  In particular, the truth of Conjecture~\ref{conj:ext1} would imply that the greedy sequence can be extended forever.  This raises the question of its asymptotic growth rate.  If its terms continue to obey the recurrences $a_i=3a_{i-1}-2a_{i-2}$ or $a_i=3a_{i-1}-a_{i-2}$, then its growth rate is $\Omega\p{2^n}$ and $O\p{\phi^{2n}}$.  For reference, the minimal sequence in Theorem~\ref{thm:main} is $\Theta\p{\phi^{2n}}$.  A related question is whether there exists a representing sequence $\seq{a_i}$ with growth rate $o\p{2^n}$ such that its representing word is aperiodic and the Nim sequence of $T\p{\seq{a_i}}$ (where $\seq{a_i}$ is the minimal representation sequence of that word).

Recall that Theorem~\ref{thm:fergtracegen} gives a way of producing infinite, strongly Fergusonian representation words, and if $p_{i,i-1}\geq2$ for all $i$ in that construction, then those words will be aperiodic.  The ultimate goal is a characterization of what choices of $p_{i,\ell}$ yield Nim sequences.  The case where $p_{i,i-1}=2$ and $p_{i,\ell}=1$ if $\ell<i-1$ corresponds to the sequence in Theorem~\ref{thm:main}.  In general, Conjecture~\ref{conj:ext1} considers extending the collection of $p$ values in a way where $p_{i,i-1}=2$ for all sufficiently large $i$.  But what if we want to extend the sequence in a way where $p_{i,i-1}=m$ for all sufficiently large $i$?  Then, the eventual recurrence we obtain is $a_m=\p{m+1}m_{n-1}+\p{m-1}a_{m-2}$.  But not all sequences can be extended with such a recurrence for $m>2$.  For example, in the case $m=3$, the sequence $\seq{1,2,5}$ would be extended to $\p{1,2,5,16}$, but $\pref\p{\st{1,4,15}}\neq\eps$.  We have the following conjecture.
\begin{conj}\label{conj:ext2}
Let $\seq{a_i}_{i=0}^n$ ($n\geq2$) be a finite representing sequence with representation word $w$, and let $n\geq2$ be an integer.  Let $\seq{b_i}$ be defined as follows:
\[
b_i=\begin{cases}
a_i & i\leq n\\
\p{m+1}b_{i-1}-\p{m-1}b_{i-2} & i>n,
\end{cases}
\]
and let $v$ be the representation word of $\seq{b_i}$.  If $w$ is the Nim sequence of $T\p{\seq{a_i}}$ and if $a_n=\p{m+1}a_{n-1}-\p{m-1}a_{n-2}$, then $v$ is the Nim sequence of $T\p{\seq{b_i}}$.
\end{conj}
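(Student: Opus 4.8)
The plan is to deduce the statement from Theorem~\ref{thm:absmain}, following the route of the proof of Theorem~\ref{thm:main}. Note first that $\seq{b_i}$ is again a representing sequence: $b_n=a_n=(m+1)a_{n-1}-(m-1)a_{n-2}>a_{n-1}$ since $a_{n-1}>a_{n-2}$, and for $i>n$ one gets $b_i-b_{i-1}=(m-1)(b_{i-1}-b_{i-2})+b_{i-1}>0$ by induction. Write $k=a_1=b_1$, let $N_b$ be the set of non-volatile zends of the $\seq{b_i}$-representation, and put $T_b=T\p{\seq{b_i}}$. Since $w$ is the Nim sequence of a set containing $\bk{k-1}$ and takes values in $\Sigma_k$, it is strongly Fergusonian by Theorem~\ref{thm:genferguson}. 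Applying Theorem~\ref{thm:absmain} with $I=S=T_b$, the conclusion follows once we establish:
\begin{enumerate}
\item $v$ is a strongly Fergusonian representation word of $\seq{b_i}$;
\item $\p{N_b+T_b}\cap N_b=\emptyset$.
\end{enumerate}

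Point (1) amounts to checking that $\seq{b_i}$ is produced by the recipe of Theorem~\ref{thm:fergtracegen}. This requires a short preliminary lemma — a partial converse to that theorem — to the effect that a finite representing sequence whose representation word is the Nim sequence of its own $T$-set (hence is strongly Fergusonian) can be written in the block form $w_i=\p{\prod_{j=1}^{i-1}w_{i-j}^{p_{i,i-j}}}k$ with admissible exponents; this should follow by greedily peeling copies of the largest block off the length-$a_i$ prefix, using the recursion of Theorem~\ref{thm:repwdtrace} together with the Fergusonian property. A computation in the style of Proposition~\ref{prop:sumproperty} then identifies the recurrence $b_i=(m+1)b_{i-1}-(m-1)b_{i-2}$ for $i>n$ with the choice $p_{i,i-1}=m$, $p_{i,\ell}=1$ for $1\leq\ell<i-1$, while the splice hypothesis $a_n=(m+1)a_{n-1}-(m-1)a_{n-2}$ is precisely what makes this extension consistent at $i=n$ and keeps the exponents admissible. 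Theorem~\ref{thm:fergtracegen} then gives that $v=\limi{i}{w_i}$ is a strongly Fergusonian representation word with representation sequence $\seq{b_i}$; since $p_{i,i-1}=m\geq2$ for $i>n$, Lemma~\ref{lem:aperdoub} moreover shows $v$ is aperiodic.

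Fact (2) is the crux, and essentially all the difficulty of the conjecture lives here. Adding an element $c\in\bk{k-1}$ to a zend produces a representation ending in $c\neq0$, so the $\bk{k-1}$-part of $T_b$ causes no overlap, and it remains to show that $x\in N_b$ implies $x+b_i-1\notin N_b$ for every $i$. In the case proved in Theorem~\ref{thm:main} this was achieved by identifying $N_b$ — apart from the finitely many terms dictated by the head $\seq{a_i}_{i=0}^n$ — with the Beatty sequence of $\phi^2$, via an analogue of Proposition~\ref{prop:oddfibwyth}, and $T_b$ with a subset of the complementary Beatty sequence, after which Proposition~\ref{prop:beattyforbdiff} forbids $b_i-1$ from being a difference of two elements of $N_b$. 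For general $m$ the role of $\phi^2$ is taken by $\beta=\frac{(m+1)+\sqrt{m^2-2m+5}}{2}$, the dominant root of $x^2-(m+1)x+(m-1)=0$: one expects $N_b$ to be eventually the Beatty sequence generated by $\beta$ and $T_b$ to lie in the complementary Beatty sequence, so that Proposition~\ref{prop:beattyforbdiff} again applies. The main obstacle is to make this rigorous — to prove the analogue of Proposition~\ref{prop:oddfibwyth} describing the non-volatile zends of the $\seq{b_i}$-representation in terms of the $\beta$-partition, and to control how this eventually-periodic structure meshes with the arbitrary head. An alternative that sidesteps the Beatty-sequence number theory would be a direct structural induction on indices, in the spirit of the appendix proof of Theorem~\ref{thm:fergtracegen}, showing by hand that no two non-volatile zends of the $\seq{b_i}$-representation differ by any $b_i-1$; either way, this disjointness statement is the heart of the problem.
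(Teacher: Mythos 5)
This statement is a conjecture; the paper offers no proof of it, and your proposal is not one either --- it is a reduction that, by your own admission, leaves the entire content of the conjecture unproven. Step (2), the disjointness $\p{N_b+T_b}\cap N_b=\emptyset$, is exactly where the conjecture lives, and your primary suggestion for attacking it is a route the paper explicitly closes off: in the discussion immediately following Conjecture~\ref{conj:ext2}, the author observes that already for $m=3$ (the sequence $\p{1,2,5,13,42}$) the underlying binary morphism is $0\mapsto0001$, $1\mapsto01$, which is \emph{not} Sturmian, so the positions of the zeroes in the representation word are not described by a Beatty sequence and ``that machinery will be of little use in solving the problem even in this specific case.'' So the analogue of Proposition~\ref{prop:oddfibwyth} with $\phi^2$ replaced by the dominant root $\beta$ of $x^2-\p{m+1}x+\p{m-1}$ is not merely unestablished; the mechanism by which it was proved for $m=2$ (Zeckendorf representations and Proposition~\ref{prop:beattyforbdiff}) is unavailable. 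Your fallback of a ``direct structural induction'' is not an argument, only a name for one.

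Step (1) is also thinner than you present it. The head $\seq{a_i}_{i=0}^n$ is an arbitrary finite representing sequence subject only to the Nim-sequence hypothesis, so to invoke Theorem~\ref{thm:fergtracegen} you need both the unproven converse lemma (that such a word admits the block decomposition $w_i=\p{\prod_{j}w_{i-j}^{p_{i,i-j}}}k$) and, more delicately, that appending $p_{i,i-1}=m$, $p_{i,\ell}=1$ for $i>n$ respects the monotonicity requirement $p_{i,\ell}\geq p_{i+1,\ell}$; if the head happens to have $p_{n,\ell}=0$ for some $\ell<n-1$, your proposed extension violates it outright, and it is not shown that the single splice condition $a_n=\p{m+1}a_{n-1}-\p{m-1}a_{n-2}$ rules this out. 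In short: the framing via Theorem~\ref{thm:absmain} is the natural one and matches how the paper treats the solved case $m=2$, but nothing in the proposal advances the conjecture past where the paper already leaves it.
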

In other words, any sequence that can be extended once by such a recurrence can be extended infinitely by the same recurrence.

A specific instance of Conjecture~\ref{conj:ext2} comes from the sequence $\p{1,2,5,13,42}$ with $m=3$.  This construction can be described by letting $w_{-1}=01$, $w_0=01012$, $w_1=0101201012012$, and for all $i\geq2$, let
\[
w_i=w_{i-1}^3\p{\prod_{j=1}^{i-2}w_{i-1-j}}012.
\]
Let
\[
w=\limi{i}{w_i}.
\]
In a similar vein to Proposition~\ref{prop:hd0l}, this construction can be construed as a ternary morphism applied to the fixed point of a binary morphism.  In this case, the binary morphism maps $0$ to $0001$, and it maps $1$ to $01$.  This is not a Sturmian morphism.  Hence, the positions of the zeroes in the representation word will not be described by a Beatty sequence, so that machinery will be of little use in solving the problem even in this specific case.

Another observation is that Proposition~\ref{prop:oddfibwyth} depends very heavily on the sequences begin Fibonacci numbers and odd-index Fibonacci numbers.  It seems natural that there should be some relationship between representations by a sequence and representation by a subsequence of that sequence, especially when the subsequence is defined by a simple pattern (such as odd indices).  Any sort of generalization to Proposition~\ref{prop:oddfibwyth} would be a great step toward perhaps solving the general $p_{i,\ell}$ problem.

Other open problems involve removing or weakening extra conditions we needed on our theorems.  For example, Theorem~\ref{thm:absmain} has the strange condition for all $a<a_i$ with $a\in N$, $a+a_i-S_i\subseteq L$ that we needed to prove that we can extract Nim sequences of finite subtraction sets, given that the infinite case works.  Proposition~\ref{prop:2aain} gives us a method to verify this condition that applies in most cases we would consider.  In the one other case (Proposition~\ref{prop:nodoub}), we were able to verify the condition directly.  So, we have the following two questions:
\begin{itemize}
\item Is this condition necessary to allow the infinite case of Theorem~\ref{thm:absmain} to be extended to the finite case?
\item Is this condition automatically satisfied by any parameters satisfying the infinite case of Theorem~\ref{thm:absmain}?
\end{itemize}
The answer to the first question may very well be yes, but we do not have any example that proves this.  The most likely form of an example would be that the finite Nim sequences have nontrivial prefixes.

Another question related to Theorem~\ref{thm:absmain} is when we can take $I=V$.  In Theorem~\ref{thm:main}, we were able to take $I=V$, but the sequence $\seq{1,2,5,8,16,27,57,87}$ does not allow this.  It is open to give a characterization of when $I=V$ works and when it does not.

It is also applicable to ask how much of the work of this paper can be automated.  In recent years, computers have made quick work of numerous theorems whose original human proof were quite difficult.  Much of the work of this paper feels somewhat mechanical, but, in the end, the fact that Proposition~\ref{prop:oddfibwyth} is specific to Zeckendorf representations was critical to the main result.

One last observation is that the sequences $W_L$ and $W_U$ are used to describe the $P$-positions in Wythoff's Game.  The full Sprague-Grundy function for this game is not known; perhaps the work of this paper inadvertantly serves as progress toward that long-standing problem.
\section{Acknowledgements}
I would like to thank Dr. Doron Zeilberger of Rutgers University for providing me with useful feedback on this work and on the paper and for teaching the course that spawned the idea for this research.  I would also like to thank Nathaniel Shar of Rutgers University for proofreading a draft of this paper and providing me with useful feedback.
%
%
%
%
%
\appendix
\section{Routine Proofs about Nim Sequences}\label{app:nsrp}
To prove Proposition~\ref{prop:sinf}, we will use the following lemma.
\begin{lemma}\label{lem:maxsg}
Let $S$ be a subtraction set.  All of the Sprague-Grundy values for $S$ are at most $\ab{S}$.
\end{lemma}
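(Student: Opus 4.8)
The plan is to reduce immediately to the case where $S$ is finite, since if $S$ is infinite then $\ab{S}=\infty$ and the claimed bound is vacuous. So suppose $\ab{S}=k$ for some nonnegative integer $k$. The key observation is entirely local to a single position: from any position $n$, the set of positions reachable in one move is $\st{n-s\mid s\in S,\;s\leq n}$, and this set has at most $k$ elements --- one for each element of $S$, and strictly fewer if some elements of $S$ exceed $n$. In particular, the number of legal moves out of $n$ is at most $k$, uniformly in $n$, so no induction on $n$ is needed.

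Now I would invoke the elementary fact that the $\mex$ of any set of at most $k$ nonnegative integers is at most $k$: the set $\st{0,1,\ldots,k}$ has $k+1$ elements, so a set of size at most $k$ must omit at least one of them, forcing its minimal excluded value to lie in $\st{0,1,\ldots,k}$. Applying this to the set of Sprague-Grundy values of the (at most $k$) positions reachable from $n$ gives $\SG_S\p{n}\leq k=\ab{S}$ for every $n$, as required. There is no real obstacle here: this is a one-step pigeonhole argument, and the only points that merit an explicit sentence are the reduction handling the infinite case and the short justification of the $\mex$ bound.
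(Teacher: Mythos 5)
Your proof is correct, and it is exactly the routine pigeonhole argument the paper has in mind (the paper explicitly omits the proof of this lemma as easy): from any position at most $\ab{S}$ positions are reachable, and the $\mex$ of a set of at most $k$ nonnegative integers is at most $k$. The reduction to finite $S$ and the explicit justification of the $\mex$ bound are both sound.
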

The proof of Lemma~\ref{lem:maxsg} is easy, so we omit it.
We can now prove Proposition~\ref{prop:sinf}.
\begin{proof}
We will prove the contrapositive.  Let $S$ be a finite set of natural numbers.  We will show that the Nim sequence for $S$ is eventually periodic.  Let $s$ denote the maximum value in $S$, and let $k$ denote the cardinality of $S$.  Construct the directed graph $G$ whose vertices are $s$-tuples of integers in $\st{0,1,\ldots, k}$ and where each vertex has exactly one outgoing edge defined by the following rule:
\[
\p{a_1,a_2,\ldots,a_s}\to\p{a_2,a_3,\ldots,a_s,\begin{array}{c}\mex\vspace{-0.1 in}\\\hspace{0in}_{t\in S}\end{array}a_{s+1-t}}.
\]
By Lemma~\ref{lem:maxsg}, every block of $s$ consecutive terms in the Nim sequence form a vertex in $G$.  Also, by our construction of edges, computing terms after the $s^{th}$ in the Nim sequence for $S$ correspond to walking along edges in $G$.  By the Pigeonhole Principle and the structure of $G$, any such walk must eventually end up in a cycle.  Such a cycle corresponds to an eventual period in the Nim sequence, as required.
\end{proof}

We now give the proof of Proposition~\ref{prop:gcd}.
\begin{proof}
Let $n=qg+r$ for integers $q$ and $0\leq r<g$.  If $T_q$ is the set of positions that can be legally reached in one move from position $q$ in the game with subtraction set $S$, then $gT_q+r$ is the set of positions that can be legally reached in one move from position $n$ in the game with subtraction set $gS$.  In particular, the remainder $r$ is fixed throughout the game.  Hence, each sequence with jumps of size $g$ is the same as the Nim sequence for $S$.  This suffices to prove the desired statement, as there are $g$ such sequences interleaved in the Nim sequence of $gS$.
\end{proof}

Finally, we give the proof of Corollary~\ref{cor:ternary}.
\begin{proof}
Let $\st{a_i}_{i\geq0}$ be a binary sequence that is the Nim sequence of some subtraction game.  Let $s=\min S$.  We will show by induction that $a_i=\fl{\frac{i}{s}}\p{\modd 2}$.  First, we see that $a_i=0$ for all $i<s$ since there are no legal moves, as required.  Now, assume that $a_{i-s}=\fl{\frac{i-s}{s}}\p{\modd 2}$.  We will show that $a_i\neq a_{i-s}$, which is our desired conclusion.  If $a_{i-s}=0$, then $a_i=1$ by Ferguson's Theorem, as required.  If $a_{i-s}=1$, then $a_i\neq1$ by Ferguson's Theorem.  Since $a_i=0$ or $a_i=1$, then $a_i=0$, as required.  Hence, we have shown that the sequence $\st{a_i}_{i\geq0}$ is periodic with period $0^s1^s$.
\end{proof}
\section{Algorithm for Finding Period and Prefix}\label{app:alg}
In this appendix, we give our algorithm for finding the period and prefix of a finite subtraction set.
\begin{algorithm}
\caption{$GET\_PERIOD\_AND\_PREFIX\p{S}$}
\label{alg:gpp}
\begin{algorithmic}
\REQUIRE $S$ finite subtraction set
\STATE $n\gets 50$ \COMMENT{the number $50$ here is arbitrary}
\WHILE{\TRUE}
	\STATE Let $w$ be the first $n$ terms of the Nim sequence of $S$
	\FOR{$i$ \textbf{from}  $n-1$ \textbf{down to} $2\cdot\max\p{S}$}
		\STATE $v\gets w\left[i..n\right)$
		\STATE $x\gets v^{\ceil{\frac{2\cdot\max S}{\ab{v}}}+1}$
		\IF{$w=ux$ for some $u$}
			\STATE $ok\gets\TRUE$
			\FOR{$j$ \textbf{from} $\max\p{S}$ \textbf{to} $\ab{x}$}
				\IF{$x\bk{j}\neq\begin{array}{c}\mex\vspace{-0.1 in}\\\hspace{0in}_{s\in S}\end{array}x\bk{j-s}$}
					\STATE\COMMENT{$x\neq\per\p{S}$}
					\STATE $ok\gets\FALSE$
					\STATE \textbf{break}
				\ENDIF
			\ENDFOR
			\IF{ok}
				\STATE\COMMENT{some cyclic shift of $v$ equals $\per\p{S}$}
				\WHILE{$\ab{u}>0$ \AND $u\bk{\ab{u}-1}=v\bk{\ab{v}-1}$}
					\STATE\COMMENT{$u$ not yet minimal}
					\STATE $v\gets v\bk{\ab{v}-1}v\left[0..\ab{v}-1\right)$
					\STATE $u\gets u\left[0..\ab{u}-1\right)$
				\ENDWHILE
				\RETURN $\pref\p{S}=u$ \AND $\per\p{S}=v$
			\ENDIF
		\ENDIF
	\ENDFOR
	\STATE $n\gets2n$
\ENDWHILE
\end{algorithmic}
\end{algorithm}
\FloatBarrier
We will now prove that Algorithm~\ref{alg:gpp} is correct.  This will follow from proving the following claims:
\begin{itemize}
\item When $ok$ is true after the inner \texttt{for} loop terminates, $v$ is an eventual period of the Nim sequence of $S$ of minimal length.
\item The final $u$ and $v$ have minimal length among all decompositions $w=uv^\omega$, where $w$ is the Nim sequences of $S$.
\item This algorithm terminates.
\end{itemize}
We will prove each of these items in turn.
\begin{description}
\item[$v$ minimal length period after inner \texttt{for} loop:] Note that all eventual periods of the Nim sequence of $S$ are powers of some minimal period.  Since we test periods in increasing order of length, it will suffice to show that $v$ is an eventual period of the Nim sequence of $S$.  This follows from the fact that $w$ ends in sufficiently many copies of $v$ that future terms of the Nim sequence can be determined only from terms in these copies, and these copies agree with $v$ being an eventual period.
\item[$u$ and $v$ minimal with respect to decomposition $uv^\omega$:] We know that $v$ has minimal length, as no operation after the inner \texttt{for} loop changes the length of $v$.  It remains an eventual period, as any cyclic permutation of an eventual period is an eventual period.  Also, $u$ has minimal length among all potential prefixes by the stopping condition on that loop.
\item[Termination:] The only loop that appears that it may not terminate is the outermost \texttt{while} loop.  But, if an eventual period exists, this algorithm will find it and this loop will terminate.  Since $S$ is finite, the Nim sequence has an eventual period by Proposition~\ref{prop:sinf}.  Therefore, Algorithm~\ref{alg:gpp} terminates, as required.
\end{description}
\section{Proof of Theorem~\ref{thm:fergtracegen}}\label{app:fergtracegen}.
\begin{proof}
We will show that, for all $n$, if $\ab{w_i}\leq n<\ab{w_{i+1}}$
\[
w\bk{n}=\begin{cases}
n & n<k\\
k & n=\ab{w_{i+1}}-1\\
w\bk{n-\ab{w_i}} & \text{otherwise}
\end{cases}
\]
This will show, by Theorem~\ref{thm:repwdtrace}, that $w$ is a representation word and that its representation sequence is $\seq{1,k=\ab{w_1},\ab{w_2},\ab{w_3},\ldots}$.

The first case is clear from the definition of $w_1$, and the second case follows from our construction where we place a $k$ at the end of each $w_i$ for $i\geq2$.  So, it will suffice to prove the last case.  If $\ab{w_i}\leq n<\p{p_{i,i-1}-1}\ab{w_{i-1}}+\ab{w_i}$, then we are in some copy of $w_{i-1}$ at the beginning of $w_i$ other than the first copy.  Going back $\ab{w_i}$ positions in $w_i$ will put us in the same position in the previous copy of $w_{i-1}$, so the proposition holds for $\ab{w_i}\leq n<\p{p_{i,i-1}-1}\ab{w_{i-1}}+\ab{w_i}$.
Now, it will suffice to show that the subword consisting of the remaining values of $n$ (not including $\ab{w_{i+1}}-1$), which equals
\[
\prod_{j=2}^{i-1}w_{i-j}^{p_{i,i-j}}
\]
for a given $i$, is a prefix of $w_{i-1}$.
We will actually prove a stronger statement: For all $\ell$ with $1\leq\ell\leq i-1$,
\[
\prod_{j=\ell+1}^{i-1}w_{i-j}^{p_{i,i-j}}
\]
is a prefix of $w_{i-\ell}$.  We will prove this statement by induction on $\ell$.  If $\ell=i-1$, then the product is empty, so the statement is true.  Now, fix $\ell$ and assume the statement holds for all larger values of $\ell$.  We have
\begin{align*}
w_{i-\ell}&=\p{\prod_{j=1}^{i-\ell-1}w_{i-\ell-j}^{p_{i-\ell, i-j-\ell}}}k\\
&=\p{\prod_{j=\ell+1}^{i-1}w_{i-j}^{p_{i-\ell, i-j}}}k.
\end{align*}
If $p_{i-\ell, i-j}=p_{i,i-j}$ for all $1\leq j<i-\ell$, then we have that
\begin{align*}
w_{i-\ell}&=\p{\prod_{j=\ell+1}^{i-1}w_{i-j}^{p_{i-\ell, i-j}}}k\\
&=\p{\prod_{j=\ell+1}^{i-1}w_{i-j}^{p_{i, i-j}}}k,
\end{align*}
which has the desired prefix.  Now, assume that, for some $j$, $p_{i-\ell, i-j}\neq p_{i,i-j}$.  This means that $p_{i-\ell, i-j}>p_{i,i-j}$.  Let $\ell_0$ denote the smallest $j$ where this happens.  By our inductive hypothesis, $w_{i-\ell_0}$ has
\[
\prod_{j=\ell_0+1}^{i-1}w_{i-j}^{p_{i,i-j}}
\]
as a prefix, so we can write
\[
w_{i-\ell_0}=\p{\prod_{j=\ell_0+1}^{i-1}w_{i-j}^{p_{i,i-j}}}v
\]
for some word $v$.  So, we have
\begin{align*}
w_{i-\ell}&=\p{\prod_{j=\ell+1}^{i-1}w_{i-j}^{p_{i-\ell, i-j}}}k\\
&=\p{\prod_{j=\ell+1}^{\ell_0-1}w_{i-j}^{p_{i-\ell, i-j}}}w_{i-\ell_0}^{p_{i-\ell, i-\ell_0}}\p{\prod_{j=\ell_0+1}^{i-1}w_{i-j}^{p_{i-\ell, i-j}}}k\\
&=\p{\prod_{j=\ell+1}^{\ell_0-1}w_{i-j}^{p_{i, i-j}}}w_{i-\ell_0}^{p_{i, i-\ell_0}}w_{i-\ell_0}w_{i-\ell_0}^{p_{i-\ell, i-\ell_0}-p_{i,i-\ell_0}-1}\p{\prod_{j=\ell_0+1}^{i-1}w_{i-j}^{p_{i-\ell, i-j}}}k\\
&=\p{\prod_{j=\ell+1}^{\ell_0-1}w_{i-j}^{p_{i, i-j}}}w_{i-\ell_0}^{p_{i, i-\ell_0}}\p{\prod_{j=\ell_0+1}^{i-1}w_{i-j}^{p_{i,i-j}}}\cdot vw_{i-\ell_0}^{p_{i-\ell, i-\ell_0}-p_{i,i-\ell_0}-1}\p{\prod_{j=\ell_0+1}^{i-1}w_{i-j}^{p_{i-\ell, i-j}}}k\\
&=\p{\prod_{j=\ell+1}^{i-1}w_{i-j}^{p_{i, i-j}}}vw_{i-\ell_0}^{p_{i-\ell, i-\ell_0}-p_{i,i-\ell_0}-1}\p{\prod_{j=\ell_0+1}^{i-1}w_{i-j}^{p_{i-\ell, i-j}}}k,
\end{align*}
which has the desired prefix.

Now, all that remains is to show that $w$ is strongly Fergusonian.  By Proposition~\ref{prop:traceferg}, it will suffice to show that every $k$ in $w$ follows a $k-1$.  We will prove this by induction on $i$.  There are no $k$'s in $w_1$, so the claim is true for $w_1$.  Now, assume that the claim holds for all $w_j$ with $j<i$.  We have
\[
w_i=\p{\prod_{j=1}^{i-1}w_{i-j}^{p_{i,i-j}}}k.
\]
All $k$'s, other than the last one, follow $\p{k-1}$'s by the inductive hypothesis.  The last $k$ also follows a $k-1$, since $p_{i,1}>0$ and $w_1$ ends in $k-1$.  Therefore, every $k$ in $w_i$ follows a $k-1$, as required.
\end{proof}
\section{Proof of Proposition~\ref{prop:oddfibwyth}}\label{app:oddfibwyth}
\begin{proof}
By Lemma~\ref{lem:oddfibvol}, it will suffice to show that there exists a value-preserving bijection between Zeckendorf representations ending in an odd number of zeroes (elements of $W_U$ by Proposition~\ref{prop:zeckwyt}) and $\seq{a_i}$-representations ending in $0$ and not ending in a two-block.  We will give one direction of the bijection via an algorithm; the other direction will follow from a counting argument.
\begin{algorithm}
\caption{$ODD\_INDEX\_FIB\_TO\_ZECK\p{\seq{d_i}}$}
\label{alg:ofzeck}
\begin{algorithmic}
\REQUIRE $\seq{d_i}$ is the $\seq{a_i}$-reprsentation of some nonnegative integer $n$, given as an infinite ternary sequence (with only finitely many nonzero terms)
\REQUIRE $d_0=0$
\REQUIRE $\seq{d_i}$ does not end in a two-block (when treated as a digital representation)
\STATE Let $\seq{e_i}$ be the ternary sequence satisfying
\begin{itemize}
\item$e_0=d_0$
\item$e_{2i-1}=d_i$ for all $i\geq1$
\item$e_{2i}=0$ for all $i\geq1$.
\end{itemize}
\COMMENT{$\seq{e_i}$ is the result of inserting zeroes between every pair of digits in $\seq{d_i}$ other than the least significant pair}
\INVARIANT $\sum\limits_{i=0}^\infty e_iF\p{i+2}=n$
\INVARIANT $\seq{e_i}$ ends with an odd number of zeroes (when treated as a digital representation)
\STATE Let $j$ be the index of the most significant nonzero digit in $\seq{e_i}$.
\FOR {$i$ \textbf{from} $j$ \textbf{down} \TO $2$}
   \IF {$e_i=2$}
	\STATE $e_i\gets0$
	\STATE $e_{i+1}\gets e_{i+1}+1$
	\STATE $e_{i-2}\gets e_{i-2}+1$
   \ENDIF
\ENDFOR
\INVARIANT $\seq{e_i}$ contains only zeroes and ones
\WHILE {$\seq{e_i}$ contains a pair of consecutive ones}
\STATE Let $j$ be the index of the most significant nonzero digit in $\seq{e_i}$. representation)
\FOR {$i$ \textbf{from} $j$ \textbf{down} \TO $1$}
   \IF {$e_i=1$ \AND $e_{i-1}=1$}
	\STATE $e_i\gets0$
	\STATE $e_{i-1}\gets 0$
	\STATE $e_{i+1}\gets e_{i+1}+1$
   \ENDIF
\ENDFOR
\ENDWHILE
\ENSURE $\seq{e_i}$ is a valid Zeckendorf representation
\RETURN $\seq{e_i}$
\end{algorithmic}
\end{algorithm}

Algorithm~\ref{alg:ofzeck} is an adaptation of Peter Fenwick's algorithm for adding numbers using their Zeckendorf representations~\cite{fenwick}.  In that vein, we will refer to the operation performed by the first \texttt{for} loop as a \emph{two-removal}, and we refer to the operation performed by the second \texttt{for} loop as an \emph{one-removal}.  We claim that Algorithm~\ref{alg:ofzeck} converts an $\seq{a_i}$-representation ending in $0$ but not a two-block to a Zeckendorf representation for the same number, and the Zeckendorf representation will end in an odd number of zeroes.  In the algorithm, we have designated three invariants and one postcondition.  When we declare something to be an invariant, we mean that it is a loop invariant of all future loops in the algorithm (included nested loops).  To prove the correctness of this algorithm, we must ensure that no illegal operations are performed, we must check the three invariants and the postcondition in the algorithm, and we must verify that this algorithm terminates.  Note that any of these statements that is an invariant of the second \texttt{for} loop is automatically an invariant of the \texttt{while} loop.  Hence, we will only check the invariants for the \texttt{for} loops.

Before we do check these facts, we will prove a useful property of the first \texttt{for} loop, which we will call the \emph{even zero property}. We claim that for all $m\geq0$, whenever $i=2m$ in that loop, $e_i=0$.  The proof will be by induction on $j-2m$.  Notice that $j$ must be odd.  If $j-2m=1$, the claim is true, since this loop can never modify $e_{j-1}$.  Now, assume that for all $m'>m$, whenever $i=2m'$ in that loop, $e_i=0$.  Assume for a contradiction that $e_i\neq0$ when $i=2m$.  This means that $e_i$ must have been modified by a prior iteration of the loop.  The only iteration that could have made such a modification was the iteration where $i=2m+2=2\p{m+1}$.  This implies that $e_{2\p{m+1}}=2$ in the iteration where $i=2\p{m+1}$, a contradiction.

We will now check all of the desired properties.
\begin{description}
\item[No Illegal Operations:] The only illegal operation that could conceivably take place during this algorithm's execution would be assigning to $e_i$ for some $i<0$.  But, we indexed our \texttt{for} loops so that this would not happen.
\item[Invariant 1: $\sum\limits_{i=0}^\infty e_iF\p{i+2}=n$:] We must show that this is an invariant of both \texttt{for} loops.
\begin{description}
\item[First \texttt{for} loop:] To check that this invariant holds at initialization of this loop, it will suffice to check that
\[
\sum_{i=0}^\infty e_iF\p{i+2}=n
\]
when we defined $\seq{e_i}$.  Fix $\seq{e_i}$ to be its initial value.  We have that
\begin{align*}
n&=\sum_{i=0}^\infty d_ia_i\\
&=\sum_{i=0}^\infty d_iF\p{2i+1}\\
&=e_0F\p{1}+\sum_{i=1}^\infty e_{2i-1}F\p{2i+1}\\
&=e_0F\p{2}+\sum_{i=1}^\infty e_{2i-1}F\p{2i+1}+\sum_{i=1}^\infty e_{2i}F\p{2i+2}\\
&=\sum_{i=0}^\infty e_iF\p{i+2},
\end{align*}
as required.

Now, we must check that the invariant holds after every iteration of the loop.  This loop performs two-removals, which preserve the invariant as a consequence of recurrence~\ref{rec:0} of Lemma~\ref{lem:fibrec}.  Therefore, this invariant holds for the first \texttt{for} loop.
\item[Second \texttt{for} loop:] Since the invariant holds at the end of the final iteration of the first \texttt{for} loop, it must hold at the initialization of the second \texttt{for} loop.  Now, we must check that the invariant holds after every iteration of the loop.  This loop performs one-removals, which preserve the invariant as a consequence of the fundamental Fibonacci recurrence ($F\p{n}=F\p{n-1}+F\p{n-2}$).  Therefore, this invariant holds for the second \texttt{for} loop.
\end{description}
\item[Invariant 2: $\seq{e_i}$ ends with an odd number of zeroes:] We must show that this is an invariant of both \texttt{for} loops.
\begin{description}
\item[First \texttt{for} loop:] Let $i$ be the smallest positive integer such that $d_i>0$.  By the preconditions that $d_0=0$, we have $i>0$.  Hence, $e_{2i+1}$ is the nonzero term with the smallest index in that sequence (in the original sequence $\seq{e_i}$).  In particular, $\seq{e_i}$ ends with an odd number ($2i+1$) of zeroes (when treated as a digital representation).  Therefore, the invariant holds at the initialization of the first \texttt{for} loop.

Now, we must check that the invariant holds after every iteration of the loop.
Any iteration where the loop makes no assignments clearly preserves the invariant, so assume with loss of generality that we are removing a two in position $i$.  In particular, $e_i\neq0$.  We must have $e_{i-1}=0$ by the even zero property.  So, if all positions after $i$ are zero, then we are reducing the number of zeroes at the end of the representation by $2$, thereby preserving the parity.  If some position after $i$ is nonzero, then we do not change the number of zeroes at the end.  In either case, we preserve the parity of the number of trailing zeroes in the representation, as required.
\item[Second \texttt{for} loop:] Since the invariant holds at the end of the final iteration of the first \texttt{for} loop, it must hold at the initialization of the second \texttt{for} loop.  Now, we must check that the invariant holds after every iteration of the loop.  This loop performs one-removals.  If the ones being removed are the final nonzero digits in the representation, then the number of zeroes increases by $2$, thereby preserving the parity.  Otherwise, the number of final nonzero digits remains unchanged.  In either case, we preserve the parity of the number of trailing zeroes in the representation, as required.
\end{description}
\item[Invariant 3: $\seq{e_i}$ contains only zeroes and ones:] We must show that this is an invariant of the second \texttt{for} loop.  To show that it holds at initialization, we must show that it holds after the last iteration of the first \texttt{for} loop.  Notice that the even zero property implies that, after the iteration of that loop when $i=m$, we have $e_\ell\in\st{0,1}$ for all $\ell\geq m$.  The last iteration is when $m=2$.  So, it will suffice to check that $e_0\neq2$ and $e_1\neq2$ at the end of the final iteration of the first \texttt{for} loop.  Notice that $e_0=0\neq2$ by Invariant~2.  Hence, we must check that $e_1\neq2$.  Assume for a contradiction that $e_1=2$.  This was not originally the case, as we assumed that $\seq{d_i}$ did not end in a two-block.  So, some step modified this position.  In particular, some iteration of the first \texttt{for} loop made changes to the sequence, which means that there was at least one $2$ in the original representation.  Since $\seq{d_i}$ did not end in a two-block, there is some $m>0$ such that $d_m=0$.  Let $m$ be the minimal such index.  Notice that $e_{2m-1}=0$ initially.

We will now show that for all $m'\geq1$, $e_{2m'-1}=2$ at the start of the iteration where $i=2m'-1$.  This will contradict the fact that $e_{2m-1}$ could only have been incremented at most once prior to the iteration where $i=2m-1$.  The proof will be by induction on $m'$.  We know that $e_1=2$ in iteration $i=1$ by assumption.  Now, assume that $e_{2m'-3}=2$ at the start of iteration $i=2m'-3$.  Since $\seq{d_i}$ did not end in a two-block, $e_{2m'-3}$ was not initially a $2$.  Hence, it must have been incremented, which means that $e_{2m'-1}=2$ at the start of iteration $i=2m'-1$, as required.
\item[Postcondition: $\seq{e_i}$ is a valid Zeckendorf representation:] Invariant~3 implies that $\seq{e_i}$ contains only zeroes and ones, and the fact that we exited the \texttt{while} loop implies that there are no consecutive ones.  Hence, $\seq{e_i}$ is a valid Zeckendorf representation.
\item[Termination:] The only part of this algorithm that has any chance not to terminate is the \texttt{while} loop.  We see that at the end of every iteration of the \texttt{while} loop, the sequence is lexicographically later than it was at the start.  This, along with Invariant~1 (which gives an upper bound on how high the first $1$ can appear) and Invariant~3 (which restricts the digits that can appear), implies that this loop terminates.
\end{description}

We have now seen how to transform an $\seq{a_i}$-representation ending in $0$ but nonz a two-block into a Zeckendorf representation ending in an odd number of zeroes.  We will now show that the number of $\seq{a_i}$-representations of length at most $\ell$ ending in zero but not a two-block is the same as the number of Zeckendorf representations of length at most $2\ell-1$ ending in an odd number of zeroes.  Notice that this is equivalent to showing that the number of $\seq{a_i}$-representations of length at most $\ell-1$ equals the number of Zeckendorf representations of length at most $2\ell-2$  ending in an even number of zeroes, since in both cases we can chop off the final zero.  Recall that the number of $\seq{a_i}$-representations of length at most $\ell-1$ is $F\p{2\ell-1}$.  Let $d\p{n}$ equal the number of Zeckendorf representations of length at most $2n$ ending in an even number of zeroes.  We have the recurrence
\[
d\p{n}=F\p{2n}+d\p{n-1}.
\]
The first term comes from the strings that end in $1$.  They must end in $01$, and the string preceding the $01$ can be any of the $F\p{2n}$ Zeckendorf representations of length $2n-2$.  The second term comes from the strings that end in $0$.  They must end in $00$, since these strings end in an even number of zeroes.  The string preceding the $00$ can be any of the $d\p{n-1}$ Zeckendorf representations of length $2n-2$ ending in an even number of zeroes.  We claim that $d\p{n}=F\p{2n+1}$.
If $n=1$, there are $2=F\p{3}$ Zeckendorf representations of length at most $2$  ending in an even number of zeroes ($0$ and $1$).  Hence, we have the fundamental Fibonacci recurrence with the same initial conditions.  So, $d\p{n}=F\p{2n+1}$.  
This implies that $d\p{2\ell-2}=F\p{2\ell-1}$, so the number of $\seq{a_i}$-representations of length at most $\ell-1$ equals the number of Zeckendorf representations of length at most $2\ell-2$  ending in an even number of zeroes, as required.
\end{proof}
\section{An Extension of Theorem~\ref{thm:main} to Larger Alphabets}\label{app:promo}
In this appendix, we will show how to build related representation words on larger alphabets from representation words with smaller alphabets.  If the original word was the Nim sequence of a subtraction game, the new word our method creates will also be.
To begin, we will introduce a method of deriving one representing sequence from another, where the new sequence has similar combinatorial properties to the old sequence. 
This will be nearly analogous to moving from binary representations to ternary representations.
\begin{defin}\label{def:promo}
Let $\seq{a_i}$ be a representing sequence, and let $j$ be a positive integer.
The \emph{$j$-promotion} of $\seq{a_i}$ is the sequence $\seq{b_i}_{i\geq0}$ defined recursively by
\[
b_i=
\begin{cases}
a_i & i<j\\
1+\sum\limits_{\ell=0}^{j-2}d_\ell b_\ell+\p{d_{j-1}+1}b_{j-1}+\sum\limits_{\ell=j}^{i-1}d_\ell b_\ell & d_{j-1}>0\\
1+\sum\limits_{\ell=0}^{i-1}d_\ell b_\ell & \text{otherwise},
\end{cases}
\]
where $d_{i-1}d_{i-2}\cdots d_0$ is the $\seq{a_h}$-representation of $a_i-1$.
\end{defin}
For example, the $1$-promotion of the powers of $2$ is the sequence starting with $1$ and then containing $3$ times the powers of $2$.  The powers of $3$ (corresponding to ternary representations) are obtained from the powers of $2$ (corresponding to binary representations) if each digit is promoted once.  Also, the $1$-promotion of the Fibonacci numbers (starting with $1$, $2$) is the Lucas numbers  (starting with $1$, $3$).  We will mainly be concerned with $1$-promotions going forward.

We have another definition related to promotions.
\begin{defin}\label{def:promofun}
Let $\seq{a_i}$ be a representing sequence, and let $j$ be a positive integer.  Let $\seq{b_i}_{i\geq0}$ be the $j$-promotion of $\seq{a_i}$.  The \emph{$j$-promotion function} of $\seq{a_i}$ is the function $f$ on the nonnegative integers satisfying
\[
f\p{n}=
\begin{cases}
n & n < a_{j-1}\\
\sum\limits_{\ell=0}^{j-2}d_\ell b_\ell+\p{d_{j-1}+1}b_{j-1}+\sum\limits_{\ell=j}^{i}d_\ell b_\ell & d_{j-1}>0\\
\sum\limits_{\ell=0}^{i}d_\ell b_\ell & \text{otherwise},
\end{cases}
\]
where $d_{i}d_{i-1}\cdots d_0$ is the $\seq{a_h}$-representation of $n$.  If $m=f\p{n}$, we say that $m$ is the \emph{$j$-promotion} of $n$.
\end{defin}
Under this definition, $f\p{a_i}=b_i$ for all $i$.

We will now use $1$-promotions to expand the alphabet of a representation word to yield another representation word.  We have the following definition.
\begin{defin}\label{def:promorph}
Let $k$ be an integer greater than $1$.  We define a morphism $\varphi_k:\Sigma_k\to\Sigma_{k+1}$ by
\[
\varphi_k\p{i}=\begin{cases}
01 & i=0\\
i+1 & i > 0.
\end{cases}
\]
\end{defin}
The result is as follows.
\begin{theorem}\label{thm:tracepromo}
Let $\seq{a_i}$ be a representing sequence with $a_1=k$, and let $\seq{b_i}$ be the $1$-promotion of $\seq{a_i}$.  Let $w$ be the representation word of $\seq{a_i}$, and let $v$ be the representation word of $\seq{b_i}$.  Then, $\varphi_k\p{w}=v$.
\end{theorem}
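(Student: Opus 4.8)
<br>

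The goal is to show that $\varphi_k(w) = v$, where $w$ is the representation word of $\langle a_i\rangle$ and $v$ is the representation word of the $1$-promotion $\langle b_i\rangle$. The natural strategy is to use the recursive characterization of representation words given by Theorem~\ref{thm:repwdtrace}: a word is the representation word of $\langle b_i\rangle$ if and only if it is infinite and satisfies the piecewise-recursive identity at every position $n$, namely $v[n] = n$ for $n < b_1$, $v[n] = b_1$ when $n = b_{j+1}-1$ with $j \geq 1$, and $v[n] = v[n - b_j]$ otherwise (with $j = \operatorname{ind}(n)$ relative to $\langle b_i\rangle$). So I would let $v' = \varphi_k(w)$ and verify that $v'$ satisfies this characterization for $\langle b_i\rangle$; uniqueness then forces $v' = v$.

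The heart of the argument is to understand how $\varphi_k$ interacts with the recursive structure of $w$. The key combinatorial fact to establish is a correspondence between positions in $w$ and positions in $v'$: since $\varphi_k$ sends $0 \mapsto 01$ and every positive symbol to a single symbol, the map $n \mapsto (\text{starting position in } v' \text{ of the block } \varphi_k(w[n]))$ is precisely the $1$-promotion function $f$ of Definition~\ref{def:promofun}. I would first prove this: $f(n)$ equals the length of $\varphi_k(w[0..n))$, which amounts to showing $f(n) = n + (\text{number of zeros in } w[0..n))$, and then reconciling this with the digital-representation formula for $f$ via induction and Proposition~\ref{prop:cutoff}. Granting this, a position $p$ in $v'$ either lies at the start of some block $\varphi_k(w[n])$ (i.e. $p = f(n)$), or it is the second symbol of a block $\varphi_k(0) = 01$ (i.e. $p = f(n) + 1$ where $w[n] = 0$), in which case $v'[p] = 1$.

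Now I would check the three cases of Theorem~\ref{thm:repwdtrace} for $v'$ with respect to $\langle b_i\rangle$. For small $n < b_1 = k+1$: since $w$ starts $01\cdots(k-1)$, applying $\varphi_k$ gives $01 1 2 \cdots k = 0\,1\,1\,2\,\cdots$, wait — more carefully $\varphi_k(0)\varphi_k(1)\cdots\varphi_k(k-1) = (01)(2)(3)\cdots(k)$, which has length $k+1$ and equals $0,1,2,\ldots,k$, so $v'[n] = n$ there, as needed. For the "max digit" case $n = b_{j+1} - 1$: using $b_{j+1} = f(a_{j+1})$ and the fact that $w[a_{j+1}-1] = k = a_1$ (by Theorem~\ref{thm:repwdtrace} applied to $w$, since $j \geq 1$), together with $\varphi_k(k) = k+1 = b_1$, I would show $v'[b_{j+1}-1] = k+1$. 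The recursive case $v'[p] = v'[p - b_j]$ is the main obstacle: I must show that subtracting $b_j$ in the promoted indexing corresponds, under $f$, to subtracting $a_j$ in the original indexing, using that $f$ is "almost additive" with respect to these digit positions (this is where Proposition~\ref{prop:cutoff} and the structure of $1$-promotions in Definition~\ref{def:promo} do the work), and separately handle the case where $p$ is the "$1$" of a $\varphi_k(0)$ block — there $v'[p] = 1$, and I need the recursion $v'[p] = v'[p - b_j]$ or the small-$n$ clause to also give $1$, which follows because $\varphi_k$ always maps the zeros of $w$ to the two-symbol block $01$, so the "$1$-positions" of $v'$ are exactly the images of the $0$-positions of $w$ shifted by one, and these are preserved under the $b_j$-shift by the corresponding property of $w$. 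Carefully bookkeeping the index shift $p \leftrightarrow n$ through $f$ is the technical crux; once that dictionary is set up, each case of the characterization transfers directly from the corresponding case for $w$.
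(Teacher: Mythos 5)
Your proof is correct, but it takes a genuinely different route from the paper's. The paper also sets up the $1$-promotion function $f$ as the position dictionary (its auxiliary function $g$, sending each position of $\varphi_k\p{w}$ to the position of $w$ whose image contains it, is essentially a left inverse of your block-start map $n\mapsto f\p{n}$), but it then compares $u=\varphi_k\p{w}$ and $v$ directly, value by value: it shows by induction on the symbol $\ell\in\st{0,1,\ldots,k+1}$ that $u$ and $v$ have their $\ell$'s in the same positions, handling $\ell=1$ (and hence $\ell=0$) via the ``no preimage under $f$'' characterization, handling $1<\ell<k+1$ via the Fergusonian structure together with a chain of biconditionals through $2$-volatility to separate $\ell$ from $k+1$, and sweeping up all remaining positions as $k+1$'s. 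You instead verify that $\varphi_k\p{w}$ satisfies the recursive characterization of Theorem~\ref{thm:repwdtrace} with respect to $\seq{b_i}$ and invoke its uniqueness clause. The crux of your route, compatibility of $f$ with the $b_j$-shift, does go through: writing $f\p{n}=n+Z\p{n}$ with $Z\p{n}$ the number of zeroes in $w\left[0..n\right)$, the recursion $w\bk{m}=w\bk{m-a_j}$ for $a_j\le m<a_{j+1}-1$ gives $Z\p{n}=Z\p{a_j}+Z\p{n-a_j}$ whenever $\ind\p{n}=j$, hence $f\p{n}-f\p{n-a_j}=a_j+Z\p{a_j}=f\p{a_j}=b_j$, which is exactly what the recursive clause needs; the second symbols of the $01$ blocks ride along since $w\bk{n}=w\bk{n-a_j}=0$ at those positions. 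Your version buys a shorter, more structural argument that needs only $f$ and no induction on the alphabet or appeal to volatility biconditionals, at the price of having to establish $f\p{n}=n+Z\p{n}$ carefully from Definition~\ref{def:promofun} (the paper asserts the equivalent incremental statement, that $f\p{m}-f\p{m-1}$ is $1$ or $2$ according to whether $w\bk{m-1}$ is nonzero or zero, with only a one-line justification, so neither proof escapes that step).
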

\begin{proof}
Let $u=\varphi_k\p{w}$.  We must show that $u=v$.  Let $f$ be the $1$-promotion function of $\seq{a_i}$.  For each index $n$ in $u$, let $g\p{n}$ be the index in $w$ such that $u\bk{n}$ is part of the image of $w\bk{g\p{n}}$.  Also, for each index $n$ in $v$, let
\[
h\p{n}=\begin{cases}
m\text{ such that }f\p{j}=n & \text{if such an }m\text{ exists}\\
m\text{ such that }f\p{j}=n-1 & \text{otherwise}.
\end{cases}
\]
We make the following observations about $f$, $g$, and $h$.
\begin{enumerate}
\item\label{it:tpf} For positive integers $m$,
\[
f\p{m}=\begin{cases}
f\p{m-1}+1 & w\bk{m-1}>0\\
f\p{m-1}+2 & w\bk{m-1}=0.
\end{cases}
\]
This follows from the definition of $1$-promotion function and the fact that the zeroes in $w$ occur precisely in positions that are non-volatile zends in the $\seq{a_i}$-representation.
\item\label{it:tpg} For positive integers $n$,
\[
g\p{n}=\begin{cases}
g\p{n-1}+1 & u\bk{n-1}>0\\
g\p{n-1} & u\bk{n-1}=0.
\end{cases}
\]
This follows from the fact that $w\bk{g\p{n-1}}$ maps to a single symbol unless $w\bk{g\p{n-1}}=0$, in which case it maps to $01$.
\item\label{it:tph} For positive integers $n$,
\[
h\p{n}=\begin{cases}
h\p{n-1}+1 & v\bk{n-1}>0\\
h\p{n-1} & v\bk{n-1}=0.
\end{cases}
\]
If $v\bk{i-1}=0$, then the $\st{b_i}$-representation of $n$ ends in $1$.  Such a number has no preimage under $f$, so we see that $h\p{n}=h\p{n-1}$ in that case.  If $v\bk{n-1}>0$, then $n$ has a preimage under $f$, so $h\p{n}=h\p{n-1}+1$, as required.
\item\label{it:tpfg} For nonnegative integers $n$,
\[
f\p{g\p{n}}=\begin{cases}
n & u\bk{n}\neq1\\
n-1 & u\bk{n}=1.
\end{cases}
\]
We will prove this by induction on $n$.  If $n=0$, then $g\p{n}=0$, and $f\p{g\p{n}}=0$, as required.  Now, assume that this holds for smaller values of $n$.  If $u\bk{n}=1$, then
\begin{align*}
f\p{g\p{n}}&=f\p{g\p{n-1}}\\
&=n-1,
\end{align*}
as required.  If $u\bk{n-1}=1$, then
\begin{align*}
f\p{g\p{n}}&=f\p{g\p{n-1}+1}\\
&=f\p{g\p{n-1}}+1\\
&=n-2+2\\
&=n,
\end{align*}
(since $w\bk{g\p{n-1}}=0$) as required.  If $u\bk{n}\neq1$ and $u\bk{n-1}\neq1$, then
\begin{align*}
f\p{g\p{n}}&=f\p{g\p{n-1}+1}\\
&=f\p{g\p{n-1}}+1\\
&=n-1+1\\
&=n,
\end{align*}
(since $w\bk{g\p{n-1}}\neq0$) as required.
\item\label{it:tpfh} For nonnegative integers $n$,
\[
f\p{h\p{n}}=\begin{cases}
n & v\bk{n}\neq1\\
n-1 & v\bk{n}=1.
\end{cases}
\]
The same proof works here as in~\ref{it:tpfg}, except $u$ is replaced by $v$ and $g$ is replaced by $h$.
\end{enumerate}
We will now prove that $u=v$.  We will show that, for each value $\ell$ with $0\leq\ell\leq k+1$, $u$ and $v$ take that value in the same places.  We will go by induction on $\ell$.  We begin with the case $\ell=1$.  By~\ref{it:tpfg}, if $n$ has no preimage under $f$, $u\bk{n}=1$ (since $f\p{g\p{n}}$ cannot equal $n$).  Similarly, by~\ref{it:tpfh}, if $n$ has no preimage under $f$, $v\bk{n}=1$.  If, on the other hand, $n$ has a preimage under $f$, there will be some $n'$ such that $f\p{g\p{n'}}=n$, since~\ref{it:tpg} implies that $g$ is onto the nonnegative integers.  By~\ref{it:tpfg}, this implies that $n'=n$ or $n'=n+1$.  If $n'=n$, then $u\bk{n}\neq1$, since $f\p{g\p{n}}=n$.  If $n'=n+1$, then $u\bk{n'}=u\bk{n+1}=1$, so $u\bk{n}=0\neq1$.  Either way, $u\bk{n}\neq1$ if $n$ has a preimage under $f$.  Similarly, $v\bk{n}\neq1$ if $n$ has a preimage under $f$.  Therefore, $u\bk{n}=1$ if and only if $v\bk{n}=1$ if and only if $n$ has no preimage under $f$.  So, $u$ and $v$ have their ones in the same positions.  Notice that this immediately implies that they also have their zeroes in the same positions, since in both $u$ and $v$, zeroes and ones appear in consecutive pairs.

Now, assume that $1<\ell<k+1$, and assume that $u$ and $v$ have all occurences of values less than $\ell$ in the same positions.  We will now show that all occurrences of $\ell$ in $u$ and $v$ are in the same positions.  Notice that in both $u$ and $v$, all occurrences of $\ell$ immediately follow occurrences of $\ell-1$.  But, it is also possible for the value after $\ell-1$ to be $k+1$.  We will show that if $u\bk{n-1}=v\bk{n-1}=\ell-1$, then $u\bk{n}=k+1$ if and only if $v\bk{n}=k+1$.  (This will suffice to show that the $\ell$'s coincide.)  We will build a chain of biconditionals.  We have
\begin{align}
\label{eq:if1}&\hspace{0.25in}u\bk{n}=k+1\\
\label{eq:if2}&\iff w\bk{g\p{n}}=k\\
\label{eq:if3}&\iff g\p{n}\text{ $2$-volatile with respect to $\seq{a_i}$}\\
\label{eq:if4}&\iff f\p{g\p{n}}\text{ $2$-volatile with respect to $\seq{b_i}$}\\
\label{eq:if5}&\iff n\text{ $2$-volatile with respect to $\seq{b_i}$}\\
\label{eq:if6}&\iff v\bk{n}=k+1.
\end{align}
We see that $\p{\ref{eq:if1}}\iff\p{\ref{eq:if2}}$ holds by the definition of $g$.  Then, $\p{\ref{eq:if2}}\iff\p{\ref{eq:if3}}$ holds by the definition of a representation word.  Next, $\p{\ref{eq:if3}}\iff\p{\ref{eq:if4}}$ holds by the definition of $f$ and of $1$-promotion.  Then, $\p{\ref{eq:if4}}\iff\p{\ref{eq:if5}}$ holds by item~\ref{it:tpfg} above.  And last, $\p{\ref{eq:if5}}\iff\p{\ref{eq:if6}}$ holds by the definition of a representation word.

Finally, notice that all corresponding positions in $u$ and $v$ are either equal and less than $k+1$ or unassigned in both.  These are both words over $\Sigma_{k+1}$, so all remaining positions in both $u$ and $v$ must have the value $k+1$.  Therefore, $u=v$, as required.
\end{proof}
Next, we will construct correspondences between Nim sequences that are representation words with different values of $k$.  This will show that ternary representation words that are Nim sequences ($k=2$) give rise to Nim sequences for all larger values of $k$.
We have the following theorem.
\begin{theorem}\label{thm:teruniv}
Let $\seq{a_i}$ be a representing sequence with $a_1=k$, and let $w$ be the representation word of $\seq{a_i}$.  Let $f$ be the $1$-promotion function of $\seq{a_i}$.  If $w$ is the Nim sequence of a subtraction set $S$ that contains $\bk{k-1}$, then $\varphi_k\p{w}$ is the Nim sequence of the subtraction set $S'=f\p{S}\cup\st{1}$.
\end{theorem}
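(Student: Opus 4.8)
The plan is to work entirely with representation words and reduce everything to the fact that $\SG_S=w$. By Theorem~\ref{thm:tracepromo} the word $v:=\varphi_k(w)$ is the representation word of the $1$-promotion $\seq{b_i}$ of $\seq{a_i}$, and $b_1=k+1$. First I would check that $v$ is strongly Fergusonian as a word over $\Sigma_{k+1}$: this is a routine transfer of the strong‑Fergusonian property of $w$ (which holds since $w=\SG_S$ with $\bk{k-1}\subseteq S$) through the explicit morphism $\varphi_k$ ($0\mapsto 01$, $i\mapsto i+1$ for $i>0$), using Proposition~\ref{prop:trivrep}: every $0$ of $w$ is preceded by a $k$ or a $k-1$ and followed by a $1$, every positive $h<k$ is preceded by $h-1$, every $h<k-1$ is followed by $h+1$ or $k$, and no two $k$'s are adjacent; chasing these through $\varphi_k$ gives exactly the strong‑Fergusonian conditions for $v$. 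Next I would record the block structure of $v$: writing $f$ for the $1$-promotion function, $v$ splits into consecutive blocks indexed by $m\geq 0$, block $m$ occupying positions $\bk{f(m),f(m+1))$, of length one with value $w[m]+1$ if $w[m]>0$ and of length two with values $0,1$ if $w[m]=0$. Hence $v[n]=0$ iff $n=f(m)$ with $w[m]=0$; $v[n]=1$ iff $n=f(m)+1$ with $w[m]=0$; and $v[n]=c$ with $2\leq c\leq k+1$ iff $n=f(m)$ with $w[m]=c-1$. I would also note that $\st{1,2,\ldots,k}\subseteq S'$ (since $1\in S'$ and $f(i)=i+1$ for $1\leq i\leq k-1$, while $\bk{k-1}\subseteq S$), so $\min S'=1$ and the least positive integer missing from $S'$ is at least $k+1$; in particular Generalized Ferguson (Theorem~\ref{thm:genferguson}) applies to $S'$.

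The heart of the argument is a strong induction on $n$ proving $\SG_{S'}(n)=v[n]$, with $w=\SG_S$ used as a black box. A move at $n$ in the $S'$-game either subtracts $1$ or subtracts $f(s)$ for some $s\in S$. The crucial step is a description of where a move by $f(s)$ from a position in block $m$ lands and what $v$-value it meets: it lands in block $m-s$ or block $m-s-1$, with value governed by $w[m-s]=\SG_S(m-s)$. Granting this, the case $v[n]=0$ (so $n=f(m)$, $\SG_S(m)=0$) follows because the subtract-$1$ move lands on a value in $\st{k,k+1}$ (by the Fergusonian structure of $w$, $w[m-1]>0$) while every subtract-$f(s)$ move lands on a nonzero value since $\SG_S(m)=0$ forces $\SG_S(m-s)\neq 0$; the case $v[n]=1$ follows in the same way; the cases $2\leq c\leq k$ follow from Generalized Ferguson applied to $S'$ together with the strongly Fergusonian structure of $v$ and the inductive hypothesis at $n-1$; and the case $c=k+1$ (so $w[m]=k$, $m$ a volatile zend) is handled exactly as the $W$-case in the proof of Theorem~\ref{thm:absmain}: the moves by $1,2,\ldots,k$ (all in $S'$) reach the values $k,k-1,\ldots,1$ along the descending Fergusonian run of $w$ below $m$, some move by $f(s)$ reaches value $0$ because in the $S$-game there is a legal move from the value-$k$ position $m$ to a value-$0$ position, and no move reaches value $k+1$ because $\SG_S(m)=k$ forbids an $S$-move from $m$ to a value-$k$ position; hence the mex is $k+1$.

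The main obstacle is this crucial lemma about how $f$ interacts with subtraction. The map $f$ is \emph{not} additive — in general $f(m-s)+f(s)\neq f(m)$ — because the number of $v$-positions strictly between block $m-s$ and block $m$, namely $(m-s)+\#\st{m-s\leq q<m : w[q]=0}$, need not equal $f(s)=s+\#\st{0\leq q<s : w[q]=0}$; so one cannot simply push the $S$-game forward through $f$. The right approach is probably not to pin down the landing block exactly, but to prove directly the two facts actually needed: that $v\bigl[f(m)-f(s)\bigr]$ and $v\bigl[f(m)+1-f(s)\bigr]$ are never $0$ or $1$ when $\SG_S(m)=0$, and never $k+1$ when $\SG_S(m)=k$, arguing by contradiction via the representation-word characterization of which positions carry each value (Theorem~\ref{thm:repwdtrace}, Propositions~\ref{prop:cutoff} and~\ref{prop:vprez}) together with the fact that $\SG_S=w$ forbids the corresponding one-move transitions in the $S$-game. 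Combining that with the hypothesis $\bk{k-1}\subseteq S$ should close the induction and yield $\SG_{S'}=v=\varphi_k(w)$.
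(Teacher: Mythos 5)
Your outline correctly identifies the two structural ingredients (Theorem~\ref{thm:tracepromo} to recognize $\varphi_k\p{w}$ as the representation word of the $1$-promotion, and then a verification that it is the Nim sequence of $S'$), and, to your credit, you also correctly isolate the real difficulty: $f$ is not additive, so one cannot transport moves of the $S$-game to moves of the $S'$-game for free. But that is exactly the point at which the proposal stops being a proof. The entire last paragraph is written in the subjunctive (``the right approach is probably not to\dots'', ``should close the induction''): the crucial lemma --- that $v\bk{f\p{m}-f\p{s}}$ avoids the forbidden values --- is never established, only located. Everything in your case analysis (the $v\bk{n}=0$ case, the $v\bk{n}=1$ case, and the $c=k+1$ case, where you need both that some $f\p{s}$-move reaches a $0$ and that no move reaches a $k+1$) funnels through this one unproven statement, so the induction does not close.

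For comparison, the paper closes this gap with a concrete computation rather than a contradiction argument. From the recursion $f\p{m}=f\p{m-1}+1$ if $w\bk{m-1}>0$ and $f\p{m}=f\p{m-1}+2$ if $w\bk{m-1}=0$ (observation~\ref{it:tpf} in the proof of Theorem~\ref{thm:tracepromo}), one gets $f\p{n}=n+\#\st{q<n\mid w\bk{q}=0}$, so $f\p{n}-f\p{s}$ and $f\p{n-s}$ differ only in a count of zeroes of $w$ over two different windows. Fixing $s\in S$ and letting $n_m$ run over the zeroes of $w$ after position $s$, the paper proves by induction on $m$ that $f\p{n_m}-f\p{s}$ and $f\p{n_m-s}$ differ by exactly one, using that $w\bk{n_m-s}\neq0$ (because $w=\SG_S$) to control the zero count in the shifted window; this yields the \emph{stronger} identity $v\bk{f\p{n}-f\p{s}}=w\bk{n-s}$ for $n\in N$, which is what you would need in every case of your mex induction at once. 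The paper also avoids redoing that induction entirely: it first uses Proposition~\ref{prop:scontt} to assume $T\subseteq S$, and then invokes Theorem~\ref{thm:absmain}, which reduces the whole claim to the single set-theoretic condition $\p{N'+S'}\cap N'=\emptyset$ with $N'=f\p{N}$. Your route of re-running the mex argument by hand is workable in principle, but it multiplies the number of places where the missing lemma is needed; either way, the lemma itself is the content of the theorem, and it is absent from the proposal.
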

\begin{proof}
Let $\seq{b_i}$ be the $1$-promotion of $\seq{a_i}$, and let $v=\varphi_k\p{w}$.  By Theorem~\ref{thm:tracepromo}, $v$ is the representation word of $\st{b_i}$.
As the Nim sequence of a subtraction game, $w$ is strongly Fergusonian, and $v$ is also strongly Fergusonian by construction.  Let $u$ denote the Nim sequence of $S'$.  We wish to show that $u=v$.

By Proposition~\ref{prop:scontt}, we can assume with loss of generality that $T\subseteq S$.  Let $T'=f\p{T}\cup\st{1}$.  We have that $T'\subseteq S'$.  By Theorem~\ref{thm:tracepromo}, $T'=\st{b_i-1\mid i\geq2}\cup\bk{k}$.  Let $N'=f\p{N}$, so that $N'$ is the set of non-volatile zends in the $\seq{b_i}$-representation.  By Theorem~\ref{thm:absmain}, to prove that $u=v$, it will suffice to show that $\p{N'+S'}\cap N'=\emptyset$, which is equivalent to showing that $\p{N'-S'}\cap N'=\emptyset$.


First, notice that $\p{N'-1}\cap N'=\emptyset$, since one less than a non-volatile zend is a volatile number (or a negative number).
We know that $v\bk{f\p{n}}=0$ if and only if $w\bk{n}=0$, and this happens if and only if $i\in N$.  We also know that $N'=f\p{N}$ and that $S'=f\p{S}\cup\st{1}$.  So, to prove that $\p{N'-S'}\cap N'=\emptyset$, it will suffice to show that for each $n\in N$, for all $s\in S$ with $s<n$, $v\bk{f\p{n}-f\p{s}}\neq 0$.
We will actually prove the stronger statement that for all $s\in S$, $w\bk{n-s}=v\bk{f\p{n}-f\p{s}}$.  Since $w$ is the Nim sequence of $S$, $w\bk{n-s}\neq 0$.  So, this statment is indeed stronger than the desired one.

We now note that all zends in $S$ are volatile (and hence $2$-volatile by Proposition~\ref{prop:zv2}), since any other zend $t$ would have $w\bk{t}=0$.  A legal move would be to subtract $t$, but $w\bk{0}=0$.  Now, fix $s\in S$.  We will prove that $w\bk{n-s}=v\bk{f\p{n}-f\p{s}}$ for all $n\in N$ with $n\geq s$ by demonstrating a combinatorial property of the function $f$.  Let $z$ be the number of zeroes in $w$ before position $s$.  Let $n_m$ be the $\p{m+1}^{st}$ zero in $w$ after position $s$.  We will show that for all $m\geq0$, $f\p{n_m}-f\p{s}=f\p{n_m-s}+1$.  Since $w\bk{n_m-s}\neq0$, we would then have that $v\bk{f\p{n_m-s}}=w\bk{n_m-s}+1$.  So, we would have $v\bk{f\p{n_m}-f\p{s}}=w\bk{n_m-s}$, as required.  Notice that $f\p{s}=s+z$ and $f\p{n_m}=n_m+z+m$ as a consequence of Theorem~\ref{thm:tracepromo}.  The proof of our claim will be by induction on $m$ for fixed $s$.

Continue to fix $s\in S$, and let $m=0$.  Since $0<n_0-s<k$, we have that $f\p{n_0-s}=n_0-s+1$.  So, we see that
\[
f\p{n_0}-f\p{s}=n_0+z-s-z=n_0-s=f\p{n_0-s}-1.
\]
Hence, our claim holds for $m=0$.

Now, assume that $f\p{n_{m-1}}-f\p{s}=f\p{n_{m-1}-s}+1$.  We have $n_m=i_{m-1}+k+c$, where $c\in\st{0,1}$.  We then have that $n_m-s=n_{m-1}-s+k+c$.  Notice that, since $w\bk{n_m-s}\neq0$, there must be exactly $m+1$ zeroes preceding position $n_m-s$ in $w$.  Hence, $f\p{n_m-s}=n_m-s+m+1=n_{m-1}-s+k+c$.  So, we see that
\[
f\p{n_m}-f\p{s}=n_m+z+m-s-z=n_m-s+m=f\p{n_m-s}-1.
\]
Hence, our claim holds for $m$, as required.
\end{proof}
It is unclear how strong of a converse Theorem~\ref{thm:teruniv} has.  We know that, while applying an inverse map to $\varphi_k$, one does obtain a representation word, that representation word need not be a Nim sequence.  For example, the representing sequence $\p{1,3,10,14,18}$ would get \quot{un-promoted} to $\p{1,2,7,10,13}$.  The representation word of the former is (the period of) a Nim sequence; the representation word of the latter is not.  It is open to characterize when we can reduce the alphabet size while preserving the fact that we have a Nim sequence.

All of the work of this appendix has been leading up to the following extension of Theorem~\ref{thm:main}.
\begin{theorem}\label{thm:mainext}
Let $k\geq2$, and let $\seq{a_{i,k}}$ be the representing sequence where $a_{0,k}=1$, $a_{1,k}=k$, and $a_{i,k}=3a_{i-1,k}-a_{i-2,k}$ for all $i\geq2$.  Let $w_k$ be the representation word of $\seq{a_{i,k}}$.  There exists a subtraction set $S$ such that $w_k$ is the Nim sequence of $S$.  In particular, there exist aperiodic, bounded Nim sequences with maximum Sprague-Grundy value equal to $k$ for all $k\geq2$.
\end{theorem}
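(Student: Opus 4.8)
The plan is to argue by induction on $k$, producing a subtraction set for $w_{k+1}$ from one for $w_k$ by applying the alphabet-extending morphism $\varphi_k$ of Definition~\ref{def:promorph}. The inductive hypothesis I would carry at stage $k$ is: $w_k$ is the representation word of $\seq{a_{i,k}}$, it is aperiodic, it takes exactly the values $0,1,\ldots,k$, and it is the Nim sequence of some subtraction set $S_k$ with $\bk{k-1}\subseteq S_k$. The base case $k=2$ is Theorem~\ref{thm:main} together with the construction of Section~\ref{ss:ternary}: there $w_2$ is identified with the representation word of $\seq{a_{i,2}}$, shown aperiodic (Lemma~\ref{lem:aperdoub}) and strongly Fergusonian (Theorem~\ref{thm:fergtracegen})---so it is $\st{0,1,2}$-valued, the value $2$ occurring at $2$-volatile positions such as $a_{2,2}-1$---and shown to be the Nim sequence of $T\p{\seq{a_{i,2}}}$, which contains $1=a_{1,2}-1$ and hence contains $\bk{1}$.

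For the inductive step, assume the hypothesis for $k$ and let $f_k$ and $\seq{b_i}$ be the $1$-promotion function and $1$-promotion of $\seq{a_{i,k}}$. By Theorem~\ref{thm:tracepromo}, $\varphi_k\p{w_k}$ is the representation word of $\seq{b_i}$; and because $\bk{k-1}\subseteq S_k$, Theorem~\ref{thm:teruniv} gives that $\varphi_k\p{w_k}$ is the Nim sequence of $S_{k+1}:=f_k\p{S_k}\cup\st{1}$. To restore the hypothesis I would verify three things. First, $\bk{k}\subseteq S_{k+1}$: the first $k$ symbols of $w_k$ are $0,1,\ldots,k-1$ by condition~(\ref{i:rep2}) of Theorem~\ref{thm:repwdtrace}, so observation~\ref{it:tpf} in the proof of Theorem~\ref{thm:tracepromo} gives $f_k\p{0}=0$ and $f_k\p{i}=i+1$ for $1\leq i\leq k-1$, whence $S_{k+1}\supseteq\st{1}\cup f_k\p{\bk{k-1}}=\bk{k}$. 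Second, $\varphi_k\p{w_k}$ is aperiodic and $\st{0,\ldots,k+1}$-valued with $k+1$ attained: the code of $\varphi_k$ is non-erasing and uniquely decodable (a $0$ occurs only at the start of $\varphi_k\p{0}$, a $1$ only right after a $0$), so eventual periodicity of $\varphi_k\p{w_k}$ would descend to $w_k$; and $\varphi_k$ carries the value $k$---attained by $w_k$ at any $2$-volatile position---to $k+1$, while $\varphi_k\p{w_k}$, being a representation word over $\Sigma_{k+1}$, never exceeds $k+1$. Third, $\varphi_k\p{w_k}=w_{k+1}$, i.e.\ that $\seq{b_i}$ is a representation sequence of $w_{k+1}$.

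This third point is where the defining recurrence of $\seq{a_{i,k}}$ really enters, and it is the step that needs genuine bookkeeping; I would handle it through the morphism picture of Section~\ref{ss:ternary}. Mimicking Proposition~\ref{prop:hd0l}, one shows $w_k=\psi_k\p{\varphi^\omega\p{0}}$ with $\varphi\p{0}=001,\ \varphi\p{1}=01$ and $\psi_k\p{0}=01\cdots\p{k-1},\ \psi_k\p{1}=k$: the blocks $w_i^{(k)}=\p{w_{i-1}^{(k)}}^{2}\p{\prod_{j=1}^{i-2}w_{i-1-j}^{(k)}}k$, $w_1^{(k)}=01\cdots\p{k-1}$, satisfy $w_i^{(k)}=\psi_k\p{\varphi^{i-1}\p{0}}$ by Lemma~\ref{lem:phione}, and their lengths give, via Theorem~\ref{thm:fergtracegen}, precisely the representation sequence $\seq{a_{i,k}}$ with $a_{i,k}=\ab{w_i^{(k)}}$; since $\varphi_k\circ\psi_k=\psi_{k+1}$ one then gets $\varphi_k\p{w_k}=\psi_{k+1}\p{\varphi^\omega\p{0}}=w_{k+1}$, which closes the induction. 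Running this for all $k$ produces the subtraction sets $S_2,S_3,\ldots$ and, with the aperiodicity and value range carried along, proves both assertions of the theorem. The main obstacle is exactly this last identification---checking that iterating $1$-promotion out of $\seq{a_{i,2}}$ reproduces the representing sequence $\seq{a_{i,k}}$ in the statement; the rest is a direct appeal to Theorems~\ref{thm:main}, \ref{thm:tracepromo}, and~\ref{thm:teruniv} plus routine checks.
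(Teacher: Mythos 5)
Your strategy is essentially the paper's: both arguments induct on $k$, pass from $w_k$ to $w_{k+1}$ by applying $\varphi_k$, and invoke Theorems~\ref{thm:tracepromo} and~\ref{thm:teruniv} to transport the Nim-sequence property along the $1$-promotion. The one genuine difference is how the promoted representing sequence gets identified with $\seq{a_{i,k+1}}$: the paper computes the promotion recurrence directly from the shape $21^{i-1}0$ of the $\seq{a_{i,k}}$-representation of $a_{i,k}-1$, whereas you factor $w_k=\psi_k\p{\varphi^\omega\p{0}}$ and use the identity $\varphi_k\circ\psi_k=\psi_{k+1}$, which does hold and which pleasantly sidesteps the digit-level bookkeeping.

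There is, however, a concrete arithmetic failure at exactly the step you yourself single out as the main obstacle, and it is worth flagging because the paper's own proof stumbles at the same place. With $w_1^{(k)}=01\cdots\p{k-1}$ one has $\ab{w_2^{(k)}}=2\ab{w_1^{(k)}}+1=2k+1$, while the theorem's sequence has $a_{2,k}=3a_{1,k}-a_{0,k}=3k-1$; these agree only for $k=2$. Equivalently, the $1$-promotion of $\seq{a_{i,2}}=\p{1,2,5,13,\ldots}$ is $\p{1,3,7,18,\ldots}$ rather than $\p{1,3,8,21,\ldots}$: for $i=2$ Definition~\ref{def:promo} gives $b_2=1+2b_1=2k+1$, whereas the paper asserts $b_2=3b_1-b_0$ (the passage from recurrence~\ref{rec:5} to recurrence~\ref{rec:2} needs the summation form at two consecutive indices, and it is unavailable at $i=1$ since $b_1=k\neq2b_0$). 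So $\varphi_k\p{w_k}$ is the representation word of $\p{1,k+1,2k+3,\ldots}$, not of $\seq{a_{i,k+1}}$ as stated; indeed the representation word of $\p{1,3,8,\ldots}$ begins $01201203$, which is not even Fergusonian and so cannot be handled by this machinery at all. The theorem survives if $a_{2,k}$ is replaced by $2k+1$ and the recurrence is imposed only for $i\geq3$ (equivalently, if $\seq{a_{i,k}}$ is defined as the iterated $1$-promotion of $\seq{a_{i,2}}$); with that repair your argument and the paper's both go through, and the rest of your outline (the containment $\bk{k}\subseteq S_{k+1}$, aperiodicity descending through the uniquely decodable code $\varphi_k$, and the value $k+1$ being attained) is fine.
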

\begin{proof}
The proof will be by induction on $k$.  The case $k=2$ follows from Theorem~\ref{thm:main}.  Now, assume the statement is true for $k-1$.  Furthermore, assume that the $\seq{a_{i,k-1}}$-representation of $a_{i,k-1}-1$ is $21^{i-1}0$ for all $i\geq1$ (the base case being covered by Lemma~\ref{lem:oddfiba}).  To show the truth of the main statement for $k$, Theorem~\ref{thm:teruniv} implies that it will suffice to show that the $1$-promotion of $\st{a_{i,k-1}}_{i\geq0}$ is $\st{a_{i,k}}_{i\geq0}$.  Let $\seq{b_i}$ be the $1$-promotion of $\st{a_{i,k-1}}_{i\geq0}$. We will prove that $a_{i,k}=b_i$ by induction on $i$.  If $i=0$, then we have $b_0=a_{0,k}=1$, as required.  If $i=1$, then we have $b_1=1+\p{\p{k-1}-1+1}\cdot1=k$, as required.  Now, assume that $b_{i-1}=a_{i-1,k}$ for some $i\geq2$.  Since the $\seq{a_{i,k-1}}$-representation of $a_{i,k-1}-1$ is $21^{i-1}0$, we have
\[
b_i=1+\sum_{\ell=1}^{i-2}b_\ell+2b_{i-1}.
\]
Notice that this is reminiscent of recurrence~\ref{rec:5} in Lemma~\ref{lem:fibrec}.  Recurrence~\ref{rec:2} can be derived from recurrence~\ref{rec:5} regardless of initial conditions, so it follows that $b_i=3b_{i-1}-b_{i-2}$.  In other words, $b_i=a_{i,k}$, as required.

The fact that the $\seq{a_{i,k}}$-representation of $\seq{a_{i,k}-1}$ is $21^{i-1}0$ for all $i\geq1$ follows from subtracting $1$ from the fact that
\[
a_{i,k}=1+\sum_{\ell=1}^{i-2}a_{\ell,k}+2a_{i-1,k}.
\]
\end{proof}
\begin{bibdiv}
\begin{biblist}
\bib{bal}{article}
{

 author={Bal{\'a}{\v{z}}i, P.},
  title={Various Properties of Sturmian Words},
  journal={Acta Polytechnica},
  volume={45},
  number={5},
  year={2005}
}
\bib{bcg}{book}
{
 author = {Berlekamp, E. R.},
 author = {Conway, J. H.},
 author = {Guy, R. K.},
 title = {Winning Ways for Your Mathematical Plays},
 volume = {1}
 publisher = {A. K. Peters},
 year = {2001}
}
\bib{BeSe}{book}
{
 author = {Berstel, J.}
 author = {S\'e\'ebold, P.}, 
 title = {Mathematical Foundations of Computer Science}, 
 subtitle = {A characterization of Sturmian morphisms},
 publisher = {Springer Berlin / Heidelberg},
 year = {1993}, 
 pages = {281-290}
}
\bib{cass}{article}
{
 author={Cassaigne, J.},
 author={Duch{\^e}ne, E.},
 author={Rigo, M.},
  title={Invariant games and non-homogeneous Beatty sequences},
  journal={arXiv preprint arXiv:1312.2233},
  year={2013}
}
\bib{drmota}{article}
{
 title={The parity of the sum-of-digits-function of generalized zeckendorf representations},
  author={Drmota, M.}
  author ={Gajdosik, J.},
  journal={Fibonacci Quarterly},
  volume={36},
  pages={3--19},
  year={1998}
}
\bib{fenwick}{article}
{
 author = {Fenwick, P.},
 title = {Zeckendorf Integer Arithmetic}
 journal={Fibonacci Quarterly},
 volume={41},
 number={5},
 pages={405--413},
 year={2003}
}
\bib{fraenkel}{article}
{
 author = {Fraenkel, A. S.},
 title = {Aperiodic Subtraction Games}
 journal = {The Electronic Journal of Combinatorics}
 volume = {18}
 number = {2}
 pages={P19},
 year = {2011}
}
\bib{oeis201}{misc}
{
 title = {OEIS Foundation Inc.},
 year = {2014}
 publisher = {The On-Line Encyclopedia of Integer Sequences}
 note = {http://oeis.org/A000201}
}
\bib{oeis1950}{misc}
{
 title = {OEIS Foundation Inc.},
 year = {2014}
 publisher = {The On-Line Encyclopedia of Integer Sequences}
 note = {http://oeis.org/A001950}
}
\bib{oeis89910}{misc}
{
 title = {OEIS Foundation Inc.},
 year = {2014}
 publisher = {The On-Line Encyclopedia of Integer Sequences}
 note = {http://oeis.org/A089910}
}
\bib{rich}{article}
{
  author={Richomme, G.},
  title={Conjugacy of morphisms and Lyndon decomposition of standard Sturmian words},
  journal={Theoretical computer science},
  volume={380},
  number={3},
  pages={393--400},
  year={2007}
}
\bib{rs}{article}
{
 author = {Russo, V.},
 author = {Schwiebert, L.},
 title = {Beatty Sequences, Fibonacci Numbers, and the Golden Ratio},
 journal={Fibonacci Quarterly},
 volume={49},
 number={2},
 year={2011}
}
\bib{zeck}{article}
{
  title={Repr{\'e}sentation des nombres naturels par une somme de nombres de Fibonacci ou de nombres de Lucas},
  author={Zeckendorf, E.},
  journal={Bull. Soc. Roy. Sci. Liege},
  volume={41},
  pages={179--182},
  year={1972}
}
\end{biblist}
\end{bibdiv}
\end{document}